\newtheorem{theorem}{Theorem}
\numberwithin{theorem}{section}
\newtheorem{lemma}[theorem]{Lemma}
\newtheorem{prop}[theorem]{Proposition}
\theoremstyle{remark}
\newcommand{\Z}{\mathbb{Z}}
\newcommand{\R}{\mathbb{R}}
\newcommand{\C}{\mathbb{C}}
\newcommand{\G}{\mathcal{G}}
\renewcommand{\H}{\mathcal{H}}
\newcommand{\T}{\mathbb{T}}
\newcommand{\eps}{\varepsilon}
\newcommand{\be}{\begin{equation}}
\newcommand{\ee}{\end{equation}}
\newcommand{\old}[1]{}
\newcommand{\ST}{\nu}
\newcommand{\ent}{{\mathcal S}}
\newcommand{\om}{|\omega|}
\begin{document}
\title{Determinantal spanning forests on planar graphs}
\author{Richard Kenyon\thanks{Research supported by NSF Grants DMS-1208191, DMS-1612668 and Simons Investigator grant 327929}\\ Brown University\\ Providence, RI 02912, USA }
\date{}
\maketitle
\begin{abstract}
We generalize the uniform spanning tree 
to construct a family of determinantal measures on essential spanning forests on periodic planar graphs 
in which every component tree is bi-infinite.
Like the uniform spanning tree, these measures arise naturally from the laplacian on the graph. 

More generally these results hold for the ``massive" laplacian determinant which counts rooted spanning forests with weight $M$ per finite component. These measures typically have a form of conformal invariance, unlike the usual rooted spanning tree measure.
We show that the spectral curve for these models is always a simple Harnack curve; this fact controls the decay of edge-edge correlations
in these models.

We compute a limit shape theory in these settings, where the limit shapes are defined by measured foliations
of fixed isotopy type. 
\end{abstract}

\tableofcontents

\section{Introduction}

The relation between spanning trees and the laplacian on a graph was first discovered by Kirchhoff more than 
150 years ago \cite{Kirch}.
In the past 30 years
this relation has played an essential role in the development of a large part of probability theory and statistical mechanics \cite{Temperley, Aldous, Broder, 
Pemantle, BP, Wilson, Kenyon.confinv, Schramm, LSW, BLPS, Kenyon.bundle}. We define here a very natural generalization,
for periodic planar graphs, of Kirchhoff's results and of the uniform spanning tree
measure, to a $2$-parameter family of measures on spanning forests. These measures enjoy most of the properties of the uniform spanning tree, being determinantal, and in fact also arise from the laplacian determinant.
Taken together as a family we find additional behavior such as phase transitions and limit shapes.
\medskip

Figure \ref{esf1} shows part of a uniform random spanning tree of an infinite square grid graph on a strip of width $4$.
Such measures were constructed by Pemantle \cite{Pemantle} as limits of measures on finite graphs.

\begin{figure}[htbp]
\begin{center}\includegraphics[width=3in]{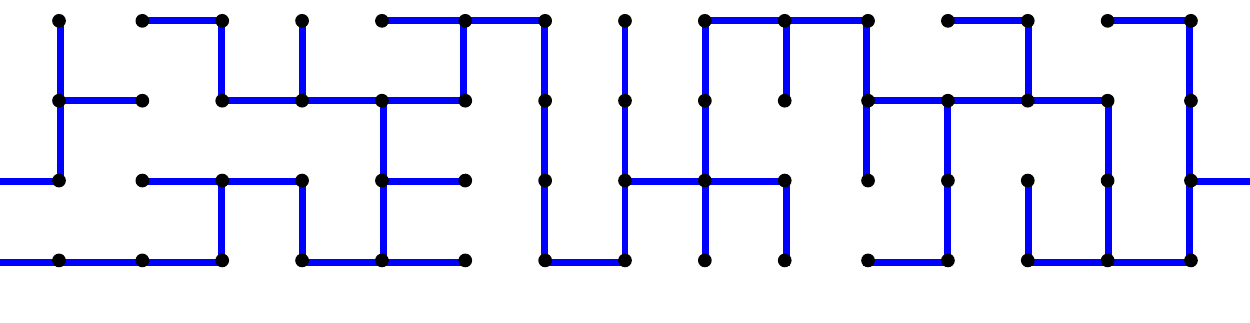}\end{center}
\vskip-.9cm
\caption{\label{esf1}Part of a uniform spanning tree on a strip of width $4$.}
\end{figure}
Figure \ref{esf2} shows random samples from three other measures on the same graph; 
these are measures on \emph{essential spanning forests} (ESFs);
an essential spanning forest is a spanning forest each of whose components
is an infinite tree. These measures are the \emph{locally uniform} measures on ESFs
with $j$ components, for $j=2,3,4$. 

\begin{figure}[htbp]
\begin{center}\includegraphics[width=3in]{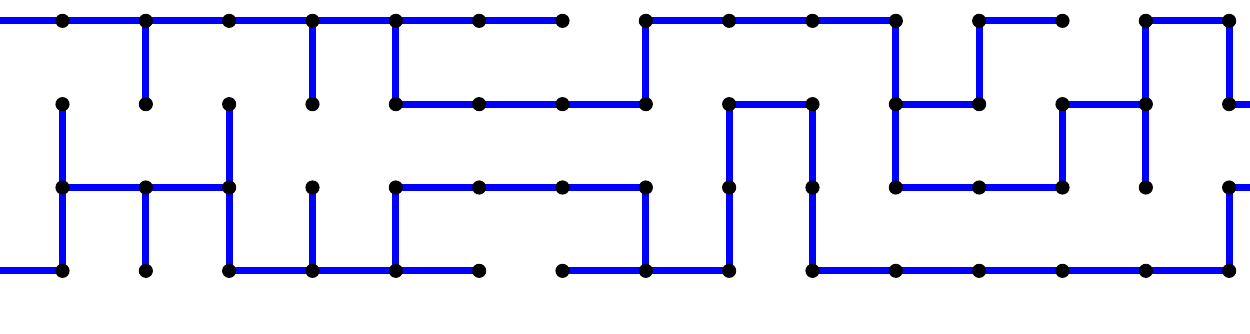}\end{center}
\vskip.5cm
\begin{center}\includegraphics[width=3in]{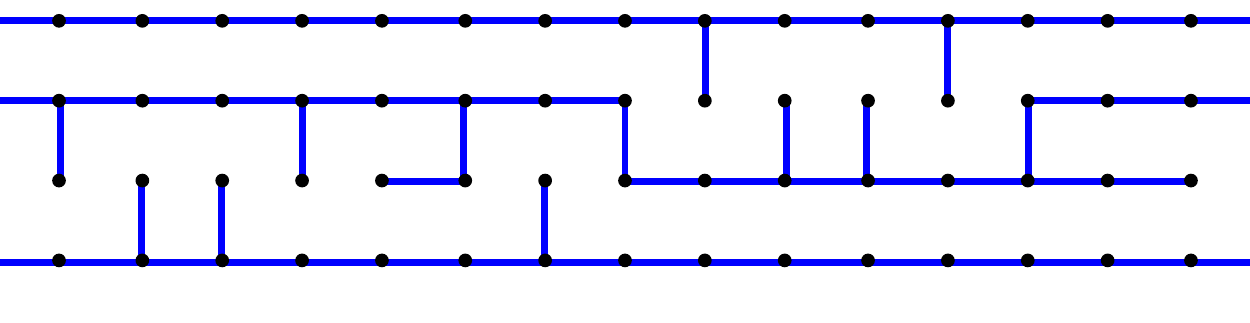}\end{center}\vskip.5cm
\begin{center}\includegraphics[width=3in]{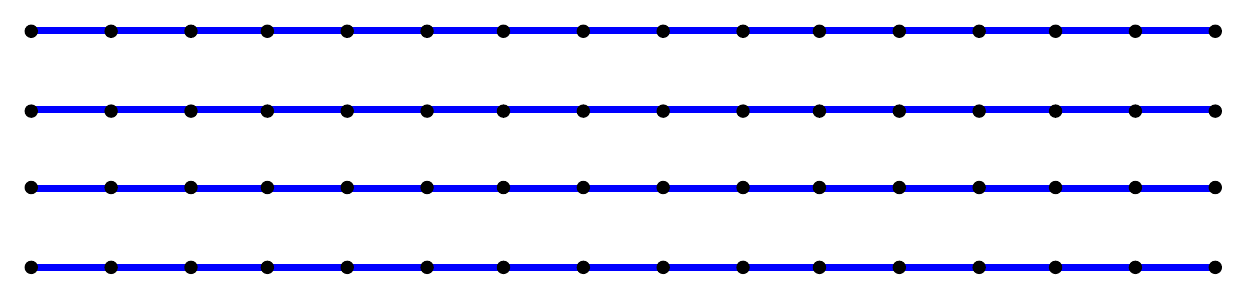}\end{center}
\caption{\label{esf2}Samples of uniform essential spanning forests with $2,3$ and $4$ components.}
\end{figure}
More generally, let $\G$ be a 
planar graph embedded in $\R^2$, invariant under translations in $\Z$ acting by $(x,y)\mapsto(x+n,y)$, 
and with finite quotient $\G_1=\G/\Z$. We call such a graph a \emph{strip graph}.
Let $c$ be a positive function on its edges, the \emph{conductance}, which is also invariant under translations in $\Z$.
We show that there is a unique translation-invariant Gibbs measure $\mu_j$ on ESFs of $\G$ with $j$ components, for all $1\le j\le m$, 
where $m$ is the width of $\G$.
(Here by the term \emph{width} we mean the maximal number of pairwise vertex-disjoint, bi-infinite paths in $\G$). 

We further show that $\mu_j$ is a \emph{determinantal} measure for the edges, that is,
the probability of a given finite set of edges occurring is the determinant of a minor of a certain infinite matrix $K_j$ (the \emph{kernel} of the determinantal measure). 
\begin{theorem}\label{1dthm}
Let $\G$ be a strip graph of width $m$ and conductance function $c$. For each integer $j\in[1,m]$ there is a unique translation-invariant
Gibbs measure $\mu_j$
on essential spanning forests of $\G$ with $j$ components. The measure $\mu_j$ is determinantal for the edges.
The growth rate $a_j$ (or free energy) of $\mu_j$ is
$a_j = \log|C_m| + \sum_{i=j+1}^{m} \log\lambda_i$
where $1<\lambda_2<\dots<\lambda_m$ are the roots larger than $1$ of
the polynomial $P(z) =\det\Delta(z),$
and $C_m$ is its leading coefficient.
\end{theorem}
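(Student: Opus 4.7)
My strategy is a transfer-matrix analysis on finite periodic approximations $\G_N=\G/N\Z$ combined with Fourier decomposition of the translation-invariant laplacian, followed by passage to the limit $N\to\infty$. Let $\Delta(z)$ denote the Fourier transform of $\Delta$ at ``frequency'' $z\in\C^*$ along the $\Z$-direction; this is an $m_1\times m_1$ matrix of Laurent polynomials (where $m_1=|\G_1|$), and $P(z)=\det\Delta(z)$. Since $\Delta$ is self-adjoint, $P$ is palindromic, so its roots pair as $\{\lambda,\lambda^{-1}\}$; the constant function in $\ker\Delta$ produces the root $\lambda_1=1$. As a preliminary algebraic--topological step I would verify that $P$ has exactly $m$ roots (with multiplicity, counting those on the unit circle with weight one half) in $|z|\ge 1$, so that the labelling $1=\lambda_1<\lambda_2<\dots<\lambda_m$ exhausts the outer roots and the integer $m$ indeed coincides with the width of $\G$.

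\textbf{Finite-volume partition functions and free energy.} By the matrix-tree theorem on $\G_N$, the spanning-tree partition function factors over characters as $\prod_{\omega^N=1,\ \omega\neq 1}P(\omega)$ up to an explicit prefactor. To reach the $j$-component sector I would interpret an ESF of $\G$ with $j$ components as a configuration on $\G_N$ in which $j-1$ transverse edges have been excised, cutting the cylinder into a strip that supports exactly $j$ bi-infinite components. Algebraically this corresponds to removing the $j-1$ transfer-matrix modes with $|\lambda|$ closest to $1$, so that the appropriate partition function is $\prod_{\omega\neq 1}P(\omega)$ divided by the contributions of the modes $\lambda_2,\dots,\lambda_j$. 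Applying Jensen's formula
\[
\frac{1}{2\pi}\int_0^{2\pi}\log|P(e^{i\theta})|\,d\theta=\log|C_m|+\sum_{i=1}^{m}\log\lambda_i,
\]
and subtracting the $j-1$ suppressed modes, yields $a_j=\log|C_m|+\sum_{i=j+1}^{m}\log\lambda_i$.

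\textbf{Determinantal structure, existence, and uniqueness.} The uniform measure on $j$-component ESFs of $\G_N$ in the appropriate homological sector is determinantal, being the push-forward to edge variables of a matrix-tree measure whose edge marginals are minors of the Green's function of the modified laplacian. Transfer-matrix diagonalization gives an explicit kernel $K_j^{(N)}$ as a sum over $N$th roots of unity with the $j-1$ suppressed modes deleted; as $N\to\infty$ this Riemann sum converges to a translation-invariant integral kernel $K_j$, producing a translation-invariant determinantal Gibbs measure $\mu_j$. For uniqueness, any translation-invariant Gibbs measure on $j$-component ESFs decomposes as a mixture of ergodic measures; on a one-dimensional strip the ergodic component is determined solely by the integer $j$ (no additional slope or winding parameter exists), so the mixture is forced to be a point mass. \textbf{Main obstacle.} The delicate step is correctly identifying the finite-volume combinatorial model for ``$j$-component ESF'' and proving that its partition function cancels exactly the modes $\lambda_1,\dots,\lambda_j$: this matching between the topological component count on the infinite strip and the suppressed transfer-matrix modes is the combinatorial heart of the argument. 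Once it is established, the free-energy formula, the construction of $K_j$, and the Gibbs property of the limit follow by standard manipulations.
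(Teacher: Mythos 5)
Your plan is in the right family — finite approximations $\G_N$, spectral decomposition, Jensen/Mahler-type evaluation, pass to the limit — but it has a genuine gap at exactly the place you flag as the ``main obstacle,'' and the mechanism you propose for closing it does not work. The obstruction is this: the product $\prod_{\omega^N=1,\,\omega\ne1}P(\omega)$ is the spanning-\emph{tree} partition function of $\G_N$ (i.e.\ the $j=1$ sector). To access $j$-component essential spanning forests you suggest ``excising $j-1$ transverse edges'' and correspondingly ``removing the $j-1$ transfer-matrix modes with $|\lambda|$ closest to $1$.'' But there is no determinant identity that equates a partition function over spanning forests with $j$ components to a spectral product with modes deleted; excising edges changes the graph, not the spectrum in a mode-by-mode fashion, and a $j$-component ESF is not a spanning tree of any edge-deleted subgraph in a canonical way. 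The arithmetic happens to come out right ($a_1-a_j=\sum_{i=2}^{j}\log\lambda_i$), but the combinatorial bridge is missing, and this bridge is precisely the content of the theorem, not a ``delicate step'' to be filled in later.

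What the paper does instead is use the bundle laplacian / CRSF expansion: on $\G_n$, one evaluates the flat-connection laplacian $\Delta(z)$ with monodromy $z$ around the cylinder and obtains the exact identity $\det\Delta_n(z)=\sum_{j=1}^{m}N_j(2-z-z^{-1})^{j}$, where $N_j$ is the weighted count of CRSFs of $\G_n$ with exactly $j$ noncontractible cycles. This packages the component count into the exponent of $(2-z-z^{-1})$. Combined with $P_n(z)=\prod_{\zeta^n=z}P_1(\zeta)=(C_m)^n z^{-m}\prod_{j}(z-\lambda_j^n)$ and the choice $z=-u^n$ with $\lambda_j<u<\lambda_{j+1}$, the $j$-th term of the sum dominates, and one reads off $\log N_j/n\to\log|C_m|+\sum_{i>j}\log\lambda_i$. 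No ad hoc mode deletion is needed; the dependence on $j$ comes out of a single analytic family $P_n(z)$ by choosing the modulus of $z$. Your contour choice $|z|=1$ (Jensen) recovers only the $j=1$ free energy; the $j$-dependence is obtained by moving the radius of the integration circle (equivalently $|z|$) past $\lambda_j^n,\dots,\lambda_{j+1}^n$, which is also exactly how the determinantal kernel $K^{(j)}$ is obtained as a different Laurent expansion of the same rational matrix $K(z)$.

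Two further points. First, the determinantal structure of $\mu_j$ is asserted rather than derived in your outline; the paper derives it from a block-determinant identity applied to $K(z)=Cd(z)\Delta(z)^{-1}d(1/z)^*$ and Lemma~\ref{singleton}, then takes the $N\to\infty$ limit of the resulting kernel (a Riemann sum over $N$-th roots of $z$) to a contour integral over $|u|=r$ with $\lambda_j<r<\lambda_{j+1}$. Second, your uniqueness argument (``the ergodic component is determined solely by $j$, so the mixture is a point mass'') is not a proof: one must actually rule out distinct ergodic Gibbs measures with $j$ components. The paper's Proposition~\ref{uniqueness} does this by a coupling argument exploiting that every local configuration has positive $\mu_j$-probability and that a fundamental-domain strip disconnects $\G$; you would need some such argument.
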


Here $\Delta(z)$ is the action of $\Delta$ on the space of $z$-periodic functions; see below for the precise definition.
The \emph{growth rate} $a_j$ of $\mu_j$ is defined to be 
the exponential growth rate of the weighted sum of configurations one sees 
in a window of length $n$ as a function of $n$. 

The kernels of the determinantal measures $\mu_j$ 
for different $j$ are simply different Laurent expansions of the same 
finite meromorphic matrix $K(z)$, see (\ref{infkernel}) below.

The statement holds in the more general case of the ``massive'' laplacian determinant, which counts rooted spanning forests with weight 
$M$ per finite component. More generally, let $M$ be a (similarly periodic) 
function which assigns to each vertex $v$ a weight $M_v\ge0$, with at least one weight positive. 
An \emph{$M$-weighted rooted spanning forest} is a rooted spanning forest whose weight is the product of the conductances,
times the product, over all finite components, of the weight of the root of that component.

\begin{theorem}\label{massive1dthm}
Let $\G$ be a strip graph of width $m$ and conductance function $c$, and fix $M$ as above. 
For each integer $j\in[0,m]$ there is a unique translation-invariant Gibbs measure $\mu_j$
on $M$-weighted rooted spanning forests of $\G$, and with exactly $j$ bi-infinite components. The measure $\mu_j$ is determinantal for the edges. The growth rate $a_j$ is
$a_j = \log|C_m|+\sum_{i=j+1}^{m} \log\lambda_i$
where $1<\lambda_1<\dots<\lambda_m$ are the roots larger than $1$ of
the characteristic polynomial $$P(z) =\det(\Delta(z)+D_M),$$
and $C_m$ is its leading coefficient.
\end{theorem}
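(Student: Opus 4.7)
The plan is to adapt the proof of Theorem~\ref{1dthm} to the massive setting, essentially by replacing $\Delta$ with $\Delta+D_M$ throughout. The combinatorial ingredient is the Forman generalization of Kirchhoff's matrix-tree theorem: on any finite subgraph, $\det(\Delta+D_M)$ equals the $M$-weighted sum of rooted spanning forests, with a factor $M_v$ for the root of every finite component. This identity is the bridge between combinatorics and the spectral theory we will exploit.

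First I would analyze $P(z) = \det(\Delta(z)+D_M)$ as a Laurent polynomial. For $|z|=1$ the matrix $\Delta(z)+D_M$ is self-adjoint, and since $M\not\equiv 0$ it is strictly positive definite there, so $P(z)$ has no root on the unit circle. The symmetry $P(z)=P(1/z)$, inherited from $\Delta(z)^*=\Delta(1/\bar z)$, pairs the roots as $\lambda\leftrightarrow 1/\lambda$; together with the width-$m$ hypothesis (which controls the $z$-degree of $P$) this gives exactly $m$ roots $1<\lambda_1<\dots<\lambda_m$ outside the unit disk, with leading coefficient $C_m$. Compared to the pure-laplacian case, the strict inequality $\lambda_1>1$ is precisely where positivity of $D_M$ enters, and it is what allows $\mu_0$ to make sense.

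Next, build the finite meromorphic matrix $K(z)$ of (\ref{infkernel}), whose entries come from $(\Delta(z)+D_M)^{-1}$ paired with discrete differentials on oriented edges of $\G_1$. Define $K_j$ to be the Laurent expansion of $K(z)$ in the annulus whose defining contour encloses the reciprocal roots $1/\lambda_1,\dots,1/\lambda_m$ together with $\lambda_1,\dots,\lambda_j$; each choice of $j$ picks up a different subset of residues and produces a distinct translation-invariant, positive-semidefinite kernel, hence a distinct determinantal measure $\mu_j$. To identify $\mu_j$ with the Gibbs measure on $M$-weighted rooted spanning forests with exactly $j$ bi-infinite components, realize it as the thermodynamic limit of the finite-volume measures on the torus quotients $\G/n\Z$, restricted to the homological sector whose dual cycle system has the prescribed winding around the strip. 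The partition function of this sector factors, by spectral decomposition of the circulant-block operator, as $|C_m|^n\prod_i\lambda_i^n$ up to subexponential corrections, the product being over roots outside the chosen contour; this yields $a_j=\log|C_m|+\sum_{i=j+1}^m\log\lambda_i$. Uniqueness of the translation-invariant Gibbs measure in each sector follows because the spectral gap $|\lambda_i|\ne1$ forces any such measure to have the same single-edge statistics as $\mu_j$, and determinantal measures are pinned down by them.

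The main obstacle will be the topological bookkeeping that matches ``exactly $j$ bi-infinite components'' with the chosen Laurent annulus. In the pure-laplacian case of Theorem~\ref{1dthm} this runs through Temperley's bijection and its strip-graph refinement; in the massive setting one needs a Forman-type extension of the bijection in which rooting a finite tree at a vertex of mass $M_v$ plays the role of Temperley's auxiliary marker. Setting up this extended correspondence, and checking that the homological count it provides is compatible with the residue-by-residue decomposition of $K(z)$, is the technical heart of the proof.
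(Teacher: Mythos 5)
The broad architecture you sketch is right and matches the paper: $K(z)$ is a finite meromorphic matrix, the measures $\mu_j$ are its different Laurent expansions (your annulus description is correct), the growth rates come from counting which roots of $P$ lie inside the contour, and the identification with $j$ bi-infinite components comes from passing to finite quotients and picking out a homological sector. But there are three concrete gaps.

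First, the spectral input is underproved. You argue that $\Delta(z)+D_M$ is positive definite on $|z|=1$, hence $P$ has no roots there, hence by reciprocity there are $m$ roots outside the unit disk. That does not show they are real, positive, and pairwise distinct in modulus, which is what you actually use: the annuli $\lambda_j<r<\lambda_{j+1}$ only separate the roots if the $\lambda_i$ have distinct absolute values, and the formula $a_j=\log|C_m|+\sum_{i>j}\log\lambda_i$ is meaningless unless they are real and ordered. The paper proves this as a separate lemma by a maximum-principle argument: deform from the uniform grid $\G_{m,n}$ (where the roots and eigenfunctions are explicit) through the cone of positive conductances and nonnegative masses, and show the eigenfunctions can never acquire boundary zeros or saddle points, so roots cannot collide. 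Your positivity observation gives only the (important, but easier) fact that $\lambda_1>1$ when $M\not\equiv 0$.

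Second, your uniqueness argument is wrong as stated. ``The spectral gap forces the same single-edge statistics, and determinantal measures are pinned down by them'' has two problems: single-edge statistics do not determine a determinantal measure (they give only the diagonal of the kernel), and in any case uniqueness must hold among \emph{all} translation-invariant Gibbs measures in the sector, not just determinantal ones. The correct argument (as in Proposition~\ref{uniqueness}) is a coupling argument: a strip graph has a disconnecting fundamental domain $H_1$, every allowed local configuration has positive $\mu_j$-probability, so any other Gibbs measure $\mu'$ in the same sector can be coupled to $\mu_j$ to agree on $H_1+n_+$ and $H_1-n_-$ for arbitrarily large $|n_\pm|$, and the Gibbs property then forces agreement in between.

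Third, your proposed ``technical heart'' --- a Forman-type extension of Temperley's bijection adapted to the massive rooting --- is not how the paper proceeds and is likely a dead end: the paper itself notes that no Temperley-type bijection is known for $M\not\equiv 0$, and that the independent proof of the $M\equiv 0$ biperiodic case via Temperley does not extend. What replaces it is the bundle-laplacian expansion (Theorem~\ref{bundlelapmass}): $\det(\Delta(z)+D_M)$ is a weighted sum over \emph{multi-type} spanning forests of the cylinder quotient, with a factor $(2-z-1/z)^j$ for a configuration with $j$ noncontractible cycles. This is the combinatorial identity that bridges the topological bookkeeping to the residue decomposition of $K(z)$; it needs no bijection with dimers. (Also a small slip: the quotients $\G/n\Z$ of a strip graph are cylinder graphs, not tori.)
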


Here $D_M$ is the diagonal matrix of vertex weights, see below.
See Figure \ref{mt} for examples
with $0$ to $3$ crossings. 
\begin{figure}[htbp]
\begin{center}\includegraphics[width=3in]{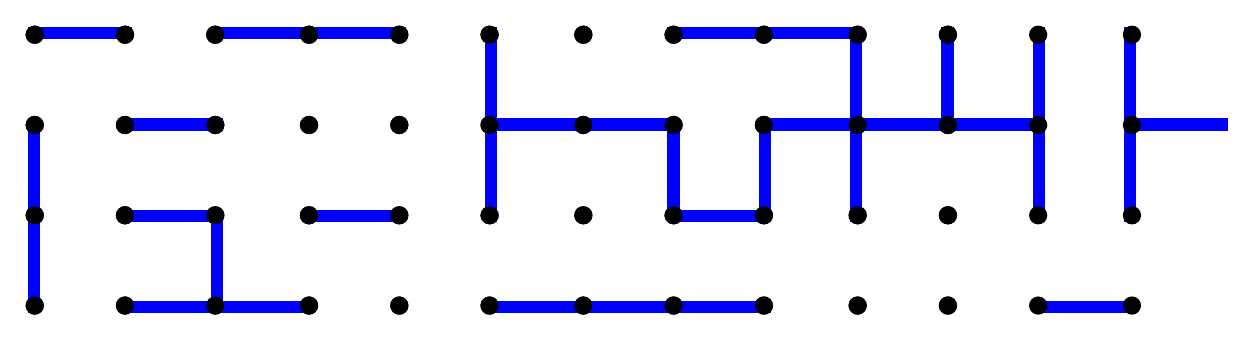}\end{center}\vskip.5cm
\begin{center}\includegraphics[width=3in]{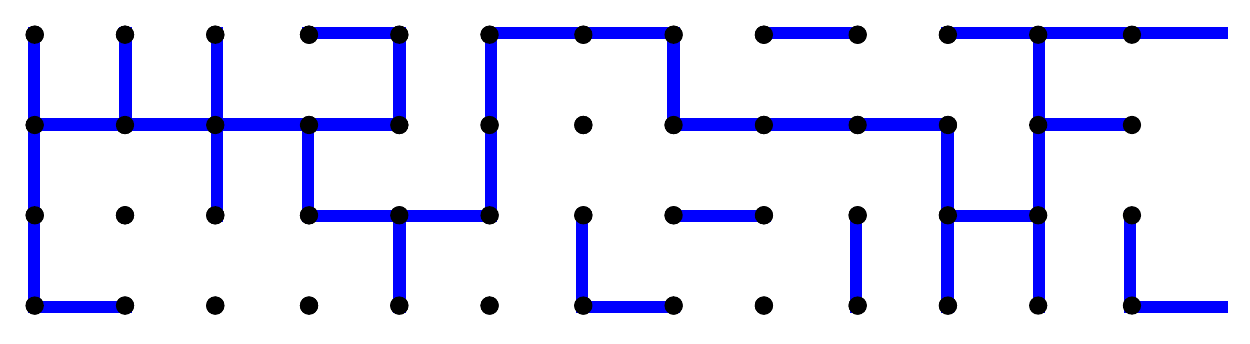}\end{center}\vskip.5cm
\begin{center}\includegraphics[width=3in]{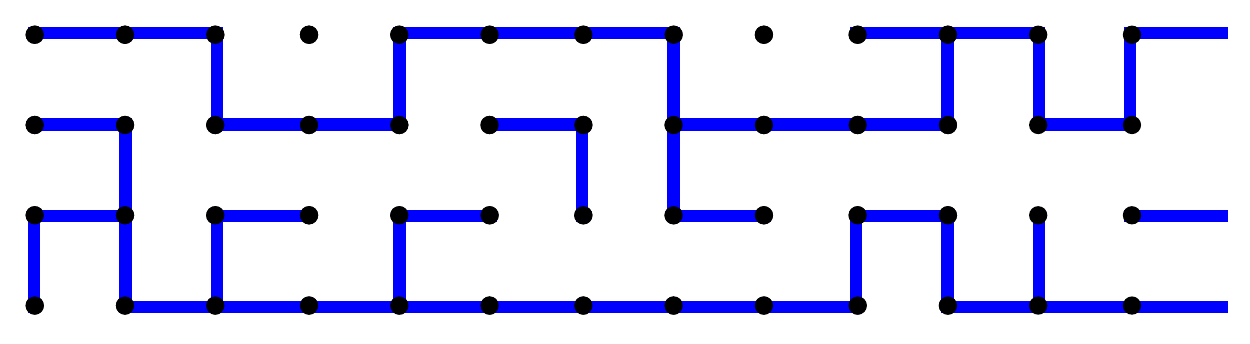}\end{center}\vskip.5cm
\begin{center}\includegraphics[width=3in]{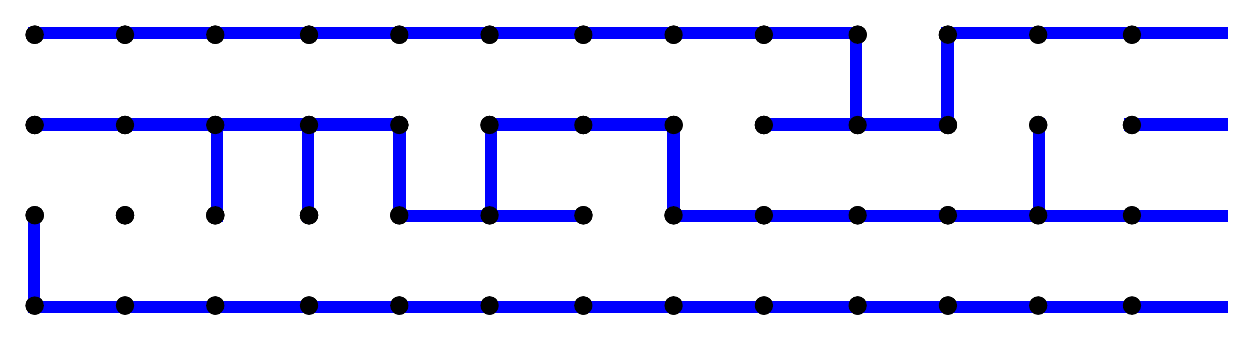}\end{center}
\caption{\label{mt}Exact samples of uniform rooted spanning forests with weight $M=1$ per finite component and with $0,1,2$ and $3$ crossings, respectively.}
\end{figure}
\medskip

The analogs of these measures on doubly-periodic planar graphs are richer.
For a doubly-periodic planar graph $\G$ and a periodic function $M\ge0$
we construct, for any ``realizable'' $(s,t)\in\R^2$, a measure $\mu_{s,t}$
on essential spanning forests of $\G$ (when $M\equiv0$) and $M$-rooted spanning forests (if $M\not\equiv0$, and where only finite components have roots)
where the infinite component trees contain bi-infinite paths with average slope $(s,t)$; here $s$ is the vertical density of paths and $t$ is the horizontal density.
The measure $\mu_{0,0}$ is the spanning tree measure in the case $M\equiv0$ and the $M$-rooted spanning forest measure without infinite components in the other case. 
The realizable slopes $(s,t)$ form a certain convex polygon
$N$, the \emph{flow polygon}, which is also 
the Newton polygon of the so-called characteristic polynomial $P(z,w)$. See Figure \ref{massivebiperiodic} for an example 
on the triangular grid.
We prove the following theorem (for definitions of terms in this statement, see below). 

\begin{theorem}\label{planethm}
Let $\G$ be a $\Z^2$-periodic planar graph in $\R^2$ with $\Z^2$-periodic conductances and periodic vertex weights $M\ge0$. 
Let $P(z,w)=\det(\Delta(z,w)+D_M)$ be the associated characteristic polynomial.
Let $N=N(P)$ be its Newton polygon. For each $(s,t)\in N$ there is a unique translation-invariant Gibbs measure
$\mu_{s,t}$ on $M$-weighted rooted spanning forests of $\G$ (with no finite components if $M\equiv0$),
with infinite components having average slope $(s,t)$
(for $(s,t)=(0,0)$ there is no infinite component if $M\not\equiv0$, and one infinite component if $M\equiv 0$).
The free energy of $\mu_{s,t}$ is the 
Legendre dual of the Ronkin function $R(x,y)$ of $P$. The measures $\mu_{s,t}$ are determinantal for edges.
\end{theorem}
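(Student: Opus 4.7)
The plan is to adapt the Kenyon--Okounkov--Sheffield construction from the bipartite dimer model to the massive rooted-spanning-forest setting, with the operator $\Delta(z,w)+D_M$ playing the role of the Kasteleyn operator. Rather than parameterizing the family directly by the slope $(s,t)\in N$, I would first build a two-parameter family $\{\mu^{B_x,B_y}\}_{(B_x,B_y)\in\R^2}$ indexed by ``magnetic fields,'' and only afterwards reparameterize by slope via Legendre duality. Concretely, on each torus $\G_n=\G/n\Z^2$ twist the edge conductances by a multiplicative $\Z^2$-cocycle with parameters $z_0=e^{B_x}$, $w_0=e^{B_y}$, form the corresponding $M$-weighted rooted spanning forest measure $\mu_n^{B_x,B_y}$ (whose partition function is a massive Laplacian determinant, by a Kirchhoff-type theorem), and take the $n\to\infty$ limit.

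For fixed $(B_x,B_y)$, existence and determinantality of the limit $\mu^{B_x,B_y}$ follow from Fourier analysis of the twisted massive Laplacian on $\G_n$, whose block decomposition has eigenvalues $P(z_0\zeta,w_0\eta)$ as $\zeta,\eta$ range over $n$-th roots of unity. The transfer-current-type edge kernel on $\G_n$ is a Riemann sum built from $(\Delta(z,w)+D_M)^{-1}$, and as $n\to\infty$ it converges to a double contour integral
\[
K(e,e')=\frac{1}{(2\pi i)^2}\oint_{|z|=z_0}\oint_{|w|=w_0} F_{e,e'}(z,w)\,\frac{dz}{z}\frac{dw}{w},
\]
where $F_{e,e'}$ is the matrix element of $(\Delta(z,w)+D_M)^{-1}$ sandwiched between edge-incidence vectors, multiplied by monomials encoding the relative displacement of $e$ and $e'$. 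The simple Harnack property of the spectral curve $\{P(z,w)=0\}$, established earlier in the paper, guarantees that the integration torus sits in a component of the amoeba complement (with a separate argument in degenerate boundary cases), so $K$ is well defined and has the pointwise decay needed to identify the limit as a translation-invariant Gibbs measure in the sense of DLR.

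To convert magnetic fields into slopes I would use the Ronkin function
\[
R(x,y)=\frac{1}{(2\pi)^2}\int_0^{2\pi}\int_0^{2\pi}\log|P(e^{x+i\theta},e^{y+i\phi})|\,d\theta\,d\phi,
\]
whose gradient is the classical moment map of amoeba theory. A direct partition-function computation shows that the free energy of $\mu^{B_x,B_y}$ equals $R(B_x,B_y)$, while differentiating edge-occupation probabilities in $(B_x,B_y)$ identifies $(s,t)=\nabla R(B_x,B_y)$ as the mean horizontal/vertical crossing density of the infinite components. For a simple Harnack spectral curve, $\nabla R$ restricted to the interior of the amoeba is a diffeomorphism onto $\mathrm{int}(N)$ and is locally constant on the amoeba complement, taking values at the lattice points of $N$; this produces a measure $\mu_{s,t}$ for every $(s,t)\in N$, with free energy on the slope side given by the Legendre transform $R^*(s,t)$. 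The frozen point $(s,t)=(0,0)$ reduces by inspection to the usual spanning tree measure when $M\equiv 0$ and to the $M$-rooted spanning forest with no bi-infinite component when $M\not\equiv 0$.

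The main obstacle is \emph{uniqueness} of the translation-invariant Gibbs measure of slope $(s,t)$. My plan is to follow the Sheffield-style variational strategy: decompose any translation-invariant Gibbs measure into ergodic components (each of which has a deterministic slope by the ergodic theorem applied to crossing counts), establish a variational principle identifying the Legendre dual $R^*(s,t)$ as the specific free energy of every translation-invariant Gibbs measure of slope $(s,t)$, and exploit strict convexity of $R$ on the amoeba interior (a consequence of the Harnack property) to conclude that the maximizer is unique and coincides with the constructed $\mu_{s,t}$. Boundary slopes $(s,t)\in\partial N$ demand additional care: the spectral curve then meets the integration torus and the kernel degenerates, so one must show the limit still defines a Gibbs measure of the claimed slope, using the precise location of the real ovals of the Harnack curve to control the degeneration. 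A spanning-forest analog of the dimer height function, a ``flow function'' valued in an appropriate abelian group, serves as the order parameter throughout; setting it up rigorously and proving the required concentration estimates is the most delicate part of the argument.
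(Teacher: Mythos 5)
Your construction of $\mu_{s,t}$ is essentially the same as the paper's. The paper also twists the Laplacian over the torus $\G_n$ by a flat line bundle with monodromy $(z,w)$, identifies the resulting partition function $\det(\Delta(z,w)+D_M)$ with a weighted sum over multi-type spanning forests, and obtains the kernel of $\mu_{s,t}$ as a double contour integral of $Cd(\Delta(z,w)+D_M)^{-1}d^*$ over the torus $|z|=e^x$, $|w|=e^y$ chosen by $\nabla R(x,y)=(s,t)$ (Theorem~\ref{Kst}). Your ``magnetic field'' parameterization by $(B_x,B_y)$ is just a renaming of $(x,y)=(\log|z|,\log|w|)$, and the concentration of the twisted measure on MTSFs of the right slope is proved in the paper exactly as you propose, by using the coefficient asymptotics of \cite{KOS} (exponential concentration at rate $n^2\sigma(s,t)$), strict convexity of $\sigma$, and the fact that the Harnack property makes $\nabla R$ a diffeomorphism from the amoeba interior to $\mathrm{int}\,N$. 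One technical point you should be aware of: for real positive $(z_0,w_0)$ on the contour, the weight $(2-z^iw^j-z^{-i}w^{-j})^k$ is not a positive weight on configurations, so the twisted ``measure'' at fixed $(z,w)$ is a priori only a signed measure; the paper arranges $z^{i_0}w^{j_0}<0$ with small arguments to make the dominant terms have a common sign, and only the $n\to\infty$ limit is a genuine probability measure.

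Where you genuinely diverge is on uniqueness. The paper's stated proof does not actually establish uniqueness of the translation-invariant Gibbs measure of given slope; it proves existence of the limit, determinantality, and — via tail triviality of determinantal processes — that the constructed measure has a deterministic slope. A coupling argument of the kind used in the strip case (Proposition~\ref{uniqueness}) is not spelled out in two dimensions. Your plan of an ergodic decomposition plus a Sheffield-type variational principle and strict convexity of the surface tension is a sound and in fact more complete route; it mirrors what is done for dimers in \cite{KOS} and for ESFs in \cite{Sheff.ESF}. You correctly flag the missing ``height function'' as the delicate point: since cycles in an MTSF are unoriented and $P$ involves $2-m-1/m$ rather than $m$, the order parameter is only a $\Z$-valued $1$-cocycle up to sign, i.e.\ an unsigned flow, and one must pass to the orientation double cover (as the paper does later in its limit-shape section) or otherwise track orientations to run the specific-entropy argument. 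If you carry that out carefully, your uniqueness argument would strengthen what is in the paper.
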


A version of Theorem \ref{planethm} for $M\equiv 0$ was proved independently by W. Sun in \cite{Sun} using Temperley's bijection \cite{KPW}
between dimers and trees. No such bijection is known for $M\not\equiv 0$. Further motivation for studying the $M\ge0$ case comes
from the \emph{conformal invariance} properties of the (scaling limits of the) measures $\mu_{s,t}$ 
for any $(s,t)\in N$, $(s,t)\ne(0,0)$ with
finitely many exceptions. We will not study this conformal invariance here, beyond the correlation decay results below.
 
See Figure \ref{massivebiperiodic} for a sample from the measure $\mu_{s,t}$.
\begin{figure}[htbp]
\begin{center}\includegraphics[width=6in]{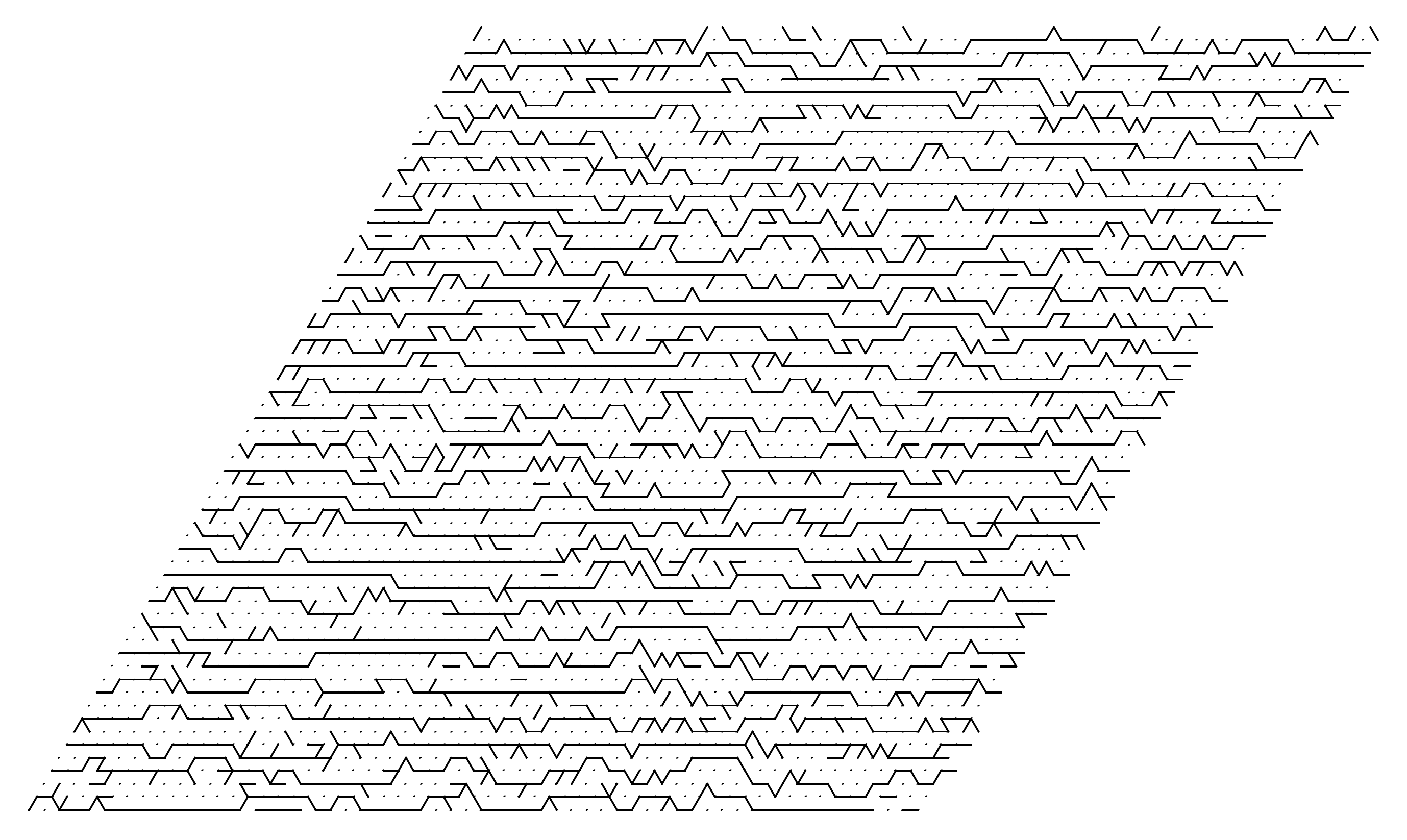}\end{center}
\caption{\label{massivebiperiodic}Sample from the measure $\mu_{0.5,0}$ for constant conductances on the triangular grid with $M\equiv4$.
(We used an MCMC algorithm of unknown mixing time, so this is only an approximately random sample.)}
\end{figure}

An unexpected feature of the measures $\mu_{s,t}$ is that for \emph{integer} slopes $(s,t)\in N$ 
and generic conductances, the decay of edge correlations is exponential in the distance between edges (with one exception,
the case $M\equiv0$ and $(s,t)=(0,0)$).
For $\mu_{s,t}$ for noninteger $(s,t)$ in the interior of $N$,
edges have quadratic decay of correlations. 

\begin{theorem}\label{harnackthm}
In both the massive and massless case, the spectral curve $\{(z,w)~|~P(z,w)=0\}$ is a simple Harnack curve,
symmetric under $(z,w)\to(1/z,1/w)$.
The edge correlation decay is quadratic (in the separation distance between edges)
for noninteger points $(s,t)\in \text{int}(N)$, and exponential
at integer points in $N$, unless the spectral curve $\{P=0\}$ has a real node at a point $(z,w)$
where $\nabla R(\log|z|,\log|w|)=(s,t)$, $R$ being the Ronkin function of $P$.
\end{theorem}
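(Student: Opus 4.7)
The plan is to split Theorem~\ref{harnackthm} into its three assertions---the symmetry $(z,w)\mapsto(1/z,1/w)$, the simple Harnack property, and the quadratic-vs-exponential dichotomy for edge-correlation decay---and to derive the first two from the Hermitian positivity of the massive Laplacian on $\T^2$, with the decay statement then following from a standard contour-integral asymptotic for the determinantal kernel.

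Symmetry is essentially bookkeeping. The entries of $\Delta(z,w)+D_M$ are real-coefficient Laurent polynomials in $z,w$, and on $\T^2$ this matrix is Hermitian: it is the Fourier restriction of a real self-adjoint operator on the universal cover. Replacing $(z,w)$ by $(1/z,1/w)$ transposes the matrix up to conjugation by a monomial diagonal coming from the fundamental-domain embedding, so taking determinants and using reality of the coefficients yields $P(1/z,1/w)=\pm z^a w^b\,P(z,w)$ for some integers $a,b$, which is the claimed invariance after a monomial normalization.

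For Harnackness, the decisive input is that $\Delta(z,w)+D_M$ is Hermitian positive semi-definite on $\T^2$, so $P\ge 0$ there and vanishes only where the matrix has a nontrivial kernel. I would verify the simple Harnack property via Mikhalkin's characterization: a real curve in $(\C^*)^2$ with Newton polygon $N$ is simple Harnack iff its amoeba attains the maximal area $\pi^2\,\mathrm{area}(N)$, equivalently iff the logarithmic Gauss map has generic fiber of size at most two. The sign constraint $P\ge 0$ on $\T^2$ rules out transverse crossings of $\{P=0\}$ with $\T^2$, so the intersection is a union of tangencies and (necessarily real) nodes of the spectral curve, which is the configuration characteristic of Harnack curves; combined with a count of these tangencies via the spectral decomposition of $\Delta(\theta,\phi)+D_M$ as $(\theta,\phi)$ varies, one matches the maximal oval count demanded by Harnack's inequality. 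This step is the crux of the proof, since unlike the bipartite dimer setting of Kenyon--Okounkov--Sheffield, no Kasteleyn/Temperley framework is available for $M\not\equiv 0$, and the argument has to rely on Hermitian positivity of the Laplacian directly.

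For correlation decay, the edge kernel of $\mu_{s,t}$ has a contour-integral representation
\[
K_{(s,t)}(e,e')=\iint_{|z|=e^x,\,|w|=e^y}\frac{Q_{e,e'}(z,w)}{P(z,w)}\,z^{-\Delta x}w^{-\Delta y}\,\frac{dz\,dw}{zw},
\]
with $(x,y)$ satisfying $\nabla R(x,y)=(s,t)$ and $(\Delta x,\Delta y)$ the coordinate separation of $e$ from $e'$. For integer $(s,t)\in N$, $(x,y)$ lies in a bounded complementary component of the amoeba on which $1/P$ is analytic, so the Fourier coefficients decay exponentially in $|(\Delta x,\Delta y)|$ at rate equal to the distance from $(x,y)$ to the amoeba boundary. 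For non-integer $(s,t)\in\mathrm{int}(N)$, $(x,y)$ lies on the amoeba boundary, and by the Harnack property just established, $P$ has a conjugate pair of simple transverse zeros $(z_0,w_0),(\bar z_0,\bar w_0)$ on the contour torus; standard stationary-phase analysis at these simple poles gives kernel decay $\sim 1/|e-e'|$ and hence edge-correlation decay $\sim 1/|e-e'|^2$, since covariances in a determinantal process are bilinear in the kernel. The stated exception, an integer $(s,t)$ for which $\{P=0\}$ has a real node at the $(z,w)$ with $\nabla R(\log|z|,\log|w|)=(s,t)$, is precisely the situation in which the contour cannot be pushed off the singularity, and there one obtains only polynomial decay.
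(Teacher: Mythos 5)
The correlation-decay half of your argument is essentially the paper's: represent the kernel as a Fourier coefficient of a rational function with denominator $P$ over the torus $\{|z|=e^x,|w|=e^y\}$ with $\nabla R(x,y)=(s,t)$, and read off exponential decay when this torus misses $\{P=0\}$ (integer $(s,t)$), versus $1/r$ kernel decay and $1/r^2$ covariance decay when it meets the curve in two conjugate transverse points, with the real node as the stated exception (this is the argument via \cite{KOS}). But that analysis is only available once the simple Harnack property is known, and there your proposal has a genuine gap. Hermitian positive semi-definiteness of $\Delta(z,w)+D_M$ holds only on the unit torus $|z|=|w|=1$, where $d(1/z,1/w)^*$ is the conjugate transpose of $d(z,w)$; for $|z|=e^{x}$, $|w|=e^{y}$ with $(x,y)\neq(0,0)$ the matrix is not Hermitian and no sign constraint on $P$ is available. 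Simple Harnackness, however, is a statement about \emph{all} tori (at most two intersection points with each torus, as in the characterization quoted from \cite{PR}), equivalently about attaining the maximal number $g+1$ of ovals or maximal amoeba area. Knowing $P\ge 0$ on the unit torus rules out transverse crossings of that one torus only, and says nothing about how many ovals the real curve has or how it meets tori corresponding to other points of the amoeba. Your proposed ``count of tangencies via the spectral decomposition of $\Delta(\theta,\phi)+D_M$'' is exactly the missing step and is not supplied; in the bipartite dimer setting the analogous global control comes from absorbing magnetic shifts $(z,w)\mapsto(e^{B_1}z,e^{B_2}w)$ into modified edge weights, and no such mechanism is invoked, or available in the same form, for the Laplacian.

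For comparison, the paper's route is entirely different: it starts from the known fact \cite{GK} that $P$ is simple Harnack when $M\equiv 0$ (with a real node at $(1,1)$), shows that for minimal graphs the boundary coefficients of $P$ do not depend on $M$ (so no degeneration at infinity can occur as $M$ varies), proves by a quasiperiodicity/maximum-principle argument that the node at $(1,1)$ opens into an oval as soon as some $M_v>0$, and controls the remaining ovals by tracking the \emph{special divisor} (common zeros of a column of the adjugate of $\Delta(z,w)+D_M$): this divisor is computed explicitly for the square grid (one real point on each non-central oval, via the harmonic-quadrilateral lemma), and deformations of the conductances in $[0,\infty]$ and of $M$ away from $0$ preserve one real divisor point per oval, so the number of real components cannot drop and the curve remains simple Harnack. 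Your Hermitian-positivity idea does correctly capture the behavior of the central oval (the paper uses essentially that argument at $(1,1)$), but by itself it cannot yield the full Harnack property; some substitute for the deformation/special-divisor machinery is needed.
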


Finally, we consider scaling limits of the essential spanning forest measures on $\eps\G$ when $\eps\to0$.
In particular with fixed boundary connections we consider the following \emph{limit shape} problem (see Figures \ref{limitshapefig}, \ref{limitshapefol}).
\begin{figure}[htbp]
\begin{center}\includegraphics[width=3in]{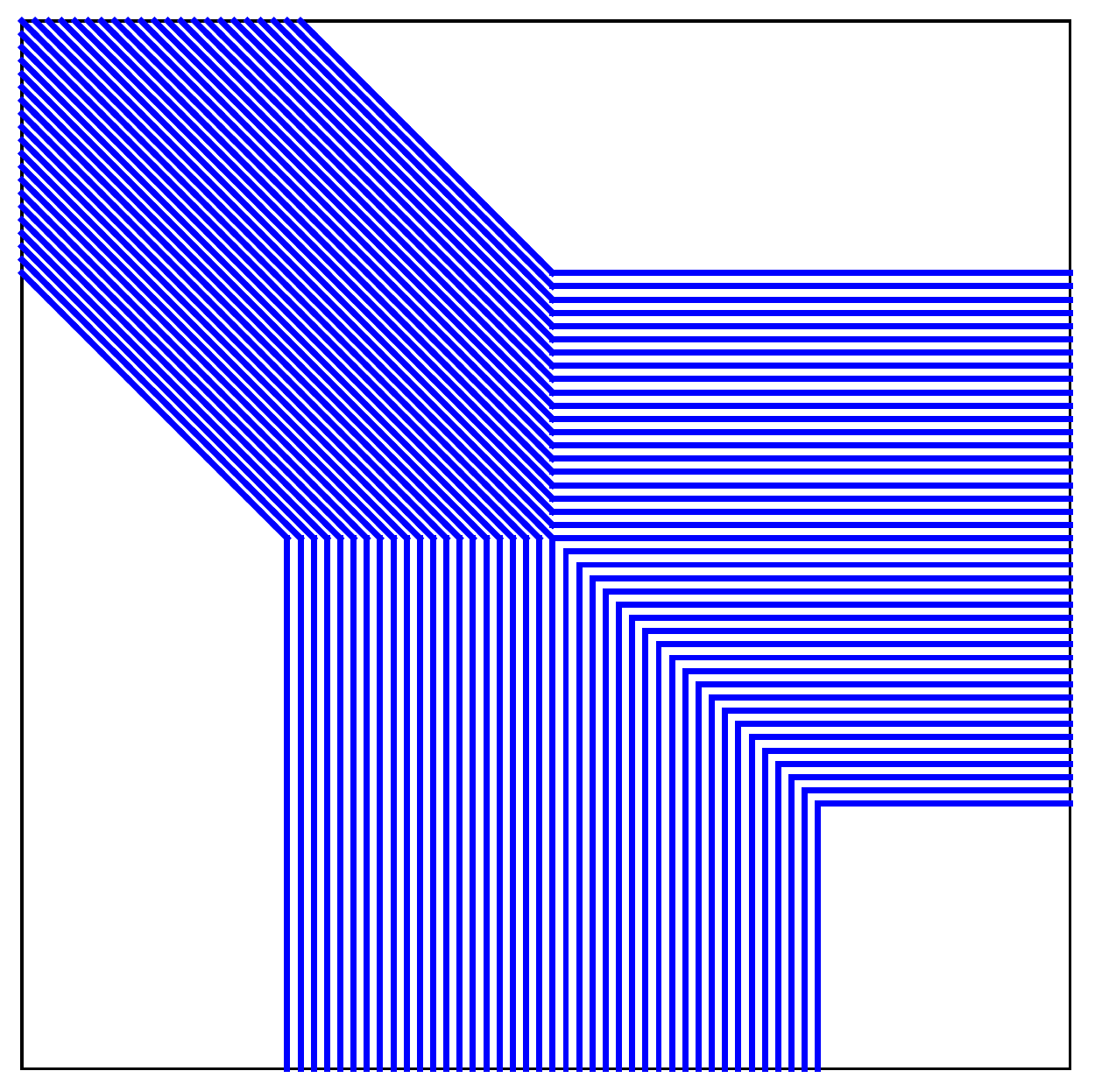}\includegraphics[width=3in]{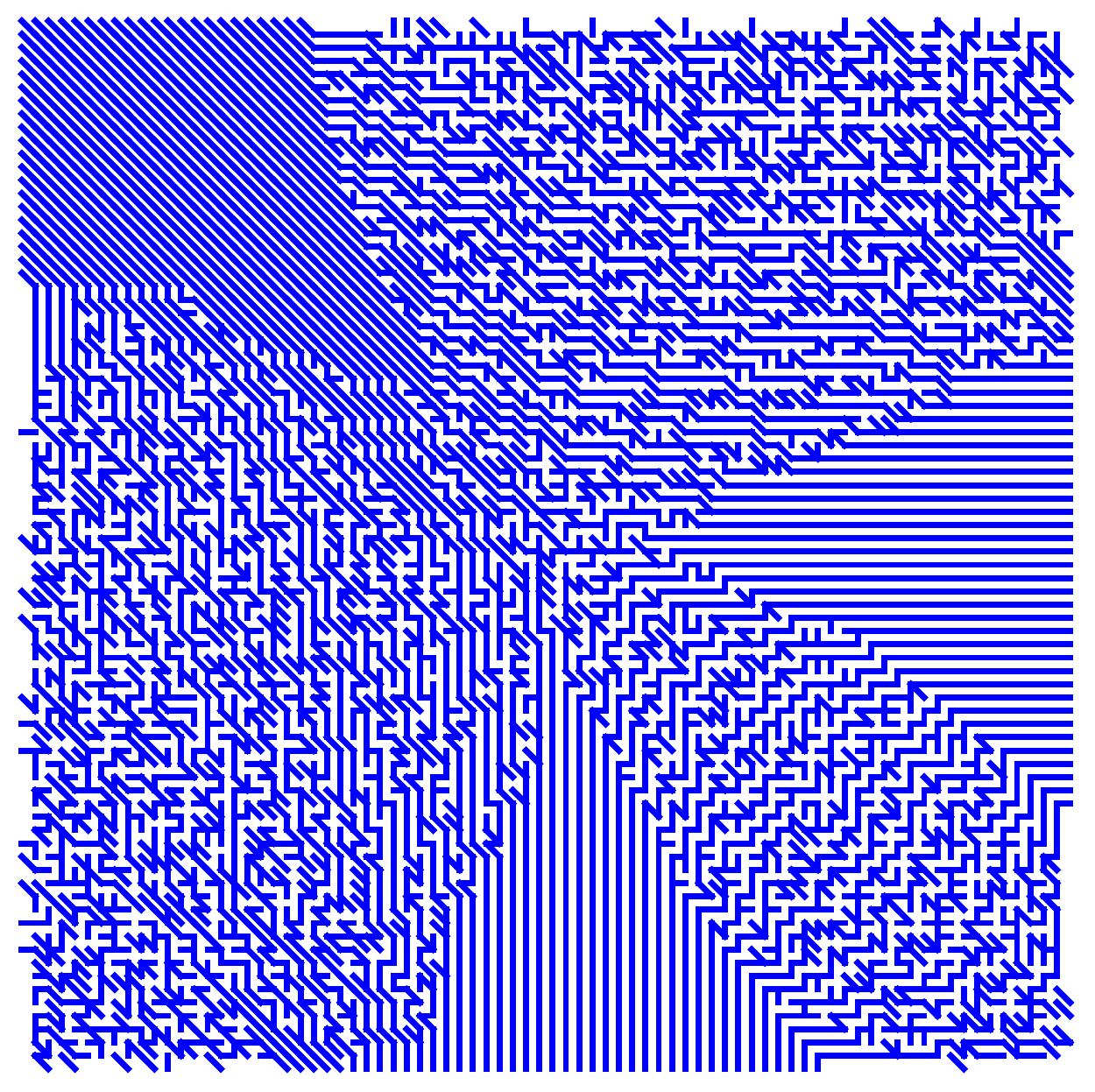}\end{center}
\caption{\label{limitshapefig}The initial grove and a random configuration with the same
boundary connections (the parts of the boundary with no connections have wired boundary conditions).}
\end{figure}
\begin{figure}[htbp]
\begin{center}\includegraphics[width=3in]{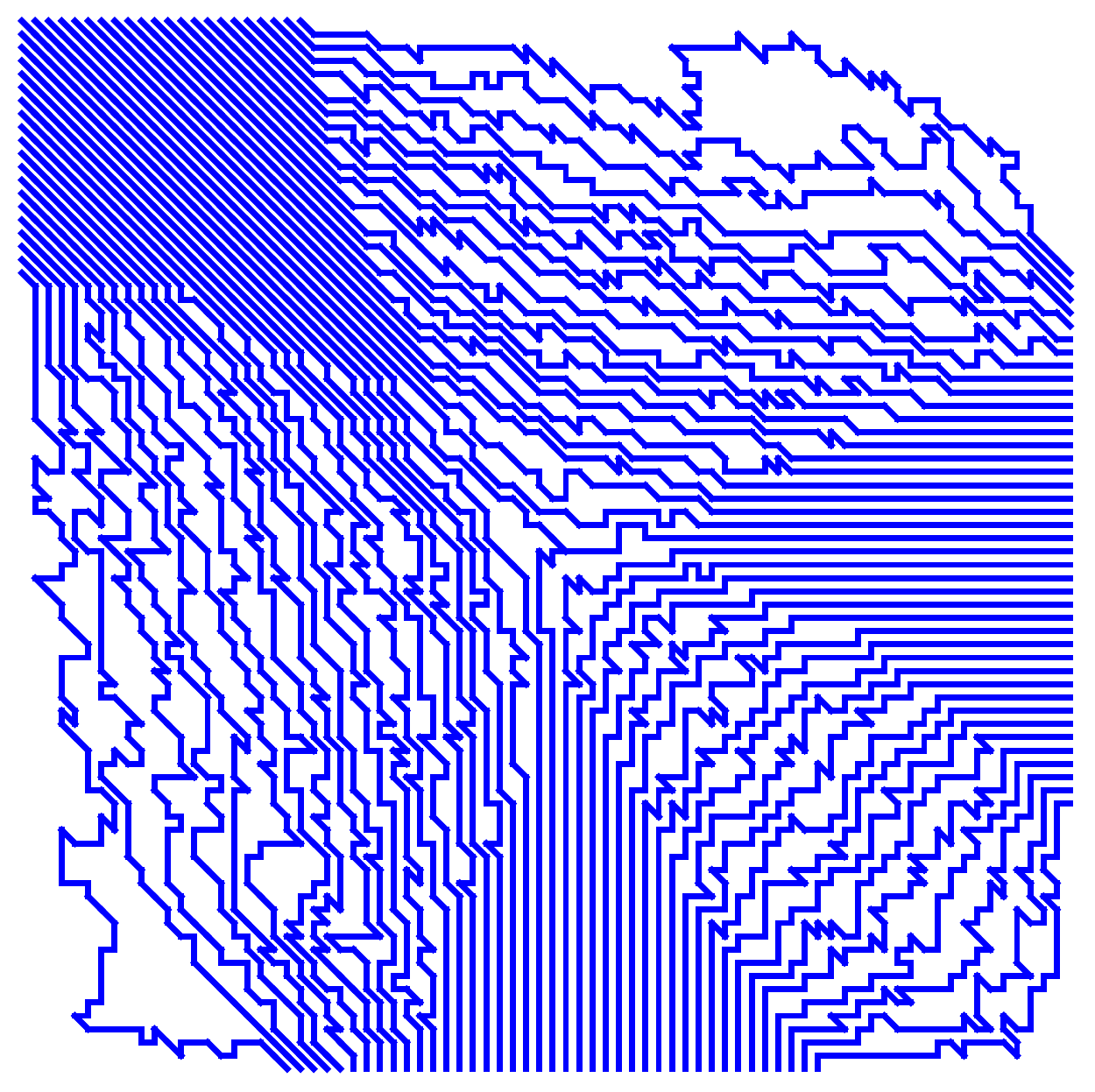}\end{center}
\caption{\label{limitshapefol}The ``trunks" of the random grove of the previous figure. This set of curves approximates the ``grove limit shape", which is a measured foliation.}
\end{figure}
Let $U$ be a simply connected planar domain
with piecewise smooth boundary and let $\cal F$ be a singular measured foliation on $U$ with a finite number of singularities $S$, 
where the measure has transverse derivative in $N$
and leaves transverse to the boundary.
Assume furthermore that all leaves of $\cal F$ begin and end on the boundary.
We approximate $U$ with a sequence of subgraphs $U_\eps\subset\eps\G$ for $\eps>0$. On $U_\eps$ take a random ESF
with component trees isotopic to $\cal F$ in an appropriate sense: the ``trunk'' of each the component tree is isotopic rel $S$ to a leaf of 
$\cal F$ with fixed endpoints on the boundary, and the number of components per unit boundary length approximates the transverse measure of $\cal F$.

Then we prove (see Theorem \ref{limitshapethm} for the exact statement)
\begin{theorem}\label{limitshape} There is a unique (nonrandom) singular measured foliation ${\cal F}_0$ of $U$ with the property
that as $\eps\to0$ a random ESF isotopic to $\cal F$ converges to ${\cal F}_0$: component trees
converge to leaves of ${\cal F}_0$. 
The foliation ${\cal F}_0$ satisfies a variational principle: it minimizes the 
\emph{surface tension} subject to the constraints of being isotopic to $\cal F$. 
\end{theorem}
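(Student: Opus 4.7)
The plan is to adapt the variational method used for the dimer model by Cohn--Kenyon--Propp and Kenyon--Okounkov--Sheffield, taking the translation-invariant Gibbs measures $\mu_{s,t}$ of Theorem~\ref{planethm} as the fundamental building blocks and using the spectral-curve information from Theorem~\ref{harnackthm} to get strict convexity and correlation decay.

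First I would define the surface tension. By Theorem~\ref{planethm}, the per-area free energy of $\mu_{s,t}$ is the Legendre dual of the Ronkin function $R$ of $P(z,w)$; call this Legendre dual $\sigma(s,t)$, a convex function on $N$. Theorem~\ref{harnackthm} says the spectral curve is simple Harnack, hence $R$ is strictly convex on the interior of its amoeba with gradient image equal to $\mathrm{int}(N)$, and so $\sigma$ is strictly convex on $\mathrm{int}(N)$. For a measured foliation $\mathcal F'$ whose transverse derivative at $(x,y)$ is $(s(x,y),t(x,y))\in N$, set
\[
\ent(\mathcal F')=\int_U \sigma(s(x,y),t(x,y))\,dx\,dy.
\]
The isotopy class rel $S$, together with the prescribed transverse measure on $\partial U$, translates into a divergence/flux constraint on $(s,t)$: integrating across any cross-cut recovers combinatorial data depending only on the isotopy class. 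The admissible set is convex and weakly compact, and strict convexity of $\sigma$ produces a unique minimizer $\mathcal F_0$, characterized by the Euler--Lagrange equation $\mathrm{div}(\nabla\sigma(s,t))=0$ on $U\setminus S$.

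Second I would establish a large deviation principle with rate $\eps^{-2}$ and rate function $\ent(\mathcal F')-\ent(\mathcal F_0)$. Writing $Z_\eps[\mathcal F']$ for the weighted sum of ESFs on $U_\eps$ whose trunk configuration is close to $\mathcal F'$, the target is
\[
\eps^2\log Z_\eps[\mathcal F'] \longrightarrow -\ent(\mathcal F')+\text{(boundary term)}.
\]
The upper bound follows by covering $U_\eps$ with mesoscopic boxes, coarsening each box to its local slope $(s,t)$, and bounding the contribution of each box by the per-area free energy of the ergodic Gibbs measure $\mu_{s,t}$. The lower bound is a patching argument: on each mesoscopic box sample independently from $\mu_{s,t}$ with $(s,t)$ matching the local slope of $\mathcal F'$, then use the quadratic/exponential edge-correlation decay of Theorem~\ref{harnackthm} to bound the entropy cost of surgering together compatible boundary connections between adjacent boxes. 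Combined with the strict convexity of $\sigma$, this forces the conditioned random ESF to concentrate on $\mathcal F_0$.

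The main obstacle is the bookkeeping of the topological isotopy constraint, for which ESFs have no single scalar height function analogous to the dimer one. The substitute I envisage is a pair of integer-valued flows: orient each tree toward a distinguished end and read off, along each transverse arc, a pair of winding numbers that rescales to the slope $(s,t)$; the isotopy class rel~$S$ then corresponds to fixing the integrals of these flows around each singular point. A subordinate difficulty is behavior near $S$ and $\partial U$, where $(s,t)$ may approach $\partial N$: one needs uniform tightness to rule out leakage of transverse mass into singular points, and to show that the random trunks genuinely thread the prescribed homotopy channels between boundary marked points. Once these ingredients are in place, the variational problem delivers existence and uniqueness of $\mathcal F_0$, and the large deviation estimate upgrades this to convergence in probability of the random trunks to the leaves of $\mathcal F_0$.
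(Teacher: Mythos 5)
Your overall architecture matches the paper's: define $\sigma$ as the Legendre dual of the Ronkin function, prove a CKP-style local-to-global free energy estimate, and conclude via a concentration argument. Where you diverge, and where the real work is hidden, is in the uniqueness of the minimizer and the encoding of the isotopy constraint.

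The assertion that ``the admissible set is convex'' is a genuine gap. The set $\Omega(\mathcal F)$ of measured foliations isotopic to $\mathcal F$ does not carry an obvious linear structure: a measured foliation is an \emph{unsigned} one-form $|\omega|$, and a naive convex combination $t|\omega_1|+(1-t)|\omega_2|$ is not even well defined, let alone of the same isotopy type with the same singularities. The paper handles this with a triangulation argument: triangulate $U$ with vertices in $S$ and on $\partial U$, with triangle edges transverse to the leaves; on each triangle both foliations admit local signed coordinates $y_1,y_2$ whose signs can be matched on the edges; then $|t\,dy_1+(1-t)\,dy_2|$ is a well-defined nonsingular interpolation on each triangle, and these pieces glue consistently and have the same edge integrals, hence the same isotopy class. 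Only after constructing this interpolation can one invoke convexity of $\sigma$ to get strict convexity of $\ent$ along the path and hence uniqueness. Without something of this kind your uniqueness claim has no support; strict convexity of $\sigma$ alone does not produce a unique minimizer on a nonconvex constraint set.

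The device you suggest for tracking the isotopy class, a pair of winding-number flows obtained by orienting each tree toward a distinguished end, is not the paper's choice and is somewhat underspecified: the orientation is not globally consistent around odd-pronged singularities, which is exactly the obstruction you flagged. The paper instead passes to the two-sheeted orientation cover $\tilde U$, branched over the odd-pronged singularities, where a single global height function $\tilde y$ exists and is antisymmetric under the deck transformation; compactness of $\Omega(\mathcal F)$ is then read off from uniform compactness of Lipschitz height functions on $\tilde U$. Your flows would need to be shown to lift coherently to such a cover, at which point you would essentially be reconstructing the paper's argument. If you can make that precise it would be a legitimate alternative bookkeeping, but as written it does not yet replace the orientation-cover construction. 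The remaining ingredients, the free energy estimate by mesoscopic boxes and the concentration step, are fine and in line with the paper.
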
 

Inspiration for Theorem \ref{limitshape} comes from the work of Peterson and Speyer \cite{PS}
who proved a limit shape theorem for ``cube groves". Their result is in fact a special case of Theorem \ref{limitshapethm}, appearing
long before the definitions of the measures $\mu_{s,t}$.

The underlying variational problem of Theorem \ref{limitshape} is algebraically identical to that occurring for certain bipartite dimer models;
as a consequence the minimization equation can be reduced to the complex Burgers' equation, and can therefore
be solved in principle by the method of complex characteristics as in \cite{KO2}. 
However, only for special graphs and special boundary conditions 
has this been worked out explicitly, see \cite{KO2}. 
\bigskip

\noindent{\bf Acknowledgements.}
I would like to thank Robin Pemantle for discussing large parts of the paper with me, and David Speyer for the initial conversations leading to this paper. I thank David Jekel for pointing out an error in a previous version. While this paper was in preparation I had motivational conversations with Wangru Sun who proved Theorem \ref{planethm} for $M\equiv0$ independently in \cite{Sun}.

\section{Background}

For background (beyond what is below) on spanning trees, determinantal measures and their relation to the Laplacian, 
see the modern treatment in \cite{LP}.

\subsection{Trees and measures}

A \emph{spanning forest} of a connected finite graph $\G$ is a collection of edges which has no cycles. 
A \emph{rooted spanning forest} is a spanning forest in which each component has a marked vertex called the root.
A \emph{spanning tree} is a connected spanning forest (connecting all vertices).
If $\G$ has a \emph{boundary}, by which we mean
there is a specified subset $B$ of vertices called boundary vertices (in this paper,
usually a subset of the vertices on the outer face of an embedded planar graph),
then a \emph{grove} of $\G$ (sometimes also called \emph{essential spanning forest}) is a spanning forest each of whose components contains
at least one boundary vertex.
In the case $M>0$, a \emph{massive grove} is a spanning forest of $\G$, each of whose components either has a root (has a marked vertex)
or contains
at least one boundary vertex, or both. Components without roots are called \emph{special} components. 
Note that a grove is a massive grove in which each component is special.
For convenience we unify the terminology and refer to both (massless) groves and massive groves as
simply ``groves" with the massive modifier being understood from the context. 

If $\G$ is infinite, without boundary, by an \emph{essential spanning forest} (ESF) we mean a subset of edges
each of whose components is an infinite tree. An \emph{essential rooted spanning forest} (ERSF) is a subset of edges each of whose 
components is either a finite and rooted tree, or an infinite and unrooted tree. 

For finite $\G$, if $c:E\to\R_{>0}$ is a positive function on the edges (called \emph{conductance}), we define
a probability measure $\ST=\ST_c$ on the set of spanning trees by giving a tree a probability
proportional to the product of its edge conductances: $\nu(T)=\frac1Z\prod_{e\in T}c_e$, where $Z$ is a normalizing constant and $c_e$ is the 
conductance of edge $e$. We call $\ST$ the \emph{spanning tree measure}.
Likewise $c$ defines a probability measure on massless groves by giving a grove a probability proportional to the product of its edge conductances.
For $M\ge0$ a weight function on vertices we define a probability measure on massive groves of $\G$ where a grove $T$ 
has probability $\mu(T) = \frac1Z\prod_{\text{roots $v$}} M_v\prod_{\text{edges $e$}}c_e$ for a constant $Z$.

For an infinite graph $\G$, a \emph{Gibbs measure} on ESFs
is a probability measure on ESFs of $\G$,
with the property that ratios of probabilities of cylinder sets are equal to the ratios of their products of edge conductances,
in the following sense (see \cite{Sheff.ESF}).
Take any finite induced subgraph $\H$ of $\G$, and a spanning tree $T$ of $\G$. Erase the edges of $T$ in $\H$ and 
consider all possible completions of $T$ in $\H$
which have the same connections (within $\H$) between boundary vertices as in $T$. The Gibbs property is that,
conditional on $T$ outside of $\H$,
the probability of any of these completing configurations is proportional to the product of its edge conductances. 

Such Gibbs measures occur as limits of the spanning tree measures or grove measures 
on a growing sequence of finite subgraphs of $\G$ exhausting $\G$.
For some graphs, a limit of spanning tree measures may be supported on spanning trees on $\G$; for other graphs it is supported on ESFs with possibly many components.
For the graphs we consider here (strip graphs and bi-periodic planar graphs) the limit of a tree measure is supported on trees
(the limit of a grove measure may not be, however).

For $M\ge0$ there is an analogous notion of \emph{Gibbs measure on $M$-weighted essential rooted spanning forests}: 
with the same setup as in the above definition of Gibbs measure on ESFs, erase the edges of $T$ in $\H$,
but keep track of any root vertices in $T$, which are considered boundary vertices of $\H$ for the sake of this definition. 
The weight
of any completing configuration of $T$ in $\H$ (having the same connections within $\H$ between boundary vertices as in $T$) is then proportional to the product of its edge conductances times $\prod_vM_v$, the product of weights of the roots (the completing
configuration may have additional roots not present in $T$).

\subsection{Determinantal measures}

A probability measure $\mu$ on $\Omega=\{0,1\}^n$ is \emph{determinantal} if there is an $n\times n$ matrix $K$, the \emph{kernel},
with the following property. Let $S=\{i_1,\dots,i_k\}$ be any subset of $[1,2,\dots,n]$; the event that for a random point of $\Omega$, all indices in $S$ are $1$ is
$\mu(S)=\det(K_S^S)$,
that is, the determinant of the submatrix of $K$ consisting of rows and columns in $S$.
The paradigmatic example is the case when $[n]$ indexes the edges of a connected graph, and $\mu$ is the spanning tree measure,
see Theorem \ref{BPthm} below.

For a determinantal measure, the single point probabilities can also be computed with a similar determinant:

\begin{lemma}\label{singleton}
A measure $\mu$ on $\{0,1\}^n$ is determinantal with kernel $K$ if and only if
for every point $x=(x_1,\dots,x_n)\in\{0,1\}^n$, 
$$\mu(x) = (-1)^{n-|x|}\det(X-K),$$
where $X$ is the diagonal matrix with diagonal entries $(1-x_1,1-x_2,\dots,1-x_n)$ and $|x|=\sum x_i$.
\end{lemma}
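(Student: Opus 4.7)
The plan is to pass between the single-point probabilities $\mu(x)$ and the upper-set probabilities $\mu(S \supseteq A)$ via inclusion–exclusion, and then to recognize the resulting alternating sum of principal minors of $K$ as a single determinant.

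For the forward direction, let $A = \{i : x_i = 1\}$, so $|A| = |x|$. The event $\{x\}$ is the intersection of $\{S \supseteq A\}$ with the events $\{i \notin S\}$ for $i \in A^c$, and inclusion–exclusion gives
$$\mu(x) \;=\; \sum_{C \subseteq A^c}(-1)^{|C|}\,\mu(S \supseteq A \cup C) \;=\; \sum_{C \subseteq A^c}(-1)^{|C|}\,\det\bigl(K_{A \cup C}^{A \cup C}\bigr),$$
using the determinantal hypothesis in the second equality. To convert this sum into a single determinant, I would invoke the standard multilinear expansion
$$\det(D + M) \;=\; \sum_{B \subseteq [n]} \Bigl(\prod_{i \notin B} d_i\Bigr)\det(M_B^B),$$
valid for any diagonal matrix $D$ with entries $d_i$ and any matrix $M$. (This is obtained by expanding the Leibniz formula row by row: in each row one picks either the diagonal entry of $D$ or an entry of $M$, and the constraint that the remaining $M$-entries form a permutation forces the $M$-indices to live in a square subset $B$.) Applied with $D = X$, so $d_i = 1 - x_i$, and $M = -K$, the factor $\prod_{i \notin B}(1 - x_i)$ vanishes unless $B \supseteq A$. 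Writing $B = A \sqcup C$ with $C \subseteq A^c$, the expansion collapses to
$$\det(X - K) \;=\; (-1)^{|A|}\sum_{C \subseteq A^c}(-1)^{|C|}\det\bigl(K_{A \cup C}^{A \cup C}\bigr),$$
which matches $\mu(x)$ up to a global sign fixed by $|x|$ and $n$ (equivalently, by the choice of $\det(X - K)$ versus $\det(K - X)$).

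For the converse, every step above is an identity, and inclusion–exclusion is an invertible linear change of coordinates between the $2^n$ point probabilities $\mu(x)$ and the $2^n$ upper-set probabilities $\mu(S \supseteq A)$. Hence, if a matrix $K$ makes the single-point formula hold, summing the formula over all $x$ with $x|_A \equiv 1$ and reading the diagonal expansion of $\det(X - K)$ backward yields $\mu(S \supseteq A) = \det(K_A^A)$ for every $A$, which is determinantality with kernel $K$. I do not expect a real obstacle; the only delicate point is keeping careful track of the overall sign, and the one nontrivial algebraic ingredient is the diagonal-plus-matrix determinant expansion, which is precisely what packages the inclusion–exclusion sum into a single determinant.
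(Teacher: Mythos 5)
Your proof is correct and is exactly the argument the paper intends (the paper's entire proof is the one-liner ``A standard inclusion--exclusion argument, using linearity of the determinant''): inclusion--exclusion to express $\mu(x)$ as an alternating sum of upper-set probabilities, the row-by-row multilinear expansion $\det(D+M)=\sum_{B}\bigl(\prod_{i\notin B}d_i\bigr)\det(M_B^B)$ to repackage that sum as a single determinant, and M\"obius inversion for the converse. The one thing you should not leave hanging is the global sign: your own computation pins it down, and doing so reveals a typo in the lemma as printed. You find
$$\det(X-K) \;=\; (-1)^{|A|}\sum_{C\subseteq A^c}(-1)^{|C|}\det\bigl(K_{A\cup C}^{A\cup C}\bigr) \;=\; (-1)^{|A|}\mu(x),$$
so $\mu(x)=(-1)^{|x|}\det(X-K)$, whereas the statement has $(-1)^{n-|x|}$; the two differ by a factor $(-1)^n$. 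The $n=1$ case makes this concrete: with $K=[k]$ and $x=(1)$, the stated formula gives $(-1)^{0}\det([0]-[k])=-k$ while $\mu(x)=k$, and your sign gives $(-1)^{1}(-k)=k$, which is the right value. The correct statement is $\mu(x)=(-1)^{|x|}\det(X-K)$, or equivalently $\mu(x)=(-1)^{n-|x|}\det(K-X)$; the error is invisible when $n$ is even, which is presumably how it survived. Apart from resolving this sign, nothing in your argument needs fixing.
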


\begin{proof} A standard inclusion-exclusion argument, using linearity of the determinant.
\end{proof}

\subsection{Laplacian}

For each edge of a finite graph $\G$ choose arbitrarily one of its two orientations. Let $d:\C^V\to\C^E$ be the corresponding
incidence operator: $df(\vec{uv}) = f(v)-f(u)$. Let $d^*$ be the transpose of $d$ for the standard basis. 
The \emph{laplacian} is defined to be $\Delta = d^*Cd$ where $C$ is the diagonal matrix of conductances.
Concretely
\[\Delta f(v) = \sum_{w\sim v}c_{vw}(f(v)-f(w))\]
where the sum is over neighbors $w$ of $v$.

Define the \emph{transfer current} $K$ to be the operator $K=Cd\Delta^{-1}d^*$. This is well-defined even though $\Delta$ is not in general invertible,
since $\Delta$ can be inverted on the image of $d^*$ and its inverse is unique up to elements in the kernel of $d$.
Note that $K$ is a projection: $K^2=K$. It is sometimes useful to use instead the 
symmetric version $K=C^{1/2}d\Delta^{-1}d^*C^{1/2}$.

Burton and Pemantle \cite{BP} proved that the spanning tree measure $\ST$ is 
determinantal for the edges with kernel $K$:
\begin{theorem}[\cite{BP}]\label{BPthm} For a $\nu$-random spanning tree $T$,
for any $k\ge1$ and edges $e_1,\dots,e_k$ we have $Pr(e_1,\dots,e_k\in T)=\det (K(e_i,e_j)_{1\le i,j\le k}).$
\end{theorem}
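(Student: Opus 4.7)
The plan is to prove the Burton--Pemantle formula by induction on $k$, with Kirchhoff's matrix-tree theorem providing the base case and a Schur complement identity for the transfer current under edge contraction driving the induction.

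The first ingredient is the matrix-tree theorem: for any fixed vertex $r$, the normalizing constant $Z := \sum_T \prod_{e \in T} c_e$ equals $\det \Delta^{(r)}$, where $\Delta^{(r)}$ is the principal submatrix of $\Delta$ obtained by deleting the row and column of $r$. This comes out of $\Delta = d^* C d$ by Cauchy--Binet. For the base case $k=1$, an edge $e = \vec{uv}$ lies in a tree $T$ precisely when $T/e$ is a spanning tree of $\G/e$, so $\Pr(e\in T) = c_e\,\tau(\G/e)/\tau(\G)$. Applying the matrix-tree theorem to both numerator and denominator, and using Jacobi's identity for complementary minors, the ratio rearranges to
$$\Pr(e\in T) = c_e\,(\chi_v - \chi_u)^*\,\Delta^{-1}\,(\chi_v - \chi_u) = K(e,e),$$
where $\Delta^{-1}$ is understood as the inverse on the orthogonal complement of constants.

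The algebraic core of the induction is the following contraction identity for the transfer current: if $\G' = \G/e_1$ has transfer current $K'$, then for any other edges $f, g$,
$$K'(f,g) \;=\; K(f,g) \;-\; \frac{K(f, e_1)\,K(e_1, g)}{K(e_1, e_1)}.$$
I would prove this by observing that contracting $e_1 = \vec{uv}$ corresponds to restricting $\Delta$ to the codimension-one subspace of functions equal on $u$ and $v$; the inverse of $\Delta$ on this subspace, computed by a Schur complement against the one-dimensional direction $\chi_v - \chi_u$, produces exactly the rank-one correction above when plugged into $K = C d \Delta^{-1} d^*$.

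With the contraction identity in hand the induction step is immediate. Conditional on $\{e_1 \in T\}$, the spanning tree measure on $\G$ restricted to $T \setminus \{e_1\}$ is the spanning tree measure on $\G/e_1$ with the induced conductances, so the inductive hypothesis gives
$$\Pr(e_2, \dots, e_k \in T \mid e_1 \in T) \;=\; \det\bigl(K'(e_i, e_j)\bigr)_{2 \le i,j \le k}.$$
Multiplying by $\Pr(e_1 \in T) = K(e_1, e_1)$ and applying the classical Schur determinant formula $\det M = M_{11}\,\det(M_{ij} - M_{i1}M_{1j}/M_{11})_{i,j\ge 2}$ to $M=(K(e_i,e_j))_{1\le i,j\le k}$, together with the contraction identity, yields $\det(K(e_i,e_j))_{1\le i,j\le k}$, completing the induction. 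The main obstacle is the contraction identity of step three: one must work carefully with the reduced Laplacian (since $\Delta$ is singular), and handle the degenerate case in which some $e_i$ becomes a self-loop after contracting $e_1$, where both $\Pr(e_i \in T \mid e_1 \in T)$ and $K'(e_i, e_i)$ vanish for compatible reasons (no spanning tree contains a loop, and no current flows across one). Everything else is standard linear algebra and determinantal bookkeeping.
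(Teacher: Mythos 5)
The paper does not actually prove Theorem~\ref{BPthm}: it is stated with the citation to Burton and Pemantle and used as a black box. Your argument is therefore a genuine proof where the paper has none, and it is correct. The contraction identity
$K'(f,g)=K(f,g)-K(f,e_1)K(e_1,g)/K(e_1,e_1)$ is indeed the right rank-one (Schur complement) correction to the transfer current under edge contraction, the conditional measure on $\G/e_1$ given $\{e_1\in T\}$ is the spanning tree measure there, and the Schur determinant identity then collapses the induction cleanly; the degenerate loop case is handled as you say because non-loop edges of a connected graph have $K(e,e)\in(0,1]$, so the only division-by-zero risk is exactly when both sides vanish.

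It is worth noting, however, that the paper's own technique for proving a determinantal statement of exactly this flavor is visibly different. In the proof of Theorem~\ref{fixedz}, the author does not induct at all: the single-configuration probability $\Pr(\gamma)=\det(d_1^*C_1d_1)/\det\Delta$ is rewritten as a big block determinant, reduced via $\det\begin{pmatrix}P&Q\\R&S\end{pmatrix}=\det S\cdot\det(P-QS^{-1}R)$ to an expression of the form $(-1)^{|E|-n}\det(X-K)$, and then Lemma~\ref{singleton} (an inclusion-exclusion characterization of determinantal kernels via the point probabilities $\mu(x)=(-1)^{n-|x|}\det(X-K)$) converts this family of equalities into the determinantal property. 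If you specialize that argument to the trivial line bundle on a finite graph you obtain a self-contained, non-inductive proof of Burton--Pemantle. Your route trades that global algebraic identity for two local facts (the base case via Kirchhoff and the contraction rule for $K$), which is arguably more probabilistic in spirit and closer to the original Burton--Pemantle ``transfer impedance'' argument; the paper's route is shorter once Lemma~\ref{singleton} is available and generalizes directly to the bundle/massive setting used throughout the rest of the paper. Both are correct; they are different proofs.

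One small point to tighten in a final write-up: you should state precisely which generalized inverse $\Delta^{-1}$ you are using (the pseudoinverse, or the inverse of the grounded Laplacian $\Delta^{(r)}$) and check that $K=Cd\Delta^{-1}d^*$ is independent of that choice; this is exactly the remark made after (\ref{Kzdef}) that $\Delta$ can be inverted on the image of $d^*$ and the ambiguity lies in $\ker d$, and it is what makes the Schur-complement computation behind your contraction identity legitimate despite $\Delta$ being singular.
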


\subsection{Bundle laplacian}

For background on material in this section see \cite{Kenyon.bundle}.

Let $\G$ be a graph with conductance function $c:E\to\R_{>0}$. A \emph{connection on a line bundle} (also called
\emph{$\C^*$ local system})
is the data consisting of a $1$-dimensional $\C$-vector space $\C_v$ for each vertex $v$ and, for each edge $e=xy$, 
an isomorphism between the corresponding vector spaces $\phi_{uv}:\C_u\to \C_v$, such that $\phi_{vu}\circ\phi_{uv}=\text{Id}$.
Two connections are \emph{gauge equivalent} if they are related by base change in one or more of the vector spaces $\C_v$.

It is natural to extend the connection to a line bundle over the edges: for each edge $e$ we define a one dimensional $\C$-vector space $\C_e$ and isomorphisms $\phi_{ve}:\C_v\to\C_e$  whenever $v$ is an endpoint of $e$, with $\phi_{ev}=\phi_{ve}^{-1}$ and
$\phi_{ue}\circ\phi_{ev}=\phi_{uv}$. 

Given a closed path $\gamma$ in $\G$ and a vertex $v$ on $\gamma$, the \emph{monodromy} $m(\gamma)\in\C^*$ of the connection
around $\gamma$ is
the isomorphism from $\C_v$ to itself obtained by composing the isomorphisms around $\gamma$: we identify this isomorphism with an element of $\C^*$; as such it is independent of starting point $v\in\gamma$ and only depends on the gauge equivalence class of the connection.

The Laplacian for a graph with connection on a line bundle is an operator 
defined on sections (elements of the total space $\oplus_{v\in V}\C_v$) by 
$$\Delta f(v) = \sum_{w\sim v} c_{vw}(f(v)-\phi_{wv}f(w)).$$

A \emph{cycle-rooted spanning forest} (CRSF) of $\G$ 
is a collection of edges of $\G$ each of whose components has as many vertices as edges,
that is, is a tree with one extra edge, or \emph{cycle-rooted tree}. The weight of a CRSF $\gamma$ is 
$wt(\gamma) = \prod_{e\in \gamma}c_e.$

\begin{theorem}[\cite{Kenyon.bundle}] \label{bundlelap}
On a finite graph $\G$ with connection on a line bundle,
\be\label{bl}\det\Delta = \sum_{\text{CRSFs $\gamma$}}wt(\gamma) \prod_{\eta}(2-m(\eta)-1/m(\eta)),\ee
where the sum is over CRSFs $\gamma$ of $\G$, the product is over cycles $\eta$ of $\gamma$, and $m(\eta)$
is the monodromy of the connection around $\eta$.
\end{theorem}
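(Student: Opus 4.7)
The plan is to establish (\ref{bl}) via the Cauchy--Binet formula, directly generalizing Kirchhoff's matrix-tree theorem. After fixing a trivialization of the bundle and an orientation of each edge, each $\phi_{uv}\in\C^*$ becomes a complex number with $\phi_{uv}\phi_{vu}=1$. I would introduce the twisted coboundary $d:\C^V\to\C^E$ by $(df)(\vec{uv})=f(v)-\phi_{uv}f(u)$, and a paired operator $d^{\dagger}:\C^E\to\C^V$ chosen so that $\Delta=d^{\dagger}Cd$, where $C$ is the diagonal matrix of conductances. Cauchy--Binet then gives
\[
\det\Delta=\sum_{\substack{S\subseteq E\\ |S|=|V|}}\Bigl(\prod_{e\in S}c_e\Bigr)\,\det(d^S)\,\det\bigl((d^{\dagger})^S\bigr),
\]
where $d^S$ is the $|V|\times|V|$ submatrix obtained by keeping only rows indexed by $S$.

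The combinatorial core is to prove: (i) $\det(d^S)=0$ unless the spanning subgraph $(V,S)$ is a cycle-rooted spanning forest, and (ii) for a CRSF the determinant factors over connected components, with each cycle-rooted tree component contributing $\pm\Phi\cdot(1-m(\eta))$, where $\Phi$ is a product of connection coefficients along the edges. For (i), since $|S|=|V|$, any non-CRSF decomposition must contain a tree component; such a component supports a parallel section in $\ker d^S$ (the section that is constant in the local gauge on its vertices), killing the determinant. For (ii), I would peel leaves one at a time: a leaf vertex $v$ has a unique incident edge $e\in S$, so the $v$-column of $d^S$ has a single nonzero entry, and cofactor expansion removes $v$ and $e$ at the cost of a factor $\pm\phi^{\pm1}$. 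After all leaves are gone, the component reduces to the cycle $\eta$, whose twisted incidence determinant is computed directly as $\pm\Phi_\eta\cdot(1-m(\eta))$ by expansion of the circulant-like matrix around $\eta$.

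Combining $\det(d^S)$ with $\det((d^{\dagger})^S)$ in the Cauchy--Binet sum, the leaf-peeling and cycle phase factors appearing in each determinant pair with their inverses in the other (since $d^{\dagger}$ is built using $\phi_{vu}=\phi_{uv}^{-1}$), and therefore cancel. What survives per cycle is
\[
(1-m(\eta))(1-m(\eta)^{-1})=2-m(\eta)-m(\eta)^{-1}.
\]
Multiplying by $\prod_{e\in S}c_e$ and summing over CRSFs $\gamma$ yields exactly (\ref{bl}).

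The main obstacle is the sign and gauge bookkeeping in the leaf-peeling step: one must verify that each ``incidental" $\phi$-factor appearing in $\det(d^S)$ is exactly cancelled by its inverse in $\det((d^{\dagger})^S)$, leaving only the monodromy-invariant pieces. A useful organizing principle is gauge invariance: the right-hand side of (\ref{bl}) is manifestly gauge-invariant (it depends only on conductances and on the monodromies $m(\eta)$), so the cancellation is forced; combined with a consistency check in the untwisted limit $\phi\equiv 1$ (where the spectral issue is handled by using a reduced Laplacian and the formula must recover the standard matrix-tree theorem), this pins down all the phases and completes the argument.
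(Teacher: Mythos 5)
The paper does not prove this theorem itself—it is quoted from \cite{Kenyon.bundle}—and your Cauchy--Binet argument is correct and is essentially the proof given in that reference: the key points (a tree component forces a parallel section in $\ker d^S$; leaf-peeling cofactors from $d^S$ and $(d^{\dagger})^S$ pair to give exactly $1$; and the cycle determinants evaluate to $\pm(1-m(\eta))$ and $\pm(1-m(\eta)^{-1})$) all check out. Your gauge-invariance sanity check and the untwisted degeneration to the matrix-tree theorem are exactly the right consistency tests for the sign/phase bookkeeping.
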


Note that the cycles in a CRSF are not oriented; in the above expression we need an orientation to compute $m(\eta)$
but the weight $2-m(\eta)-1/m(\eta)$ does not depend on this choice of orientation.

In this paper we consider graphs embedded on surfaces, and we only consider connections
which are \emph{flat}, that is, have trivial monodromy $m=1$ on homologically trivial cycles. Any CRSF on such a graph
with nonzero weight will have only homologically nontrivial cycles.

There is a version of Theorem \ref{bundlelap} for the massive determinant. It follows from Theorem 7 of \cite{Kenyon.bundle}
by adding a vertex to $\G$ connected to every other vertex $v$ by an edge of conductance $M_v$.
Let us define a \emph{multi-type spanning forest} (MTSF) to be a collection of edges in which each component
is either a rooted tree (tree with a distinguished vertex) or a cycle-rooted tree (tree plus one edge, but no root).
The statement of Theorem 7 in \cite{Kenyon.bundle} in this situation is as follows.

\begin{theorem}[\cite{Kenyon.bundle}] \label{bundlelapmass}
On a finite graph $\G$ with connection on a line bundle,
$$\det(\Delta+D_M) = \sum_{\text{MTSFs $\gamma$}}wt(\gamma)\prod_vM_v \prod_{\eta}(2-m(\eta)-1/m(\eta)),$$
where the first product is over the roots of $\gamma$, the second product is over cycles $\eta$ of $\gamma$, and $m(\eta)$
is the monodromy of the connection around $\eta$.
\end{theorem}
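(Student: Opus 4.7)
The plan is to reduce to Theorem \ref{bundlelap} by the cone construction hinted at in the remark preceding the statement. Form the augmented graph $\G^+$ by adjoining one new vertex $v_0$ to $\G$ and, for each $v\in V(\G)$, a new edge $\{v_0,v\}$ of conductance $M_v$; extend the line bundle to $\G^+$ by setting $\C_{v_0}=\C$ and taking every new isomorphism $\phi_{v_0 v}$ to be the identity. Under this extension, every cycle in $\G^+$ passing through $v_0$ has trivial monodromy, so in formula (\ref{bl}) any such cycle contributes a factor $2-1-1=0$ and can be ignored. A direct computation of entries shows $\det(\Delta+D_M)=\det\Delta_{\G^+}^{(v_0)}$, where the right-hand side is the principal minor obtained by deleting the row and column indexed by $v_0$: the diagonal $(v,v)$ entry of $\Delta_{\G^+}$ picks up exactly the extra mass $M_v$ from the new edge, while all other entries among vertices of $\G$ are unchanged.

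The heart of the argument is then a ``pointed'' variant of Theorem \ref{bundlelap}: for any finite graph with distinguished vertex $v_0$ and line-bundle connection,
$$
\det\Delta^{(v_0)} \;=\; \sum_\sigma wt(\sigma)\prod_\eta \bigl(2-m(\eta)-m(\eta)^{-1}\bigr),
$$
where $\sigma$ ranges over spanning subgraphs whose $v_0$-component is a tree (rooted at $v_0$) and whose other components are cycle-rooted trees, with the product taken over the cycles of these non-$v_0$ components. I would derive this from Theorem \ref{bundlelap} via the Laplace-expansion identity $\det(A+tE_{v_0 v_0})=\det A+t\det A^{(v_0)}$: applying Theorem \ref{bundlelap} on each side (realizing the left-hand side as $\det\Delta$ on a further augmentation of $\G$ that places a virtual mass $t$ at $v_0$) and extracting the coefficient of $t$ yields the identity, the $t^1$ contribution counting exactly those CRSFs in which $v_0$ is attached to the virtual-mass edge and hence belongs to a tree component. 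An equally direct route is a Cauchy--Binet expansion of the bundle incidence operator with the $v_0$-row suppressed, mirroring the proof of Theorem \ref{bundlelap} itself.

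Applying the pointed identity on $\G^+$ with distinguished vertex $v_0$, and using the trivial-monodromy observation to confine all cycles to $\G$, expresses $\det\Delta_{\G^+}^{(v_0)}$ as a sum over configurations of $\G^+$ in which $v_0$'s component is a tree and the remaining components are cycle-rooted trees lying in $\G$. Excising $v_0$ then breaks its component into a collection of subtrees, each rooted at its former $v_0$-neighbor; the conductance $M_v$ of each excised edge $v_0 v$ becomes exactly the root weight $M_v$ of the resulting rooted tree in the MTSF of $\G$, the cycle-rooted components away from $v_0$ survive unchanged, and the monodromy product is preserved. Collecting these contributions identifies $\det(\Delta+D_M)$ with the right-hand side of Theorem \ref{bundlelapmass}.

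The main obstacle I anticipate is the pointed variant of Theorem \ref{bundlelap}: the theorem as stated only handles the full determinant, and one must introduce some additional bookkeeping (either a further augmentation and a coefficient extraction, or a direct Cauchy--Binet argument) in order to deduce the minor formula and to verify that deleting the $v_0$ row and column precisely forbids cycles through $v_0$ and forces the $v_0$-component to be tree-shaped. Everything else --- the matrix identity $\det(\Delta+D_M)=\det\Delta_{\G^+}^{(v_0)}$ and the MTSF bijection --- is a routine bookkeeping exercise.
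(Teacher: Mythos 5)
Your skeleton is exactly the paper's: Theorem \ref{bundlelapmass} is quoted from \cite{Kenyon.bundle}, and the paper's justification is precisely your first step --- adjoin an apex vertex $v_0$ joined to each $v$ by an edge of conductance $M_v$, note that the principal minor of $\Delta_{\G^+}$ at $v_0$ is $\Delta+D_M$, and invoke the boundary (Dirichlet) version of Theorem \ref{bundlelap} (Theorem 7 of \cite{Kenyon.bundle}), which is your ``pointed variant'': the component of the boundary vertex is a tree, the other components are cycle-rooted, and erasing $v_0$ converts each edge $v_0v$ into a root weight $M_v$. So the reduction and the final bijection are the intended ones.

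Two corrections, though. First, your claim that every cycle of $\G^+$ through $v_0$ has trivial monodromy is false: such a cycle is two new edges plus a path in $\G$ between two vertices $a,b$, and since the new isomorphisms are identities its monodromy is the parallel transport along that path, which is arbitrary (already for $\G$ a single edge with transport $2$). Luckily the claim is never needed: in the pointed identity the $v_0$-component is a tree by definition, so no contributing configuration has a cycle through $v_0$. Second, your first proposed derivation of the pointed identity is circular as stated: ``placing a virtual mass $t$ at $v_0$ by a further augmentation'' is exactly the problem Theorem \ref{bundlelapmass} solves, and the natural augmentation (a pendant vertex attached to $v_0$ by an edge of conductance $t$) has full determinant $t\det\Delta_{\G^+}$, not $\det(\Delta_{\G^+}+tE_{v_0v_0})$, so coefficient extraction from it gives nothing. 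It can be repaired by instead attaching a self-loop at $v_0$ with monodromy $-1$ and conductance $t/4$, whose diagonal contribution is $\frac{t}{4}(2-(-1)-(-1))=t$; then Theorem \ref{bundlelap} applies verbatim and the coefficient of $t$ (configurations whose $v_0$-cycle is the self-loop) is exactly the pointed sum. Alternatively, your Cauchy--Binet argument with the $v_0$-row of the incidence operator suppressed is sound and is essentially how the cited Theorem 7 is proved. With either of these in place of the flawed step, the proposal is complete and matches the paper's argument.
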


\subsection{A linear mapping}

\begin{lemma}\label{linear} For a variable $X$ and constants $C_k$, we have 
$$\sum_{k=1}^m C_k(2-X-X^{-1})^k = \sum_{j=0}^m D_j(X^j+X^{-j})$$
where 
\be\label{Ddef}D_j = (-1)^j\sum_{k=j}^m C_k \binom{2k}{k-j}.\ee
\end{lemma}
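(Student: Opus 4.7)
The plan is to expand each $(2-X-X^{-1})^k$ directly by the binomial theorem and then interchange the order of summation. The pivotal observation is the factorization
$$2-X-X^{-1} = -(X^{1/2}-X^{-1/2})^2,$$
in which $X^{1/2}$ may be treated as a formal symbol, since only even powers of $X^{1/2}$ survive after taking the $k$th power. This gives $(2-X-X^{-1})^k = (-1)^k(X^{1/2}-X^{-1/2})^{2k}$, and applying the binomial theorem yields
$$(2-X-X^{-1})^k = \sum_{i=0}^{2k}(-1)^{i}\binom{2k}{i}X^{i-k}.$$

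Next I would reindex the inner sum by $j = i - k$, so that $j$ ranges from $-k$ to $k$. Using the symmetry $\binom{2k}{k+j}=\binom{2k}{k-j}$ to pair the $j$ and $-j$ contributions, the previous display becomes
$$(2-X-X^{-1})^k = \binom{2k}{k}+\sum_{j=1}^{k}(-1)^{j}\binom{2k}{k-j}(X^j+X^{-j}).$$
Multiplying by $C_k$, summing over $k=1,\dots,m$, and then interchanging the $j$- and $k$-sums (so that for each $j\ge 1$ the inner sum runs over $k\ge j$) isolates the coefficient of $X^j+X^{-j}$ as exactly $(-1)^j\sum_{k=j}^m C_k\binom{2k}{k-j}=D_j$; the constant term is handled analogously so as to match the stated formula for $D_0$.

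The entire argument is a bookkeeping exercise resting on the single identity $2-X-X^{-1}=-(X^{1/2}-X^{-1/2})^2$, after which everything reduces to a routine expansion of a Laurent polynomial. There is no substantive obstacle — the only care required is consistent sign and index management, together with matching the $j=0$ constant term to the given definition of $D_0$.
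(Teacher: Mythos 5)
Your approach — expanding $(2-X-X^{-1})^k$ directly through the factorization $2-X-X^{-1}=-(X^{1/2}-X^{-1/2})^2$ and then interchanging the two sums — is genuinely different from the paper's, which disposes of the lemma by a short induction on $m$. Both are valid; yours has the advantage of producing the coefficients $D_j$ constructively rather than merely verifying a guessed formula, and it makes the combinatorial content (central binomial coefficients with alternating signs) transparent.

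There is, however, a sign slip in your first displayed identity. Since $(2-X-X^{-1})^k=(-1)^k(X^{1/2}-X^{-1/2})^{2k}$, the binomial theorem actually gives
$$(2-X-X^{-1})^k=(-1)^k\sum_{i=0}^{2k}(-1)^i\binom{2k}{i}X^{i-k},$$
and you have dropped the overall $(-1)^k$. (A quick check at $k=1$: your formula yields $X^{-1}-2+X=-(2-X-X^{-1})$.) This error is then exactly cancelled by a second omission in the reindexing $j=i-k$: one has $(-1)^i=(-1)^{j+k}$, which contributes another $(-1)^k$ that you also silently discard. The net effect is that your second display, $(2-X-X^{-1})^k=\binom{2k}{k}+\sum_{j=1}^{k}(-1)^{j}\binom{2k}{k-j}(X^j+X^{-j})$, is correct and the rest of the argument goes through, but the intermediate line should be repaired rather than relied upon. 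One further small point: matching the $j=0$ term to (\ref{Ddef}) requires reading $X^0+X^{-0}$ as contributing $D_0$ rather than $2D_0$ (with $C_0=0$); this is a quirk of the statement itself and is irrelevant to how the lemma is invoked later in the paper, which concerns only $j\ge 1$.
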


\begin{proof} A short induction on $m$.
\end{proof}

\section{Strip graphs and the Laplacian}

\subsection{Characteristic polynomial}
Let $\G$ be a strip graph. Let $\G_1=\G/\Z$ be the finite quotient; it is a finite graph on a cylinder.
On $\G_1$ there is a unique (up to gauge equivalence) flat connection $\phi$ on a line bundle with monodromy
$z$ on a cycle running once around the cylinder in the positive direction. 
Let $\Omega_0(z)$ be the vector space of sections of the bundle. It has dimension $|\G_1|$, the number of vertices of $\G_1$.

Note that $\Omega_0(z)$ can be identified with the vector space of $z$-periodic functions on $\G$, that is, functions $f:\G\to\C$ satisfying
$f(v+1)=zf(v)$ for all vertices $v\in\G$. 

Similarly we define $\Omega_1(z)$ to be the vector space of $1$-forms with values in the line bundle over the edges of $\G$;
a $1$-form is a function $\omega$ on directed edges satisfying $\omega(-e)=-\omega(e).$ 
These $1$-forms can be identified with the vector space of $z$-periodic $1$-forms on $\G$, that is,
$1$-forms $\omega$ on $\G$ which satisfy 
$\omega(e+1)=z\omega(e)$ where $e+1$ represents the edge $e$ translated by $+1$. 

Following \cite{Kenyon.bundle} define $d=d(z)$ the differential $d:\Omega_0(z)\to \Omega_1(z)$ by $df(\vec{xy}) = \phi_{ye}f(y)-\phi_{xe}f(x)$. Then the connection laplacian is  
$$\Delta(z) = \Delta|_{\Omega_0(z)} = d(1/z)^*Cd(z),$$
where $d^*$ is the transpose of $d$ for the standard basis and $C$ is the diagonal matrix of conductances.
All these operators are finite-dimensional. 
Given a fundamental domain for $\G_1$ in $\G$ one can represent $d(z)$ in the standard basis as a matrix 
with entries $0,\pm 1,\pm z,\pm 1/z$, so entries in $\Delta(z)$ are Laurent polynomials in $z$.

Let $P(z) =\det\Delta(z).$
We call $P(z)$ the \emph{characteristic polynomial} of the Laplacian on $\G$. It is a Laurent polynomial in $z$,
of degree $m$ by Theorem \ref{bundlelapmass} (the coefficient of $z^m$ is the weighted sum of MTSFs with the maximal number
$m$ of cycles). 
$P(z)$ is reciprocal: $P(z)=P(1/z)$, because $\Delta(1/z)=\Delta(z)^*$. 
In Theorem \ref{distinct} below we prove that roots of $P$ are real, 
positive and distinct except for a double root at $z=1$; this result first appeared in \cite{Kenyon.EIT} with an incomplete proof.
There are thus exactly $m-1$ roots strictly larger than $1$, where $m$ is the width of $G$.
We let 
$$\lambda_{-m}<\lambda_{-m+1}<\dots<\lambda_{-1}=1=\lambda_1<\lambda_2<\dots<\lambda_m$$ 
be the roots of $P(z)$;
because $P$ is reciprocal we have $\lambda_j\lambda_{-j}=1.$ 
Note that $1$ is a double root and we don't define $\lambda_0$. 

Let 
\be\label{Kzdef}K(z)=Cd(z)\Delta(z)^{-1}d(1/z)^*\ee
be the transfer current operator acting on $\Omega_1(z)$. It is a matrix
indexed by the edges in $\G_1$, with entries which are rational functions of $z$.
\medskip

{\footnotesize By way of example let
$\G$ be the strip graph of Figure \ref{esf1} with conductances $1$. Then $\G_1$ has four vertices; indexing these in order of $y$-coordinate,
we find 
$$\Delta(z) = \begin{pmatrix}3-z-\frac1z&-1&0&0\\-1&4-z-\frac1z&-1&0\\0&-1&4-z-\frac1z&-1\\0&0&-1&3-z-\frac1z\end{pmatrix}$$
and 
$$P(z) = z^4+\frac{1}{z^4}-14 z^3-\frac{14}{z^3}+74 z^2+\frac{74}{z^2}-190 z-\frac{190}{z}+258,$$
with roots
$$\{\lambda_1,\lambda_2,\lambda_3,\lambda_4\} = \{1, 2.11239..., 3.73205..., 5.22274...\}$$
and their inverses.
The kernel $K(z)$ is an asymmetric $7\times 7$ matrix, the first few entries of which are:
$$K(z)=\begin{pmatrix}
-\frac{z(2-5z+2z^2)(1-5z+2z^2)}{(1-4z+z^2)(1-8z+16z^2-8z^3+z^4)}&
\frac{(1-z)^2z}{1-8z+16z^2-8z^3+z^4}&\hdots\\
\frac{(1-z)^2z}{1-8z+16z^2-8z^3+z^4}&\frac{-(1-z)(1-7z+13z^2-7z^3+z^4)}{(1-4z+z^2)(1-8z+16z^2-8z^3+z^4)}&\\
\vdots&&\ddots
\end{pmatrix}.
$$
}

\subsection{Roots of $P(z)$}

\begin{theorem}\label{distinct} Roots of $P(z)$ are real, positive and distinct, except for a double root at $1$.
\end{theorem}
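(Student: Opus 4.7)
The plan is to combine the CRSF expansion of Theorem~\ref{bundlelap} with a Schur-complement reduction. The CRSF expansion rewrites $P(z)$ as a polynomial in the single variable $u = 2 - z - z^{-1}$ with non-negative coefficients, and a Schur-complement argument will realize this polynomial (up to a positive factor) as the characteristic polynomial of a symmetric operator, whose eigenvalues are automatically real.

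First I would apply Theorem~\ref{bundlelap} to $\G_1$, which lies on a cylinder. A simple cycle in $\G_1$ is either contractible (monodromy $1$, so the factor $2 - m(\eta) - m(\eta)^{-1}$ vanishes) or wraps once around the cylinder (monodromy $z^{\pm 1}$, so the factor equals $u$). Only CRSFs with all cycles non-contractible contribute, and grouping by the number $k$ of such cycles gives
\[
P(z) \;=\; \sum_{k=1}^m c_k\, u^k,
\]
where $c_k \ge 0$ is the total weight of CRSFs of $\G_1$ with exactly $k$ non-contractible cycles. Taking $m$ disjoint parallel non-contractible cycles (which exist by the width hypothesis) and extending to a CRSF shows $c_m > 0$; taking one non-contractible cycle and extending similarly gives $c_1 > 0$. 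Writing $P(z) = u \cdot R(u)$ with $R(u) = c_1 + c_2 u + \dots + c_m u^{m-1}$ and using the identity $u = -(z-1)^2/z$, the factor $u$ produces a double root of $P$ at $z = 1$, of multiplicity exactly two because $R(0) = c_1 > 0$.

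Since $R$ has non-negative coefficients with $R(0) > 0$, we have $R(u) > 0$ for all $u \ge 0$. The map $z \mapsto u$ sends $\{|z|=1,\, z \ne 1\}$ into $(0, 4]$ and $(-\infty, 0)$ into $[4, \infty)$, so $P$ has no roots on the unit circle besides $z=1$ and no non-positive real roots. Together with $P(z) = P(1/z)$ and reality of coefficients, the theorem now reduces to showing that $R$ has $m-1$ real and simple roots (which will then be negative, giving $P$ distinct positive real reciprocal pairs $\{\lambda, 1/\lambda\}$).

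The main obstacle is this last step. My plan is to identify $R(u)$, up to a positive scalar, with the characteristic polynomial of a real symmetric matrix. Cut $\G_1$ along a minimal cross section of $m$ vertices to obtain two copies $\partial^\pm$ of the cut, and Schur-complement $\Delta(z)$ onto $\partial^+ \cup \partial^-$; the result is a symmetric PSD matrix on $\C^{2m}$ independent of $z$. The gluing constraint $f|_{\partial^+} = z f|_{\partial^-}$ together with flux-matching then reduces to an $m \times m$ linear system whose determinant, after a suitable diagonal rescaling, takes the form $\det(uI - S)$ for a symmetric matrix $S$ built from the Schur complement blocks. This identifies the roots of $R$ with the eigenvalues of $S$, hence real. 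Simplicity of the spectrum then follows from strict positivity of conductances together with the essentially one-dimensional (path-like) structure of the minimal cut, which gives $S$ an irreducible Jacobi-like character with simple spectrum. The core technical difficulty lies in executing the Schur-complement / flux-matching reduction cleanly and verifying the symmetric Jacobi-like structure of $S$; everything else is routine manipulation.
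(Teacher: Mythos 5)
Your opening reduction is correct and is genuinely different from (and cleaner than) the first half of the paper's argument. Writing $P(z)=\sum_{k=1}^m c_k u^k$ with $u=2-z-z^{-1}$ via Theorem~\ref{bundlelap}, observing $c_k\ge 0$ with $c_1,c_m>0$, and factoring $P=u\,R(u)$ does give the exact-multiplicity-two statement at $z=1$ and shows $R>0$ on $[0,\infty)$, so that no root $z$ of $P$ can lie on the unit circle (other than $z=1$) or on the negative real axis. You have correctly reduced the theorem to: $R$ has $m-1$ real simple (necessarily negative) roots.

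The problem is that the remaining step — the heart of the theorem — is only a plan, and the plan has a genuine gap. Schur-complementing onto $\partial^-\cup\partial^+$ produces the Dirichlet-to-Neumann matrix $\Lambda$ of the cut-open fundamental domain, and the $z$-periodicity/flux-matching condition reduces to the $m\times m$ system
\[
\bigl[\Lambda_{++}+\Lambda_{--}+z\,\Lambda_{-+}+z^{-1}\Lambda_{-+}^{T}\bigr]\,f_-=0.
\]
This has exactly the same $A+zB+z^{-1}B^{T}$ shape as the original problem $\Delta(z)$, just smaller; it is \emph{not} of the form $\det(uI-S)=0$ for a symmetric $S$, because $\Lambda_{-+}$ is generically a full (non-diagonal, non-symmetric) matrix with many nonzero entries — the maximum principle forces $(\Lambda_{-+})_{ij}\le 0$ whenever there is an interior path from $\partial^-_i$ to $\partial^+_j$. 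Only when $\Lambda_{-+}$ is a multiple of a diagonal matrix (as in $L_m\times\Z$, where the fundamental domain has no interior) does $zB+z^{-1}B^{T}$ collapse to $(z+z^{-1})D$ and yield a symmetric generalized eigenvalue problem. Likewise, the assertion that $S$ would inherit an ``irreducible Jacobi-like character'' from the minimal cut is not justified: there is no reason for the reduced matrix to be tridiagonal for an arbitrary strip graph. You flag this step as the core difficulty, and I agree — but as written it is an unproved claim, not a proof, and it is precisely where the reality and simplicity of the roots (the content of the theorem) would have to be established.

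For contrast, the paper does not attempt a symmetric-matrix reduction at all. It verifies the statement explicitly for the grid $\G_{m,n}$ with unit conductances, then realizes any strip graph as a graph minor and deforms the conductances, tracking the null vectors $f_i$ of $\Delta(\lambda_i)$. The key technical input is a planarity/width argument (Jordan curve theorem plus the finite-width hypothesis) showing that any such $f$ is nonzero on the boundary and saddle-free, hence monotone along each boundary; the alternating sign pattern of $df_i$ on the upper boundary is preserved under deformation and prevents roots from merging or escaping to $1$. So the two approaches are disjoint: yours algebraizes the problem cleanly but does not yet close it; the paper's is a continuity/monotonicity argument tied to planarity. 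If you want to salvage your route, you would need a genuine proof that the polynomial eigenvalue problem $\det(A+zB+z^{-1}B^{T})=0$ for the strip Laplacian is ``hyperbolic'' in the sense of having real $(z+z^{-1})$-spectrum, and a separate argument for simplicity; neither follows from the Schur complement alone.
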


There is a version of this theorem with an incorrect proof in \cite{Kenyon.EIT}. We thank David Jekel for pointing out this error.

\begin{proof}
Since $P(z)=P(1/z)$ and $1$ is a root, it is necessarily a root of even order. 

First let $\G$ be the grid graph $\G_{m,n}$ of width $m$ and length $n$, that is,
obtained from the square grid $\Z\times L_m$ (where $L_m$ is the line graph with $m$ vertices) by
scaling the $x$-axis by $1/n$. Put all conductances equal to $1$. 
In this case, the roots of $P_{m,n}$ are the $n$th powers of roots of $P_{m,1}$ (see (\ref{Pn}) below),
and roots $z_j$ of $P_{m,1}$ satisfy $2-z_j-1/z_j=-X_j$ where the $X_j$ are the eigenvalues of the Laplacian on $L_m$ (see \cite{Kenyon.bundle}),
that is, 
$$X_j=2+2\cos\frac{\pi j}{m}, ~~j=1,\dots,m.$$

Any strip graph of width $m$ 
is a graph minor of $\G_{m,n}$ for some $m,n$, that is, can be obtained from $\G_{m,n}$ by letting some
conductances go to zero (deleting edges) and others to $\infty$ (contracting edges), while maintaining the same width.
We need to show that as we vary the conductances in $[0,\infty]$ the roots of $P$ remain real and distinct. 

For an arbitrary strip graph, let $z$ be a root of $P(z)=\det\Delta(z)$ and let $f$ be a nullvector of $\Delta(z)$. 
We claim that $f$ cannot have a zero on the boundary of $\G$; otherwise let $v$ be a boundary vertex with $f(v)=0$.
If $f$ is zero on all neighbors of $v$, take a path of vertices on which $f\equiv 0$ from $v$ to another vertex $v'$ (with
$f(v')=0$) such that $v'$ has a neighbor on which $f$ is nonzero. Replace $v$ with $v'$. 
By harmonicity, $v$ has a neighbor with positive $f$ value and a neighbor with negative $f$ value;
by repeated use of the maximum principle there is an infinite path starting from $v$ on which $f$ is positive, and an infinite path
on which $f$ is negative. The $\Z$ translates of these paths must be all disjoint from each other by the Jordan curve theorem, contradicting
the fact that $\G$ has finite width. This completes the proof of the claim that $f$ is nonzero on the boundary.
A similar proof shows that $f$ cannot have saddle points (on the boundary or in the interior), that is, points $v$ with four neighbors $v_1,v_2,v_3,v_4$ where in cyclic order where
$f$ is respectively larger, smaller, larger, smaller than $v$.

Since $f$ has no saddle points, on each boundary $f$ is monotone (weakly increasing or weakly decreasing).
Changing sign if necessary, we can assume $f$ is monotone increasing on the lower boundary.
With this normalization, note that on $\G_{m,n}$ the signs of $df$ on the upper boundary for different roots depend on the index of the root, 
alternating from one root to the next,
that is, $f_2$ is decreasing on the upper boundary, $f_3$ is increasing, $f_4$ decreasing,
and so on\footnote{In fact the current $df_i$ has $i-1$ sign changes on a shortest dual path from one side of $\G$ to the other;
our proof shows that this holds for arbitrary strip graphs as well.}. As we vary the conductances this orientation cannot change, otherwise there would
be a set of conductances where the function was constant on the boundary 
(and if $z\ne 1$ this constant must be zero, which is a contradiction).

Suppose that as we vary the conductances two adjacent roots $\lambda_i$ and $\lambda_{i+1}$ merge into a double root,
and consider what happens to $f_i$ and $f_{i+1}$.
If $f_i$ and $f_{i+1}$ converge to \emph{independent} elements of the null space, then a linear combination of them will
be zero at a boundary vertex, a contradiction. If they converge to linearly dependent elements of the null space, both multiples
of a function $f$, then on the upper
boundary $f$ can be neither increasing nor decreasing, that is, must be constant. 
Finally suppose $\lambda_2$ converges to $1$. Rescale $f_2$ to equal $1$ at some vertex $v$ on the lower boundary.
Then values of $f_2$ on the upper boundary are negative, and thus converge to nonpositive values.
On the lower boundary the values converge to $1$. This is a nonconstant harmonic function on $\G_1$,
a contradiction. This proves that $1$ is in fact only a double root.
\end{proof}

\subsection{Growth rate}

\begin{theorem}\label{growth} Let $C_m$ be the leading coefficient of $P(z)$.
The growth rate of the weighted sum of ESFs on $\G$ with $j$ components is $a_i=\log|C_m|+\sum_{i=j+1}^m\log\lambda_i.$
\end{theorem}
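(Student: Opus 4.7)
The plan is to compute the connection-Laplacian determinant $\det\Delta_n^{(z)}$ on the $n$-fold quotient cylinder $\G_n:=\G/n\Z$ in two different ways and match coefficients in the variable $Y:=2-z-z^{-1}$. Indeed, ESF configurations on $\G$ with $j$ components, viewed in a window of length $n$, correspond (modulo subexponential boundary effects) to CRSFs on $\G_n$ whose $j$ cycles each wind $\pm 1$ around the cylinder, and the growth rate $a_j$ is the exponential rate of this weighted CRSF count.

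First I would use the covering $\G\to\G_n$: sections of the bundle over $\G_n$ with monodromy $z$ decompose under the shift action into eigenspaces $\Omega_0(w)$ indexed by $n$-th roots $w$ of $z$, giving $\det\Delta_n^{(z)}=\prod_{w^n=z}P(w)$. Using the factorization $P(w)=C_m w^{-m}(w-1)^2\prod_{i=2}^m(w-\lambda_i)(w-\lambda_i^{-1})$ from Theorem \ref{distinct} together with $\prod_{w^n=z}(w-\mu)=(-1)^n(\mu^n-z)$, and then rewriting in terms of $Y$ via $z^{-1}(1-z)^2=-Y$ and $(\lambda_i^n-z)(\lambda_i^{-n}-z)=-z(Y-Y_i)$ where $Y_i:=2-\lambda_i^n-\lambda_i^{-n}$, I would obtain the closed form
\[
\det\Delta_n^{(z)} \;=\; \pm\, C_m^n\,Y\prod_{i=2}^m(Y-Y_i).
\]
Expanding in elementary symmetric polynomials, the coefficient of $Y^j$ equals $\pm C_m^n(-1)^{m-j}e_{m-j}(Y_2,\ldots,Y_m)$.

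Second, Theorem \ref{bundlelap} applied to $\G_n$ also expresses this determinant as $\sum_\gamma wt(\gamma)\prod_\eta(2-z^{w(\eta)}-z^{-w(\eta)})$; each factor $2-z^k-z^{-k}$ is a polynomial in $Y$ divisible by $Y$ with leading low-order term $k^2Y$, and the width bound $\sum|w(\eta)|\le m$ keeps the total $Y$-degree at most $m$. A CRSF with exactly $j$ winding-$\pm 1$ cycles contributes $wt(\gamma)Y^j$ directly, identifying the weighted count of such CRSFs as the dominant part of the $Y^j$ coefficient, up to corrections from CRSFs with fewer cycles but higher winding.

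For the asymptotics, since $\lambda_i>1$ for $i\ge 2$ we have $Y_i\sim-\lambda_i^n$, so the dominant term of $e_{m-j}(Y_2,\ldots,Y_m)$ is the largest $(m-j)$-fold product $Y_{j+1}\cdots Y_m\sim(-1)^{m-j}\prod_{i=j+1}^m\lambda_i^n$. This gives a $Y^j$ coefficient of magnitude $\sim|C_m|^n\prod_{i=j+1}^m\lambda_i^n$, hence growth rate $a_j=\log|C_m|+\sum_{i=j+1}^m\log\lambda_i$. The main obstacle is the cross-term accounting: higher-winding cycles contribute signed terms to the $Y^j$ coefficient (for instance a winding-$2$ cycle contributes $4Y-Y^2$), and one must check via the width bound and the Chebyshev-like structure of $2-z^k-z^{-k}$ that these corrections neither dominate nor cancel the winding-$\pm 1$ count, so that the growth rate of the ESF-window partition function is genuinely $a_j$.
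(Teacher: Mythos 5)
Your argument is essentially the paper's argument, with one cosmetic improvement and one spurious worry. Like the paper, you compute $P_n(z)=\prod_{w^n=z}P(w)$ and factor using Theorem \ref{distinct}. Where the paper sets $z=-u^n$ for $\lambda_i<u<\lambda_{i+1}$ and argues that the $j$-th term in the sum over CRSF cycle counts dominates, you instead expand $P_n$ exactly in the variable $Y=2-z-1/z$ and read off the coefficient of $Y^j$ as $\pm C_m^n(-1)^{m-j}e_{m-j}(Y_2,\dots,Y_m)$. This is cleaner, since it avoids the paper's slightly informal step ``$N_j = P_n/(2-z-1/z)^j$ up to small errors.''

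However, the ``main obstacle'' you flag at the end does not exist. On the cylinder $\G_n$ the cycles of a CRSF are pairwise vertex-disjoint \emph{simple} closed curves, and a simple closed curve on a cylinder has winding number $0$ or $\pm1$ by the Jordan curve theorem; there are no winding-$2$ or higher cycles. Winding-$0$ cycles have trivial monodromy (the connection is flat) and so contribute a factor $2-m-1/m=0$, killing those terms. Consequently every CRSF that contributes to $\det\Delta_n(z)$ has all its cycles winding exactly $\pm1$, so the expansion from Theorem \ref{bundlelap} is \emph{literally} $\sum_{j}N_jY^j$ with $N_j$ the weighted count of $j$-cycle CRSFs --- this is exactly the paper's display (\ref{Nsum}). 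Your $Y^j$ coefficient therefore equals $N_j$ on the nose, with no cross-terms to account for, and your asymptotic $e_{m-j}(Y_2,\dots,Y_m)\sim(-1)^{m-j}\prod_{i=j+1}^m\lambda_i^n$ immediately gives $a_j=\log|C_m|+\sum_{i=j+1}^m\log\lambda_i$. The one point that both you and the paper handle only at the level of a sentence is the identification of the ESF window growth rate on $\G$ with the CRSF growth rate on $\G_n$ (boundary effects being subexponential); you state this upfront, which is appropriate, but it deserves the same caveat as in the paper.
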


\begin{proof}
We compute the characteristic polynomial $P_n$ for $\G_n=\G/n\Z$. ($\G_n$ is again a strip graph invariant under $z\to z+1$ once 
we rescale the horizontal direction by $1/n$.)
By Theorem \ref{bundlelap},
$$P_n(z)=\sum_{\text{CRSFs}~\gamma} wt(\gamma)(2-z-1/z)^{j}$$
\be\label{Nsum}=\sum_{j=1}^m N_j(2-z-z^{-1})^{j(\gamma)},\ee
where $j(\gamma)$ is the number of components of $\gamma$, and $N_j$ is the sum of weights of CRSFs with $j$ components
(each winding once around the cylinder).
We choose $z<0$; then all the terms in the above sums are positive.

Using the translational symmetry of $\G_n$ we can relate $P_n$ with $P=P_1$. We have
\be\label{Pn}
P_n(z) = \prod_{\zeta^n=z}P_1(\zeta) = 
\prod_{\zeta^n=z}\Big[C_m\zeta^{-m}\prod_{\stackrel{j=-m}{j\ne0}}^m(\zeta-\lambda_j)\Big]=
(C_m)^nz^{-m}\prod_{\stackrel{j=-m}{j\ne0}}^mz-\lambda_j^n.\ee
In particular, the roots of $P_n$ are the $n$th powers of the roots of $P_1$.
Now for $i\ge1$ fix $u$ satisfying $\lambda_i< u<\lambda_{i+1}$ (independent of $n$) and take $z=-u^n$.
For large $n$, the terms in the product $z-\lambda_j^n$ are well approximated by $z$ if $j\le i$ and by $-\lambda^n$ if $j>i$.
Thus, taking logs of (\ref{Pn}) and dividing by $n$, as $n\to\infty$ we have 
\be\label{logP}\frac1n\log|P_n(z)|= \log|C_m| + i\log u+\sum_{j>i}\log\lambda_j + o(1).\ee
In the limit $n\to\infty$ 
the RHS of this expression is a convex piecewise linear increasing function of $\log u$, with breakpoints at $u=\lambda_i$.

From (\ref{Nsum}), when $\lambda_i< u<\lambda_{i+1}$, that is, when $\lambda_i^n<-z<\lambda^n_{i+1}$ we have
\be\label{Pnapprox}P_n(z) = \sum_{j=1}^m N_j|z|^je^{o(1)}= \sum_{j=1}^m N_ju^{nj}e^{o(1)}.\ee
From (\ref{logP}) we see that when $\lambda_i< u<\lambda_{i+1}$ 
the leading contribution to $\frac1n\log|P_n|$ 
is the one coming from the $i$th term in (\ref{Pnapprox}); the other terms are exponentially smaller.
Thus the measure concentrates on CRSFs with $i$ components. Moreover for $u$ in this range the sum of weights of
CRSFs with $j$ components is equal to $N_j = P_n/(2-z-1/z)^j$ up to small errors, that
is $|C_m|^n\prod_{i>j}\lambda_i^n.$ Taking logs and dividing by $n$ gives the result.
\end{proof}

\subsection{The kernel $K(z)$}

\begin{theorem}\label{fixedz}
For any $z<0$ or $|z|=1$, $K(z)$ is the kernel of a determinantal measure $\mu(z)$ on the CRSFs of $\G_1$ all of whose components wind
once around the cylinder. 
\end{theorem}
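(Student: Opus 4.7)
The plan is to adapt the Burton--Pemantle argument (Theorem \ref{BPthm}) to the bundle Laplacian setting, using Theorem \ref{bundlelap} in place of the matrix-tree theorem. First I would verify that the candidate measure is a probability measure. Every cycle in a CRSF of $\G_1$ is a simple cycle on the cylinder, hence has winding number in $\{-1,0,1\}$; a homologically trivial cycle contributes a factor $2-1-1 = 0$ in Theorem \ref{bundlelap}, so
\[
\det \Delta(z) = \sum_\gamma wt(\gamma)\,(2-z-1/z)^{j(\gamma)},
\]
the sum being over CRSFs $\gamma$ whose $j(\gamma)$ cycles all wind once. For $z<0$ we have $2-z-1/z > 0$, and for $z = e^{i\theta}$ with $z\neq 1$ we have $2-z-1/z = 2 - 2\cos\theta > 0$, so every summand is strictly positive and $\det\Delta(z) > 0$. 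Hence
\[
\mu(z)(\gamma) := \frac{wt(\gamma)(2-z-1/z)^{j(\gamma)}}{\det \Delta(z)}
\]
is a probability measure on the CRSFs of the stated type.

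To establish the determinantal property, it suffices to show that for every subset $T\subset E(\G_1)$,
\[
\mu(z)(T \subset \gamma) = \det K(z)_T^T.
\]
The key ingredient is the matrix identity
\[
\det K(z)_T^T \cdot \det \Delta(z) = \Bigl(\prod_{e\in T} c_e\Bigr)\, \det \Delta_{\G_1/T}(z),
\]
where $\G_1/T$ is the graph obtained by contracting each $e = uv\in T$ and identifying $\C_u\cong \C_v$ via $\phi_{uv}$ in the bundle (both sides being zero when $T$ contains a nontrivial cycle). Given this, applying Theorem \ref{bundlelap} on $\G_1/T$ and the weight-preserving bijection between CRSFs of $\G_1/T$ and CRSFs of $\G_1$ containing $T$ shows that the right-hand side equals $\sum_{\gamma\supset T} wt(\gamma)(2-z-1/z)^{j(\gamma)}$; dividing by $\det \Delta(z)$ then yields $\mu(z)(T\subset\gamma)$. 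The matrix identity itself is a Schur-complement / Jacobi calculation: viewing contraction of edges as a block elimination in the augmented matrix built from $C^{-1}$, $d(z)$, and $d(1/z)^*$ whose determinant is (up to sign and the $\prod c_e$ factors) $\det\Delta(z)$, the edge-minor $\det K(z)_T^T$ records exactly the corresponding ratio of determinants.

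The main obstacle is tracking the parallel transports $\phi_{uv}$ in the bundle Schur-complement argument, to ensure that the collapsed operator really is the bundle Laplacian of $\G_1/T$. A cleaner route is induction on $|T|$. The base case $|T|=1$ is a direct computation giving $K(z)_{ee} = c_e\, \det \Delta_{\G_1/e}(z)/\det\Delta(z)$. For the inductive step, condition on a fixed $e \in \gamma$: the conditional measure is $\mu(z)$ on $\G_1/e$, while the conditional kernel is obtained from $K(z)$ by a Schur-complement / rank-one reduction on the row and column indexed by $e$. Idempotency $K(z)^2 = K(z)$, which follows from $\Delta(z) = d(1/z)^* C d(z)$, ensures that this reduction produces the transfer current kernel for $\G_1/e$, closing the induction and establishing the Burton--Pemantle formula for all $T$.
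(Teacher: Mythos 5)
Your setup (positivity of $2-z-1/z$, vanishing of contractible cycles, hence $\det\Delta(z)=\sum_\gamma wt(\gamma)(2-z-1/z)^{j(\gamma)}$ and the normalization) matches the paper, but your route to the determinantal property has a genuine gap. The determinantal identity $\mu(z)(T\subset\gamma)=\det K(z)_T^T$ must hold for \emph{all} edge sets $T$, including those containing a cycle that winds once around the cylinder --- and such events have positive probability under $\mu(z)$. Your parenthetical ``both sides being zero when $T$ contains a nontrivial cycle'' is false in exactly this case: take $\G_1$ with one vertex and one horizontal edge $e$ (monodromy $z$, conductance $c$); then the unique CRSF is $\{e\}$, so $\Pr(e\in\gamma)=1$, and indeed $K(z)_{ee}=c(z-1)\Delta(z)^{-1}(z^{-1}-1)=1\neq 0$. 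Only for \emph{contractible} cycles in $T$ do the rows of $d(z)$ become dependent and both sides vanish. Moreover, for a winding cycle your contracted object $\G_1/T$ does not exist: identifying the fibers $\C_u\cong\C_v$ via $\phi_{uv}$ around a cycle of monodromy $z\neq1$ is inconsistent, so the key identity $\det K(z)_T^T\,\det\Delta(z)=\bigl(\prod_{e\in T}c_e\bigr)\det\Delta_{\G_1/T}(z)$ is simply not formulated for these $T$. The same problem resurfaces in your inductive variant: once previously contracted edges of $T$ close up a winding cycle, the next edge is a loop with monodromy $z$, conditioning on it does not give a CRSF measure on a contracted bundle graph, and the Schur-complement step has no target kernel. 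A correct patch would have to treat a completed winding cycle separately (e.g.\ collapse it to a wired vertex and track the factor $2-z-1/z$ it contributes), which is real additional work you have not supplied.

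For acyclic $T$ your contraction identity is correct and the argument goes through, so the proposal is a genuinely different (Burton--Pemantle style, cylinder-set) route from the paper's; but note how the paper sidesteps the winding-cycle difficulty entirely: it computes the probability of each \emph{full} configuration $\gamma$ via the block-determinant/Schur-complement identity, using Theorem \ref{bundlelap} on the subgraph $\gamma$ to get $\det(d_1^*C_1d_1)=wt(\gamma)(2-z-1/z)^{j}$ (the winding cycles are handled automatically by the monodromy factor), and then upgrades these atom probabilities to the determinantal property by the inclusion--exclusion criterion of Lemma \ref{singleton}. If you want to keep your approach, you must either prove the cylinder identity for sets $T$ containing a winding cycle, or reduce to atoms as the paper does.
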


\begin{proof} This proof is essentially taken from \cite{Kenyon.bundle} with minor changes. 
Note that $2-z-1/z>0$ precisely when $z<0$ or $|z|=1$.

Let $\mu(z)$ be the probability measure assigning a CRSF $\gamma$ with $j$ components (all winding once around the cylinder)
a probability $\frac1Zwt(\gamma)(2-z-1/z)^j$, where $Z$ is the normalizing constant
\be\label{Zdef} Z= \det\Delta(z)=\sum_{\text{CRSFs $\gamma$}}wt(\gamma)(2-z-1/z)^j .\ee
Let $e_1,\dots,e_n$ be the edges of $\gamma$. Order the rows of the matrix for $d$ so that the first $n$ edges are $e_1,\dots,e_n$.
Then $d=\begin{pmatrix}d_1\\d_2\end{pmatrix}$ where $d_1$ is $n\times n$ and $d_2$ consists 
of the remaining rows of $d$. Similarly let the diagonal matrix of conductances be $C=\begin{pmatrix}C_1&0\\0&C_2\end{pmatrix}$
where $C_1$ is $n\times n$.

Note that $\det(d_1^*C_1d_1)$ is precisely $wt(\gamma)(2-z-1/z)^j$;
this follows from Theorem \ref{bundlelap} by removing from $\G$ all edges except those of $\gamma$.

Then
\begin{align}\Pr(\gamma) &= \frac{\det(d_1^*C_1d_1)}{\det\Delta}\nonumber \\
&= (-1)^{|E|-n}\frac{
\begin{pmatrix}0&0&C_1d_1\\0&I_{|E|-n}&C_2d_2\\d_1^*&d_2^*&\Delta
\end{pmatrix}}{\det\Delta}\nonumber \\
&= (-1)^{|E|-n}\det\left[\begin{pmatrix}0&0\\0&I_{|E|-n}\end{pmatrix}-Cd\Delta^{-1}d^*\right],\label{oneCRSF}
\end{align}
where in the last equality we used the algebraic identity (where $P,Q,R,S$ are submatrices, with $S$ invertible)
$$\det\begin{pmatrix}P&Q\\R&S\end{pmatrix} = \det S\det(P-QS^{-1}R).$$

A similar computation holds for all other CRSFs $\gamma$, where the matrix $\begin{pmatrix}0&0\\0&I_{|E|-n}\end{pmatrix}$
is replaced by a diagonal matrix with diagonal entries $1$ and $0$, with $0$s in the locations of the edges of $\gamma$. 
Now apply Lemma \ref{singleton} to complete the proof.
\end{proof}

\subsection{Infinite graph}

Now let us consider the infinite strip graph $\G$.

\begin{prop}\label{uniqueness}
There is a unique translation invariant Gibbs measure $\mu_j$ on $j$-component ESFs of $\G$. It is the limit 
as $n\to\infty$ of
Gibbs measures on CRSFs on $\G_n$ with $j$ cycles winding around the cylinder.
\end{prop}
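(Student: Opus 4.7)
The plan is to realize $\mu_j$ as the $n\to\infty$ limit of the explicitly determinantal measures from Theorem~\ref{fixedz} with a carefully chosen parameter, and then to identify any translation-invariant Gibbs measure on $j$-component ESFs with this limit using the quasi-one-dimensional transfer-matrix structure of the strip.

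\emph{Existence.} Fix $u\in(\lambda_j,\lambda_{j+1})$ (available by Theorem~\ref{distinct}) and set $z_n=-u^n$. By the argument of Theorem~\ref{growth}, the determinantal measure $\mu(z_n)$ on $\G_n$ concentrates on CRSFs with exactly $j$ cycles, each winding once around the cylinder, so it agrees with $\mu_j^{(n)}$ up to an error decaying exponentially in $n$. The Laplacian on $\G_n$ with flat bundle of monodromy $z$ around the cylinder block-diagonalizes under the Fourier transform along the covering group $\Z/n$: on the $\zeta$-isotypic with $\zeta^n=z$ it acts as $\Delta(\zeta)$ from \eqref{Kzdef}. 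Consequently, for edges $e,e'$ in $\G_n$ with $\G_1$-projections $e_0,e_0'$ and horizontal displacement $k$,
\[
K_n(z_n)(e,e') \;=\; \frac{1}{n}\sum_{\zeta^n=z_n}\zeta^{k}\,K(\zeta)(e_0,e_0').
\]
The $n$ roots $\zeta$ of $z_n$ are uniformly distributed on the circle $|\zeta|=u$, so this is a Riemann sum for the contour integral
\[
K_j^\infty(e,e') \;:=\; \frac{1}{2\pi i}\oint_{|\zeta|=u}\zeta^{k-1}K(\zeta)(e_0,e_0')\,d\zeta,
\]
which is independent of the choice of $u\in(\lambda_j,\lambda_{j+1})$ by Cauchy's theorem, since by Theorem~\ref{distinct} the poles of $K(\zeta)$ lie only at the isolated real roots $\lambda_i$ of $P$. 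The convergence is at rate $O(1/n)$ for each fixed $k$, so the finite minors of $K_n(z_n)$ that compute cylinder probabilities converge to the corresponding minors of $K_j^\infty$. The limit $\mu_j$ is therefore a translation-invariant determinantal measure supported on ESFs with exactly $j$ components (the $j$ cycles of each $\mu_j^{(n)}$-sample become bi-infinite paths as $n\to\infty$); the Gibbs property is inherited from each $\mu_j^{(n)}$ by passing to the limit in the conditional probability formulas on any fixed finite window.

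\emph{Uniqueness.} Let $\tilde\mu$ be any translation-invariant Gibbs measure on $j$-component ESFs. For each $n\in\Z$ cut $\G$ along the vertical line $x=n$ and record the \emph{connectivity pattern} of the edges crossing it: the partition telling which of these edges lie in the same component tree. Since $\G$ has width $m$ and each of the $j$ infinite trees must cross every vertical line, the pattern state space is finite and every allowed pattern has exactly $j$ blocks. Under $\tilde\mu$ the sequence of patterns at $x=n$, $n\in\Z$, is a stationary Markov chain whose transition weights are dictated by the DLR property (product of conductances in the intervening fundamental domain, summed over compatible configurations). Within the $j$-block sector this chain is irreducible because any two such patterns can be joined by a finite sequence of local edge-swaps preserving the number of infinite blocks. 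Hence the stationary distribution is unique; combined with the DLR equations this determines $\tilde\mu$, forcing $\tilde\mu=\mu_j$.

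The hardest step is the irreducibility in the uniqueness argument: showing that local edge-swaps in a single fundamental domain can interconvert any two non-crossing connectivity patterns with $j$ infinite blocks, using that $\G_1$ is connected and admits $j$ vertex-disjoint bi-infinite paths (guaranteed by $j\le m$). The existence step is more routine, relying on Theorem~\ref{distinct} to provide an analytic annulus separating the Riemann-sum contour $|\zeta|=u$ from the poles of $K(\zeta)$.
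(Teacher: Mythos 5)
Your existence argument follows the paper's own route (Theorems \ref{growth} and \ref{infkernel}): diagonalize the Laplacian on $\G_n$ by Fourier analysis along the $\Z/n$-action, observe the concentration of $\mu(z_n)$ on the $j$-cycle sector, and pass the Riemann sum to the contour integral over $|\zeta|=u$ with $u\in(\lambda_j,\lambda_{j+1})$. That part is essentially what the paper does, and it is sound.

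Your uniqueness argument, however, takes a genuinely different route from the paper's, and it has a real gap. The paper's proof is a coupling argument in the style of \cite{Sheff.ESF}: one finds, with probability one, translates $H_1+n_\pm$ of a disconnecting fundamental domain on which samples from $\mu_j$ and $\mu'$ agree, and then uses the Gibbs property on the region in between to couple the two measures on an arbitrarily large window. Your approach is a transfer-matrix argument, and as written it asserts that, under any translation-invariant Gibbs measure $\tilde\mu$, the process $n\mapsto X_n$ where $X_n$ is ``the partition of edges crossing $x=n$ by component tree'' is a stationary Markov chain with transitions ``dictated by the DLR property.'' Both halves of this assertion are problematic. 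First, $X_n$ is not a local observable: whether two crossing edges lie in the same component is a global event, and $X_n$ does not record whether the connecting path runs through $\{x<n\}$ or $\{x>n\}$. Conditioning on $X_n$ alone therefore does not render the two halves independent, so the Markov property is not clear; a workable state must at least record which crossing edges are present together with the left-half-induced connectivity (or both half-connectivities). Second, even for a correct Markov state, the transition kernel of the chain induced by $\tilde\mu$ is not the normalized transfer weight $T_{ss'}$; it is $T_{ss'}r_{s'}/(\lambda r_s)$ for some positive right eigenvector $r$, and different Gibbs measures could a priori correspond to different eigenvectors in the relevant sector. The uniqueness you need is precisely that the $j$-block sector of the transfer matrix has a unique (Perron--Frobenius) eigenvector, and this requires both establishing the correspondence between translation-invariant Gibbs measures and eigenvector pairs and proving irreducibility of that sector — the latter of which you flag but do not prove. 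So the transfer-matrix route is plausible but, as stated, leaves two substantive steps (the right choice of Markov state, and the Gibbs measure $\leftrightarrow$ eigenvector dictionary plus irreducibility) unaddressed, whereas the paper's coupling argument sidesteps the transfer-matrix machinery altogether.
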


\begin{proof}
The existence of $\mu_j$ follows from the limt of the corresponding measures on $\G_n$ by compactness 
(or use the construction below of $\mu_j$ as a determinantal measure).

For the uniqueness, we use the fact that any allowed local configuration has positive
probability for $\mu_j$. 
Let $H_1$ be a fundamental domain for the translation action on $\G$, and whose removal disconnects $\G$.
If $\mu'$ is another Gibbs measure on ESFs with $j$ components almost surely, 
with positive probability a random sample from 
$\mu_j$ will agree with $\mu'$ on $H_1$. In particular
given a sample from $\mu'$ and a sample from $\mu_j$, 
we can find two integers $n_-<0<n_+$ with $|n_+|,|n_-|$ arbitrarily large, so that
the samples agree on the translated fundamental domains $H_1 + n_+$ and $H_1-n_-$.
The Gibbs property of $\mu_j$ and $\mu'$ implies that on the region between these fundamental domains,
$\mu_j$ and $\mu'$ can be coupled so they agree. Thus $\mu_j$ and $\mu'$ can be coupled to agree on an arbitrarily
large neighborhood of the origin, and so must be equal.
\end{proof}

For two edges $e_1,e_2$ in $\G$, let $[e_1],[e_2]$ denote their images in $H_1$, the fundamental domain for
$\G_1$,
and $x_1,x_2\in\Z$ the translation from $e_1,e_2$ to $[e_1],[e_2]$ respectively.
Then we have the following transfer current formula which defines $\mu_j$.
Recall the definition of the matrix $K(z)$ from (\ref{Kzdef}).

\begin{theorem}\label{infkernel}
For $j\in[1,m]$ define the infinite matrix $K^{(j)}$ by the formula 
\be\label{Kintegral}K_{e_1,e_2}^{(j)}= \int_{\xi_j} K_{[e_1],[e_2]}(u)u^{x_1-x_2}\frac{du}{2\pi i u},\ee
where $\xi_j$ is a circle of radius $r$ for which $\lambda_{j}<r<\lambda_{j+1}$ (or $\lambda_m<r$ if $j=m$).
Then $K^{(j)}$ is the kernel of the determinantal measure $\mu_j$.
\end{theorem}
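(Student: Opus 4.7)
My plan is to realize $\mu_j$ as the $n\to\infty$ limit of the finite determinantal measures on $\G_n=\G/n\Z$ produced by Theorem \ref{fixedz}. Fix a radius $r$ with $\lambda_j<r<\lambda_{j+1}$ (taking $\lambda_{m+1}=+\infty$ when $j=m$), and set $z_n=-r^n$. Since $z_n<0$, Theorem \ref{fixedz} applied to $\G_n$ (itself a strip graph of width $m$) gives a determinantal measure $\mu^{\G_n}(z_n)$ on CRSFs of $\G_n$ whose cycles each wind once around the $\G_n$-cylinder, with kernel $K^{\G_n}(z_n)$. By (\ref{Pn}) the roots of $P_n$ are the $\lambda_i^n$, so $|z_n|$ sits in the fixed gap $(\lambda_j^n,\lambda_{j+1}^n)$; the concentration analysis in the proof of Theorem \ref{growth} then shows that $\mu^{\G_n}(z_n)$ concentrates, as $n\to\infty$, on CRSFs with exactly $j$ components. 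Pulled back to $\G$, such configurations are $j$-component $n$-periodic ESFs, and Proposition \ref{uniqueness} identifies any weak subsequential limit on cylinder events with the unique Gibbs measure $\mu_j$.

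The next step is to express $K^{\G_n}(z_n)$ in terms of $K(\zeta)$. The operator $\Delta^{\G_n}(z_n)$ commutes with the residual $\Z/n\Z$ action by translation and so block-diagonalizes on its eigenspaces: the $\zeta$-eigenspace (with $\zeta^n=z_n$) is canonically identified with $\Omega_0(\zeta)$ on $\G_1$, and the block is $\Delta(\zeta)$. The same decomposition carries through for $d(z_n)$, the conductance matrix $C$, and hence for the transfer current. Labeling each edge of $\G_n$ by a pair $([e],x)$ --- its image in a fundamental domain together with its horizontal offset $x\in\Z/n\Z$ --- the inverse change of basis from delta-functions to characters gives
$$
K^{\G_n}(z_n)_{([e_1],x_1),([e_2],x_2)} \;=\; \frac{1}{n}\sum_{\zeta^n=z_n} K(\zeta)_{[e_1],[e_2]}\,\zeta^{x_1-x_2}.
$$

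Now I pass to the limit $n\to\infty$. The $n$-th roots of $z_n=-r^n$ are equally spaced on the circle $|\zeta|=r$, which by Theorem \ref{distinct} lies in an annulus free of zeros of $P(z)=\det\Delta(z)$; hence $\zeta\mapsto K(\zeta)$ is holomorphic on a neighborhood of $|\zeta|=r$. The displayed sum is therefore a uniform Riemann sum for a smooth $2\pi$-periodic integrand, and converges to
$$
\oint_{|\zeta|=r} K(\zeta)_{[e_1],[e_2]}\,\zeta^{x_1-x_2}\,\frac{d\zeta}{2\pi i\,\zeta},
$$
which (renaming $\zeta$ to $u$) is exactly $K^{(j)}_{e_1,e_2}$ from (\ref{Kintegral}); Cauchy's theorem shows the integral is independent of $r\in(\lambda_j,\lambda_{j+1})$.

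Combining the three steps closes the argument. Entrywise convergence $K^{\G_n}(z_n)\to K^{(j)}$ gives convergence of every finite minor determinant, and by Theorem \ref{fixedz} each such determinant is the probability of the corresponding cylinder edge-event under $\mu^{\G_n}(z_n)$. These cylinder probabilities in turn converge to the corresponding probabilities under $\mu_j$ by the first step, so $\mu_j$ is determinantal with kernel $K^{(j)}$. The main delicate point is the first step: one must verify that the weak limit of $\mu^{\G_n}(z_n)$ on cylinder events is supported on genuine ESFs with \emph{exactly} $j$ bi-infinite components --- no stray extra component, no two bi-infinite paths merging in the limit. This is what the subexponential corrections in (\ref{logP}) and (\ref{Pnapprox}) quantify, and together with translation invariance and Proposition \ref{uniqueness} it pins the limit down to $\mu_j$.
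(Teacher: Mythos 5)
Your proof follows the paper's own argument: approximate $\mu_j$ by the determinantal measures on $\G_n$ from Theorem \ref{fixedz} with $z=-r^n$, use the concentration estimate of Theorem \ref{growth} to pin down the number of components, express $K^{\G_n}(z_n)$ as a discrete Fourier average of $K(\zeta)$ over $n$-th roots, and pass to the contour integral as a Riemann-sum limit. You also supply a couple of details the paper leaves implicit (the invocation of Proposition \ref{uniqueness} to identify the limit, and the correct range $\lambda_j^n<|z|<\lambda_{j+1}^n$, fixing a small index slip in the paper's proof), but the approach is essentially identical.
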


Note that the RHS depends on $j$ only through the contour of integration. So
the $K^{(j)}$ are simply the different Laurent expansions of $K(z)$. 

\begin{proof} Let $P_n$ be the characteristic polynomial of $\G_n$, and $z<0$.
By Theorem \ref{fixedz} above replacing $\G_1$ by $\G_n$, 
$K_n(z)$ is the kernel of a determinantal measure $\mu(z)$ constructed from $\G_n$.
This $\mu(z)$ is a probability measure on CRSFs of $\G_n$
giving a CRSF $\gamma$ with $j$ components a probability 
$$\mu(\gamma) = \frac1{P_n(z)}(2-z-1/z)^jwt(\gamma).$$

By Theorem \ref{growth}, as $n\to\infty$
for $z<0$ in the range $\lambda_{j-1}^n<|z|<\lambda_{j}^n$, the term $N_j(2-z-1/z)^j$ in the sum (\ref{Nsum}) has larger exponential growth rate
than any of the other terms, 
and so the measure $\mu(z)$ concentrates as $n\to\infty$ on CRSFs with $j$ components.

We can compute $K_n(z)$ for the graph $\G_n$ in terms of $K$ for the graph $\G_1$ as follows. 
Let $e_1,e_2$ be edges of $\G_n$. Let $[e_1],[e_2]$ be their images in a fundamental domain for $\G_1$,
and $x_i$ the translation from $e_i$ to $[e_i]$. Then 
$$[K_n(z)]_{e_i,e_j} = \frac1n\sum_{\zeta^n=z} [K(\zeta)]_{[e_1],[e_2]}\zeta^{x_1-x_2}.$$
In the limit $n\to\infty$ this expression tends to (\ref{Kintegral}); in particular (\ref{Kintegral})
defines a limiting measure $\mu_j$ which is determinantal and supported on
ESFs with $j$ components.
\end{proof}

Note that while the measures for finite $n$ depend on $|z|$, in the limit they do not depend
on $|z|$ in the range $\lambda_j<|z|<\lambda_{j+1}$.

\subsection{Massive case}

Let $\G$ be a strip graph as before.
Let $P_M=\det(\Delta(z) + D_M)$. It is reciprocal: $P_M(1/z)=P_M(z)$, since $\Delta(1/z)^* = \Delta(z)$. 

\begin{lemma}\label{massivereality}
For $M\ge0$ all roots of $P_M$ are real, distinct and positive.
\end{lemma}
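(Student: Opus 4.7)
The plan is to adapt the proof of Theorem~\ref{distinct} in three steps. The massive case is in fact structurally simpler, because the delicate role played by the double root at $z=1$ in the massless case disappears: for $M\not\equiv 0$ the value $z=1$ is no longer a root, and more generally all roots lie off the unit circle, which I would establish at the outset.

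First I would show $P_M(z)\ne 0$ for every $|z|=1$. For $|z|=1$ the operator $\Delta(z)+D_M$ is Hermitian (since $\overline{\Delta(z)}^{\,T}=\Delta(1/\bar z)=\Delta(z)$), and its quadratic form
$$\langle(\Delta(z)+D_M)f,f\rangle \;=\;\|d(z)f\|_C^2+\sum_v M_v|f(v)|^2$$
is nonnegative and vanishes only if $d(z)f=0$ and $M_vf(v)=0$ for every $v$. For $z\ne 1$ on the unit circle the kernel of $d(z)$ on the connected cylinder $\G_1$ is trivial; for $z=1$ it consists of constants, which the $M_v|f(v)|^2$ term then kills using some $v$ with $M_v>0$. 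Hence $P_M(z)>0$ on $|z|=1$, and in particular $z=1$ is not a root.

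Second, I would set up a deformation as in the proof of Theorem~\ref{distinct}: realize the target strip as a minor of a grid $\G_{m,n}$ by degenerating conductances, and simultaneously interpolate the mass profile from a constant positive value $M_0$ to the desired $M$. At the grid base with constant conductances and constant mass $M_0>0$, $\Delta(z)+D_{M_0}$ reduces to $\Delta_{L_m}+(M_0+2-z-z^{-1})I$, so the roots satisfy $z+z^{-1}=X_j+M_0+2$ where $X_j\in[0,4]$ are the eigenvalues of the path Laplacian $\Delta_{L_m}$; since $X_j+M_0+2>2$ strictly, each of the $m$ equations contributes two distinct real positive reciprocal roots, producing $2m$ distinct real positive base-case roots. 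I would then track these roots continuously along the deformation.

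Third, to preclude collisions or roots crossing into the complex plane during the deformation, I would extend the nullvector analysis of Theorem~\ref{distinct} via a massive maximum principle: for real $z>0$ and a real $z$-periodic nullvector $f$ of $\Delta+M$, at an interior vertex with $f(v)>0$ one has $\Delta f(v)=-M_vf(v)\le 0$, so $f(v)$ is at most the weighted average of its neighbors (strictly so when $M_v>0$); analogously for minima; and at vertices where $f(v)=0$ the equation reduces to ordinary harmonicity. With these facts the nullvector arguments transfer: $f$ cannot vanish on a boundary vertex (otherwise one constructs infinite positive and negative paths whose $\Z$-translates contradict the finite width of $\G$ via the Jordan curve theorem), $f$ has no saddle points, and $f$ is monotone on each boundary with the same sign-alternation pattern across root indices. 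A merger of adjacent roots then produces either a two-dimensional nullspace with a linear combination vanishing at a boundary vertex (contradiction), or a limit nullvector that is constant on the upper boundary, which since $z\ne 1$ forces $f\equiv 0$ there (contradiction). The ``root approaches $z=1$'' case requiring special treatment in Theorem~\ref{distinct} no longer arises. Positivity then follows because the leading and trailing Laurent coefficients $C_{\pm m}$ of $P_M$ are nonzero (so roots stay bounded away from $0$ and $\infty$) and no root can cross the unit circle (so a real root cannot pass to a negative value). The most delicate step, I expect, is rigorously constructing the infinite monotone positive and negative paths for the boundary-nonvanishing claim: the massive maximum principle yields only non-strict inequalities at zero-mass vertices, so one must interleave strict-increase steps at positive-mass vertices with propagation through harmonic regions, using $z$-periodicity to ensure the positive region extends to infinity in the translation direction.
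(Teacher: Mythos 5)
Your argument is correct and follows essentially the same route as the paper's proof: start from the grid $\G_{m,n}$ with constant conductances and constant mass $M_0>0$, then deform conductances and masses while tracking roots via the nullvector boundary-nonvanishing, no-saddle-point, and boundary-monotonicity analysis carried over from Theorem~\ref{distinct}. The paper's write-up is terser but invokes the same facts; your Step~1 (positive-definiteness of $\Delta(z)+D_M$ on $|z|=1$ via the quadratic form, which at once excludes $z=1$, keeps roots off the unit circle, and delivers positivity of the real roots) and your ``massive maximum principle'' at vertices with $M_v>0$ are worthwhile explicit renderings of steps the paper leaves implicit.
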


\begin{proof}
This is proved in the same manner as Theorem \ref{distinct}.
On $\G_{m,n}$ with conductances $1$, for a constant mass $M\equiv M_0>0$ the roots of $\Delta(z)+D_M$ are real, 
positive and distinct, see the proof of Theorem \ref{distinct}. 
Moreover the corresponding $f_j$, when multiplied by an appropriate sign so that they are increasing on the lower boundary, 
have the property of being alternately increasing and decreasing on the upper boundary as before
(in fact the $f_j$ are independent of $M_0$).

As in the proof of Theorem \ref{distinct}, on $\G$ the $f_j$ cannot be zero on the boundary and cannot have critical points.
As we vary the $M_v$ away from the constant $M_0$, the roots $\lambda_j$ and $\lambda_{j+1}$ cannot merge, since that would lead to a
function $f_j$ with a zero boundary value. Moreover no root can tend to $1$ unless $M\equiv0$ since this would
give a function on $\G_1$ in the kernel of the matrix $\Delta(1)+D_M$ which is nonsingular when at least one $M_v$ is positive.
\end{proof}

\begin{proof}[Proof of Theorem \ref{massive1dthm}.]
The uniqueness of $\mu_j$ follows the same proof as in Proposition \ref{uniqueness} for the massless case.
Take $\G_1$ and add a new boundary vertex connected to all vertices of $\G_1$ with an edge of conductance $M_v$.
We keep to old operator $d$ (ignoring the new edges) and define
$$K(z) = Cd(1/z)^*(\Delta+D_M)^{-1}d(z).$$

Theorem \ref{fixedz} applies with this $K$, although (\ref{Zdef}) is a sum over MTSFs instead of CRSFs, 
but the rest of the proof follows without change.
This shows that $K(z)$ is a determinantal measure on MTSFs on $\G_n$.

Let $1<\lambda_1<\dots<\lambda_m$ be the roots of $P(z)=\det(\Delta(z)+D_M)$ which are larger than $1$.
The proof for the growth rates is nearly identical to the proof of Theorem \ref{1dthm}, except that in (\ref{Nsum}) we are summing over the larger set of MTSFs rather than CRSFs (see Theorem \ref{bundlelapmass}), 
and the range of $j$ values (for the number of infinite components) is $[0,m]$ rather than $[1,m]$. 
The proof of Theorem \ref{infkernel} now extends to this massive case without modification.
\end{proof}

\section{$\Z^2$-periodic graphs}

Let $\G$ be a planar graph embedded in $\R^2$, invariant under translations in $\Z^2$ and with finite quotient. 
Let $\G_1=\G/\Z^2$; this is a graph on a torus. More generally let $\G_n=\G/n\Z^2$.

\subsection{Unit flow polygon}

A \emph{flow} on a graph is a function $\omega$ on oriented edges, 
satisfying $\omega(-e)=-\omega(e)$ (where $-e$ is the edge in the reverse orientation),
and $0=\sum_{u\sim v} \omega(vu).$ Thus the inflow at $v$ (the sum of the $\omega(uv)$ which are positive)
equals the outflow at $v$ (the sum of the $\omega(vu)$ which are positive) at every vertex.

Let ${\cal F}$ be the set of flows on $\G_1$ of capacity $1$ at each vertex, that is,
such that the inflow to each vertex is at most $1$.  
Each such flow $\omega$ defines an element $[\omega]\in H_1(\G,\R)$ and the image of ${\cal F}$ in $H_1(\G,\R)$
is a polytope $X=X(\G_1)$, the \emph{unit flow polytope}.
It is symmetric about the origin in $H_1(\G,\R)$: $X=-X$, because reversing a capacity-$1$ 
flow again gives a capacity-$1$ flow.
It is not hard to see that the vertices of $X$ are integer-valued flows (with value $0,\pm1$ on each edge).

Since $\G_1$ is embedded in $\T^2$ there is an induced linear map $H_1(\G_1,\R)\to H_1(\T^2,\R)\cong\R^2$, 
and the image of $X$ under this map is a convex polygon $N=N(\G_1)\subset\R^2$ with integer vertices, the \emph{unit flow polygon}. 

\begin{lemma}\label{unitflows}
To every integer point in the unit flow polygon $N$ there is a corresponding flow in ${\cal F}$
taking integer values ($0,1$ or $-1$) on the edges, and the nonzero edges form a collection of vertex-disjoint oriented closed loops on $\G_1$. 
\end{lemma}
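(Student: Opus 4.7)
The plan has two parts: the structural conclusions (values in $\{-1,0,1\}$ and support a disjoint union of oriented simple cycles) fall out almost automatically from integrality plus the capacity bound, while the non-trivial content is the existence of an integer flow for every integer point in $N$.

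First, suppose $\omega\in\cal F$ is integer-valued. If $|\omega(e)|\ge 2$ for a directed edge $e$ whose head is $v$, then the inflow at $v$ is at least $|\omega(e)|\ge 2$, contradicting the capacity bound $1$. So $\omega(e)\in\{-1,0,1\}$. At each vertex $v$, flow conservation $\sum_{e\to v}\omega(e)=\sum_{v\to e}\omega(e)$ combined with integrality and inflow at most $1$ forces at most one incoming and at most one outgoing nonzero edge (both of absolute value $1$); following the uniquely determined next-arrow from any support vertex traces out a simple oriented cycle, so the support is a vertex-disjoint union of oriented simple cycles.

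For the existence half, fix an integer point $p=(s,t)\in N$. The set $X_p$ of flows in $\cal F$ with homology exactly $p$ is the non-empty polytope defined by flow conservation, non-negativity (after splitting each undirected edge into two opposite directed ones), the vertex-capacity inequalities, and two linear equations fixing the homology class relative to a chosen basis of $H_1(\T^2,\Z)$. I would argue that $X_p$ has integer vertices by modeling it as a standard single-commodity capacitated network flow problem with integer data. The vertex capacities become unit edge capacities after vertex splitting ($v\mapsto v_{\text{in}}v_{\text{out}}$ joined by an edge of capacity $1$), which makes the core constraint matrix an incidence matrix of a directed graph and hence totally unimodular. To incorporate the two homology constraints, I would cut $\T^2$ along two simple closed loops $\alpha_1,\alpha_2$ representing a basis of $H_1(\T^2,\Z)$ and transverse to $\G_1$, producing a rectangle $R$ in which $\G_1$ sits as a planar graph with identified pairs of half-edges on the boundary; the homology conditions become prescribed net fluxes $s$ and $t$ across two pairs of identified boundary sides, which I would enforce by adjoining two auxiliary "pump" edges carrying fixed integer flows $s$ and $t$. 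The resulting LP is a single-commodity integer circulation problem, its feasible polytope has integer vertices, and non-emptiness (from $p\in N$) yields an integer solution, i.e., an integer flow $\omega$ on $\G_1$ with homology $p$.

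The main obstacle I expect is verifying that this reduction is genuinely single-commodity. Naively, enforcing two independent homology constraints has a multi-commodity flavor, and multi-commodity flow LPs fail total unimodularity in general. The resolution is the cut-and-pump construction above: after cutting along both loops, the two fluxes are supported on topologically independent pieces of $\partial R$, and the edge-by-edge matching across cuts is enforced automatically by flow conservation at the newly introduced boundary vertices, so only the total flux across each cut matters and can be fixed by a single pump edge each. Checking carefully that the combined augmented graph remains a directed incidence structure with integer capacities—so that standard total unimodularity applies—is the technical step I would take most care with.
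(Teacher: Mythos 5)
Your part 1 is fine: integrality plus the unit vertex-capacity bound forces values in $\{-1,0,1\}$, at most one nonzero incoming and one nonzero outgoing edge at each vertex, hence a vertex-disjoint union of oriented simple cycles.

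The gap is in part 2, precisely at the step you flag as needing care, and I do not believe the cut-and-pump construction resolves it. After cutting $\T^2$ along $\alpha_1,\alpha_2$ you face a dichotomy for the half-edges created at each cut. If you re-glue them (place a single degree-2 vertex at each crossing and re-identify), you are back to the torus graph and the two homology conditions are simply two extra $\{0,\pm1\}$ rows appended to the vertex–arc incidence matrix; a column corresponding to a cut arc then has three nonzero $\pm1$ entries, so the "one $+1$, one $-1$ per column" structure that gives total unimodularity is destroyed, and TU is not automatic. If instead you leave the two half-edges separate and route them through a pump vertex $P_i$, conservation at $P_i$ fixes the total flux across the cut, but it does \emph{not} enforce that the flows on the two halves of a given cut edge agree: conservation at $P_i$ only controls the sum, so a feasible (even integer) point of the augmented LP need not lift to a well-defined flow on $\G_1$. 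Either way, the reduction to a single-commodity integer circulation with a TU constraint matrix is not established; and TU failing does not merely mean the proof is incomplete, it means the polytope's integrality (which does hold here) must be argued by a different mechanism.

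The paper's proof avoids LP integrality entirely. It lifts $\omega$ to the universal cover and writes $\tilde\omega=\partial h$ for a face function $h$ with quasiperiodicity $h(f+(0,1))=h(f)+i$, $h(f+(1,0))=h(f)+j$ encoding the homology class; it then modifies $\omega$ by boundaries so that $h$ has no local extrema, which forces $\partial h$ to have no saddle points, so the incoming arcs at each vertex form a contiguous cyclic interval and the $h$-values on the faces around any vertex lie in an interval of length $1$ (from the unit capacity bound). Rounding $h$ down to $g=\lfloor h\rfloor$ preserves quasiperiodicity and the Lipschitz bound, and $\partial g$ is the desired integer flow. This is a direct construction of one integer point, which is all the lemma needs. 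If you want to pursue the combinatorial-optimization route, you would need a separate argument that $X_p$ has integer vertices (the constraint matrix is not TU in any obvious sense), or you could fall back to exhibiting an explicit integer flow as the paper does.
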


\begin{proof} Start with a flow $\omega\in X(\G_1)$ with integer homology class $(i,j)\ne(0,0)$. Lift $\omega$ to a flow 
$\tilde\omega$ on
the universal cover $\G$ on $\R^2$.
Such a flow can be represented as $\tilde\omega = \partial h$ (by definition 
$\partial h(e):= h(a)-h(b)$ where $a,b$ are the faces left and right of oriented edge $e$)
where $h$ is a real-valued function on the faces of $\G$. The fact that $\omega$ has homology class $(i,j)$ implies that
for any face $f$,  $h(f+(0,1))=h(f)+i$ and $h(f+(1,0))=h(f)+j$. We can change $\omega$ by adding a boundary 
(so as to not change its homology class) so that $h$ has no extrema: if $h$ 
has a local minimum at a face or connected union of faces (connected across edges), 
increase the value of $h$ on this set of faces to be the minimum of the value on the
neighboring faces. This changes $h$ to $h'=h+h_1$, where $\partial h_1$ has homology $(0,0)$,
and thus does not change the homology class of $\omega$.
Moreover this changes the flow $\partial h$ by decreasing its magnitude at all of the edges bounding the union, and so the new flow remains in $X$. The same operation applies even to infinite collections of faces on which $h$ has a local minimum.

Thus we may assume (up to changing $\omega$ to a different flow in the same homology class)
that $h$ has no extrema. Thus the flow $\partial h$ has no saddle points, that is, at a vertex there are no four edges which in cyclic order have
flow in,out,in,out;
such a saddle point, along with periodicity of $\partial h$, would necessarily lead to an oriented cycle in $\partial h$, which would 
necessarily encircle an extremum for $h$. 

If $\partial h$ is saddle-point-free, the inflowing edges at a vertex $v$ form a contiguous interval in cyclic order around $v$
(ignoring the edges of flow zero) 
and similarly for the outflowing edges. The values of $h$ on the faces
neighboring $v$ in cyclic order are thus decreasing then increasing; that is, they have at most one local minimum and one local maximum.
Since the flow has capacity one all these values are contained in an interval of length $1$. 

Now let $g(f) = \lfloor h(f)\rfloor$,  that is,
$h$ rounded down to the nearest integer. Then $g$ still satisfies  $g(f+(0,1))=g(f)+i$ and $g(f+(1,0))=g(f)+j$,
so $\partial g$ gives an integer flow on $\G_1$ with homology class $(i,j)$. Moreover
$g$ is a unit-capacity flow: this follows from the fact that the values of $h$ on faces neighboring any vertex are
contained in an interval of length $1$.  

The support of $\partial g$, mapped back to $\G_1$ is the desired collection of vertex-disjoint oriented closed loops in homology class
$(i,j)$. 
\end{proof}

Each such collection as in the lemma
can be extended to a CRSF or MTSF on $\G_1$, by removing homologically trivial loops, ignoring the orientation and taking a spanning tree or rooted spanning forest of the complement, wired to the loops.

Conversely we can assign a homology class to a CSRF or MTSF (whose cycles are noncontractible) 
by orienting each loop \emph{in a consistent manner}.
That is,  if one loop has homology class $(i,j)\ne(0,0)$ for one orientation (and thus, since we are on a torus, all other loops have homology class $\pm(i,j)$), 
then we orient all loops so that they
have the same homology class $(i,j)$, giving a total homology class of $(ki,kj)$ if there are $k$ loops. 
Thus to every integer point in $N$ there is an oriented CRSF or MTSF with that homology class, and conversely.

\subsection{Measures}

Let $N$ be the flow polygon of $\G_1$. More generally $N_n=nN$ is the flow polygon of $\G_n$: simply scale up $N$ by the factor $n$.
For each integer homology class $(p,q)\in N_n$, with $(p,q)\ne(0,0)$, write $(p,q)=(ki,kj)$ where $i,j$ are relatively prime and $k=GCD(p,q)$.
Let $\Omega^{(n)}_{p,q}$ be the set of MTSFs on $\G_n$ of total homology class $(p,q)$, that is, with $k$ cycles each 
having homology class $(i,j)$. We let $\mu^{(n)}_{p,q}$ be the associated probability measure on $\Omega^{(n)}_{p,q}$
giving a MTSF a probability proportional to the product of its edge conductances and root weights.
We say $(p/n,q/n)$ is the \emph{slope} of $\mu^{(n)}_{p,q}$; here $p/n$ is the density of cycles per unit length in the vertical direction,
and $q/n$ is the density of cycles per unit length horizontally. 

\begin{theorem}For a point $(s,t)\in N$ the weak-* limit
$$\mu_{s,t} = \lim_{n\to\infty} \mu^{(n)}_{[sn],[tn]}$$
exists and defines a determinantal measure on ERSFs on $\G$ with slope $(s,t)$, that is, 
with infinite components of average direction $s\hat x+t\hat y$
and average density $s$ per unit vertical length and $t$ per unit horizontal length.
\end{theorem}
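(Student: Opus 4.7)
The plan is to mirror Theorems \ref{infkernel} and \ref{massive1dthm} using the two-parameter line-bundle Laplacian $\Delta(z,w)+D_M$ on $\G_1$, with selection of the homology class $(p,q)$ now achieved by a double contour integral in $(z,w)$ whose moduli are chosen via Legendre duality with the Ronkin function $R$ of $P(z,w)$.

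First, for each $(z,w)\in(\C^*)^2$ I would set up the flat connection on the torus $\R^2/n\Z^2$ with monodromies $z,w$ around the two generators, giving the massive bundle Laplacian $\Delta_n(z,w)+D_M$ on $\G_n$ with characteristic polynomial $P_n(z,w)=\prod_{\zeta^n=z,\,\eta^n=w}P_1(\zeta,\eta)$. By Theorem \ref{bundlelapmass},
\[P_n(z,w)=\sum_\gamma wt(\gamma)\prod_{v\text{ roots}}M_v\prod_{c\in\gamma}\bigl(2-z^{a(c)}w^{b(c)}-z^{-a(c)}w^{-b(c)}\bigr),\]
the sum being over MTSFs $\gamma$ of $\G_n$ with $(a(c),b(c))$ the torus homology class of the cycle $c$. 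For $(z,w)$ in a region where all cycle factors are positive, the transfer kernel
\[K_n(z,w)=Cd(1/z,1/w)^*(\Delta_n(z,w)+D_M)^{-1}d(z,w),\]
together with Lemma \ref{singleton}, is the kernel of a determinantal probability measure $\mu^{(n)}(z,w)$ on MTSFs of $\G_n$, by the same argument as Theorem \ref{fixedz}.

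Next I would carry out the two-dimensional analog of the growth-rate argument of Theorem \ref{growth}. Writing $P_n(z,w)=\sum_{(p,q)}N_{p,q}^{(n)}z^pw^q$, where $N_{p,q}^{(n)}$ is the total weight of MTSFs on $\G_n$ of homology class $(p,q)$, the exponential rate of $N_{p,q}^{(n)}$ is $n^2$ times the Legendre dual of $R$ at $(p/n,q/n)$. For fixed $(s,t)\in\mathrm{int}(N)$, the gradient of $R$ surjects onto $\mathrm{int}(N)$, so we may choose $(r_1,r_2)=(e^x,e^y)$ with $\nabla R(x,y)=(s,t)$. With this choice, the summand $N_{[sn],[tn]}^{(n)}r_1^{[sn]}r_2^{[tn]}$ dominates all others in $P_n(r_1,r_2)$, forcing $\mu^{(n)}(r_1,r_2)$ to concentrate as $n\to\infty$ on MTSFs of total homology $\approx([sn],[tn])$. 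Conditioning $\mu^{(n)}(r_1,r_2)$ on MTSFs of homology exactly $([sn],[tn])$ yields $\mu^{(n)}_{[sn],[tn]}$ (independent of $(r_1,r_2)$, since all cycle factors are identical for configurations of the same homology), and cylinder probabilities of the conditional measure are obtained from those of $\mu^{(n)}(z,w)$ by Fourier-type integration over $(z,w)=(r_1e^{i\theta_1},r_2e^{i\theta_2})$.

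Finally, using the two-dimensional block-Fourier formula
\[[K_n(z,w)]_{e_1,e_2}=\frac1{n^2}\sum_{\zeta^n=z,\,\eta^n=w}[K_1(\zeta,\eta)]_{[e_1],[e_2]}\zeta^{x_1-x_2}\eta^{y_1-y_2}\]
(the analog of the one used at the end of the proof of Theorem \ref{infkernel}), the Fourier expression for the kernel of $\mu^{(n)}_{[sn],[tn]}$ becomes, in the limit $n\to\infty$,
\[K^{(s,t)}_{e_1,e_2}=\iint_{|z|=r_1,\,|w|=r_2}K_{[e_1],[e_2]}(z,w)\,z^{x_1-x_2}w^{y_1-y_2}\,\frac{dz}{2\pi iz}\frac{dw}{2\pi iw},\]
the 2D analog of \eqref{Kintegral}. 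Convergence of finite principal-minor determinants then gives weak-$*$ convergence of $\mu^{(n)}_{[sn],[tn]}$ to the determinantal measure $\mu_{s,t}$ with kernel $K^{(s,t)}$; the concentration step guarantees that the limit is an ERSF with $s$ bi-infinite paths per unit vertical length and $t$ per unit horizontal length. The main obstacle is the saddle-point/Legendre-duality step: one must justify both that $\nabla R$ surjects onto $\mathrm{int}(N)$ and that the contour $\{|z|=r_1,|w|=r_2\}$ can be taken in the complement of the amoeba of $P$, so that $K_1(z,w)$ is regular on it. Both facts rely on the Harnack property of the spectral curve (Theorem \ref{harnackthm}) and the strict convexity of $R$ off its amoeba; boundary points of $N$ and the exceptional integer points with real nodes require separate treatment via degenerate saddles or direct analysis of the singularities.
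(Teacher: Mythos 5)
Your proposal follows essentially the same route the paper takes: it defers the technical work to Theorem~\ref{Kst}, whose proof is exactly what you sketch (bundle Laplacian with monodromies $z,w$, the Fourier-type formula relating $K_n$ on $\G_n$ to $K$ on $\G_1$, choice of contour radii via the Ronkin function, and a saddle-point/concentration argument showing that for those radii the sum~\eqref{Zsum} is dominated by terms with homology close to $([sn],[tn])$). Two points in your sketch should be corrected.

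First, the contour $\{|z|=e^x,|w|=e^y\}$ with $\nabla R(x,y)=(s,t)$ is \emph{not} in the complement of the amoeba when $(s,t)$ is a noninteger interior point of $N$; by the properties of Harnack curves, $\nabla R$ maps the interior of the amoeba bijectively onto $\operatorname{int}(N)$ minus some integer points, so the chosen torus generically passes through $\{P=0\}$ at two complex-conjugate points. The integral in the kernel formula is still well defined because these are isolated singularities of an integrable type (this is where the Harnack property is used, not to avoid the amoeba). Relatedly, for $(z,w)$ off the unit torus the factors $2-z^iw^j-z^{-i}w^{-j}$ are generally complex, so $\mu^{(n)}(z,w)$ is not a probability measure; the paper works around this by proving the determinantal identity~\eqref{Z/Z} on $|z|=|w|=1$ (where it is a probability) and extending by analytic continuation of rational functions, then choosing $z,w$ with small arguments so that the dominant terms have nearly equal phase and can be compared after taking absolute values.

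Second, your concentration step is doing the right thing for establishing that the limit kernel corresponds to the right homology class, but the claim that the \emph{limiting} measure has deterministic slope $(s,t)$ — i.e., that the cycles do not wander — is handled in the paper by a separate argument: the horizontal and vertical densities of infinite components are tail events, and determinantal measures have trivial tail (citing Lyons), so these densities are deterministic in the limit. You should add this step rather than rely on the finite-$n$ concentration alone, since finite-$n$ concentration controls the total homology on $\G_n$ but not the spatial homogeneity of the infinite-volume sample.
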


\begin{proof}
We can view $\mu^{(n)}_{i,j}$ as a (determinantal) measure on ERSFs of $\G$ which are periodic with period $n$. 
For existence of the limit it suffices to show that the kernel of the determinantal measure $\mu^{(n)}_{[sn],[tn]}$ converges.
This is accomplished in Theorem \ref{Kst} below. It remains to show that the limit is supported on ERSFs of slope $(s,t)$.
For this it suffices to show that the cycles for $\mu^{(n)}_{[sn],[tn]}$ do not wander far from their ``average location'', that is,
when measured from the origin one sees the correct density and direction of cycle components. This follows from the 
tail triviality of the limit measure, which is a fact about any determinantal process (see \cite{Lyons}): the horizontal 
and vertical densities of infinite components are tail events and so must have well-defined limits. 
\end{proof}

\subsection{Kernels}

For $z,w\in\C^*$ take a flat line bundle with connection on $\G_1$ having monodromy $z$ on a path with homology $(1,0)$ 
and monodromy $w$ on a path with
homology $(0,1)$. Let $\Omega_0(z,w)$ be the space of sections; as before we can identify $\Omega_0(z,w)$
with the space of $(z,w)$-periodic functions on $\G$, that is, functions $f:\G\to\C$ satisfying
$f(v+(x,y))=z^xw^yf(v)$ for all vertices $v\in\G$ and $(x,y)\in\Z^2$. 
Similarly define $\Omega_1(z,w)$ to be the space of $1$-forms with values in the line bundle over the edges of $\G_1$,
or equivalently, functions $\omega$
on directed edges of $\G$ satisfying $\omega(-e)=-\omega(e)$
and $\omega(e+(x,y))=z^xw^y\omega(e)$ for translations $(x,y)\in\Z^2$.

Define $d:\Omega_0(z,w)\to \Omega_1(z,w)$ as before and 
$$\Delta(z,w) = \Delta|_{\Omega_0(z,w)} = d^*Cd|_{\Omega_0(z,w)}.$$

Let $P(z,w) =\det(\Delta(z,w)+D_M.$
We call $P(z,w)$ the \emph{characteristic polynomial}. 
$P(z,w)$ is reciprocal: $P(z,w)=P(1/z,1/w)$, because $\Delta(1/z,1/w)=\Delta(z,w)^t$.  
In Theorem \ref{harnackthm} (see proof below in Section \ref{Harnacksection}) it is proved that $\{P=0\}$ is a \emph{simple Harnack curve}\footnote{
Among the different definitions/characterizations of simple Harnack curves, the simplest is perhaps that a simple Harnack curve
is the zero set of a real polynomial $P$ with the property that the
intersection of $\{P=0\}$ with any torus
$\{(z,w)\in\C^2~:~|z|=r_1,|w|=r_2\}$ consists in at most two points (and if two points they are complex conjugate points), see \cite{PR}.}.
Let $N(P)$ be the Newton polygon of $P$.

\begin{lemma}
$N(P)=N$ where $N$ is the unit flow polygon of $\G_1$. 
\end{lemma}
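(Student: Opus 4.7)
The plan is to expand $P(z,w) = \det(\Delta(z,w)+D_M)$ using the MTSF formula of Theorem~\ref{bundlelapmass}:
\[P(z,w) = \sum_{\gamma} wt(\gamma)\prod_{v\in \text{roots}(\gamma)} M_v \prod_{\eta \in \text{cycles}(\gamma)} \bigl(2 - z^{a_\eta}w^{b_\eta} - z^{-a_\eta}w^{-b_\eta}\bigr),\]
where $(a_\eta,b_\eta)\in\Z^2$ is the homology class in $H_1(\T^2,\Z)$ of an orientation of cycle $\eta$. Expanding each cycle factor via a sign $\epsilon_\eta\in\{-1,0,+1\}$ (with $\epsilon_\eta=0$ selecting the $2$ and $\epsilon_\eta=\pm 1$ selecting $-z^{\pm a_\eta}w^{\pm b_\eta}$), every monomial of $P$ has exponent of the form $(p,q)=\sum_\eta \epsilon_\eta (a_\eta,b_\eta)$.

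For $N(P)\subseteq N$: the cycles of any MTSF are pairwise vertex-disjoint, so the active set $\{\eta:\epsilon_\eta\ne 0\}$, oriented by the sign $\epsilon_\eta$, constitutes an integer unit-capacity flow on $\G_1$ of homology class $(p,q)$. By definition of the unit flow polygon this forces $(p,q)\in N$.

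For $N\subseteq N(P)$ it suffices to realize every vertex of $N$ as an exponent of $P$ with nonzero coefficient. Fix a vertex $(p,q)=k(i,j)$ of $N$ with $(i,j)$ primitive and $k\geq 1$. By Lemma~\ref{unitflows} there is a unit-capacity integer flow on $\G_1$ realizing $(p,q)$, consisting of vertex-disjoint oriented simple cycles whose homology classes are primitive (simple cycles on a torus have primitive or trivial homology). Extend this flow to an MTSF $\gamma_0$ by attaching trees spanning the remaining vertices and rooted on the loops of the flow; then $\gamma_0$ has only cycle-rooted components, with $wt(\gamma_0)>0$ and no $M_v$ factors required, so it contributes to $[z^p w^q]P$ with a definite sign.

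The main obstacle is ruling out sign cancellation in $[z^p w^q]P$, since different $(\gamma,\epsilon)$ carry signs $(-1)^{|\{\eta:\epsilon_\eta\ne 0\}|}$ whose parity is a priori not fixed. The key claim is that at an extreme vertex $(p,q)=k(i,j)$, every contribution has common sign $(-1)^k$. When all active cycles have homology $\pm(i,j)$, writing $k_+, k_-$ for the numbers with $\epsilon_\eta=+1$ and $\epsilon_\eta=-1$ respectively, the identity $k_+-k_-=k$ forces $k_++k_- \equiv k \pmod 2$. For configurations that mix in cycles of other primitive homologies, one uses extremality, by choosing a linear functional $\ell$ uniquely maximized on $N$ at $(p,q)$, to constrain the possible cycle-homology vectors and to produce sign-preserving pairings among the remaining configurations. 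With the common sign $(-1)^k$ pinned, $[z^p w^q]P$ equals $(-1)^k$ times a sum of nonnegative terms, strictly positive thanks to $\gamma_0$'s contribution, so $(p,q)\in N(P)$ and equality $N(P)=N$ follows.
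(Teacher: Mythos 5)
Your expansion of $P$ is the right starting point, and your $N(P)\subseteq N$ argument (vertex-disjoint cycles give a unit-capacity flow, hence an exponent in $N$) is correct. Your sketch for $N\subseteq N(P)$ has a genuine gap, though. You allow a single MTSF to contain cycles $\eta$ with \emph{different} primitive homology classes $(a_\eta,b_\eta)$, and you then try to handle such ``mixed'' configurations by an unproven claim about ``sign-preserving pairings'' built from a linear functional. That step is neither proved nor actually correct as a mechanism; extremality alone does not furnish a sign-respecting involution on the contributing $(\gamma,\epsilon)$ pairs.

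The point you are missing is topological, and it is exactly what makes the paper's formula $(\ref{torusPsum})$ well-posed: in a CRSF or MTSF on a torus the cycle components are pairwise disjoint noncontractible simple closed curves, and any two such curves on $\T^2$ are isotopic, hence have the same primitive homology class up to sign. (Contractible cycles contribute weight $2-m(\eta)-1/m(\eta)=0$ since the connection is flat, so they never appear with nonzero weight.) Thus for each MTSF $\gamma$ there is a single primitive $(i,j)$ such that every cycle of $\gamma$ has class $\pm(i,j)$, and the cycle product collapses to $(2-z^iw^j-z^{-i}w^{-j})^{k'}$, where $k'$ is the number of cycles. The ``mixed'' configurations you worry about simply do not occur. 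Once you know this, your own computation $k_+-k_-=k\Rightarrow k_++k_-\equiv k\pmod 2$ applies to \emph{every} contribution to $[z^pw^q]P$ at an extremal $(p,q)=k(i,j)$: an MTSF contributes to this coefficient only if its cycles have class $\pm(i,j)$, and then the coefficient of $(z^iw^j)^k$ in $(2-z^iw^j-z^{-i}w^{-j})^{k'}$ is $(-1)^k\binom{2k'}{k'+k}\ge 0$ up to the fixed sign $(-1)^k$. Hence $[z^pw^q]P = (-1)^k\sum_{k'\ge k}\binom{2k'}{k'+k}N_{k'}$ with $N_{k'}\ge 0$, strictly positive by your $\gamma_0$. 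With that fact supplied, your proof closes and is in substance the same as the paper's.
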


\begin{proof}
From Theorem \ref{bundlelap} we have
\be\label{torusPsum}P(z,w) = \sum_{\text{MTSFs $\gamma$}} (2-z^iw^j-z^{-i}w^{-j})^kwt(\gamma),\ee
where the sum is over MTSFs $\gamma$ on the torus graph $\G_1$, $k$ is the number of cycle components of $\gamma$
and $(i,j)$ is the homology class of any such component. The boundary points of $N(P)$ are then precisely the points $z^{ki}w^{kj}=z^pw^q$
where $(p,q)$ are the homology classes of MTSFs on $\G_1$ which are maximal in some direction in homology.
By Lemma \ref{unitflows} these are exactly the boundary points of $N$.
\end{proof}

Let us fix a primitive homology class $(i,j)$ (one with $i,j$ relatively prime). Starting from (\ref{torusPsum}) and expanding, let 
$$P_{i,j}(z^iw^j) = \sum_{k>0} C_{ki,kj}z^{ki}w^{kj}$$ consist of the monomials of $P$ with terms which are powers of $z^iw^j$.
Setting $U=2-z^iw^i-z^{-i}w^{-j}$ we can rewrite $P_{i,j}(z^iw^j)$ as a polynomial in $U$:
$$P_{i,j}(z^iw^j)+c_{i,j}=Q_{i,j}(U) = \sum_{k\ge1} D_kU^k,$$
where $c_{i,j}$ is a constant (not depending on $z$ or $w$) and $D_k$ is the weighted sum of MTSFs with total homology class $(ki,kj)$.
Note that the coefficients $D_k$ are obtained from the $C_{ki,kj}$ via a linear map (Lemma \ref{linear} above).

\subsection{Kernel of $\mu_{s,t}$}

The \emph{Ronkin function} $R(x,y)$ of a bivariate polynomial $P$ is defined as
$$R(x,y) = \int_{|z|=e^x}\int_{|w|=e^y}\log|P(z,w)|\frac{dz}{2\pi i z}\frac{dw}{2\pi i w}.$$
In \cite{Mikh, PR} the following properties of the Ronkin function $R$ are shown.
$R$ is a convex $C^1$ function whose gradient takes values in $N=N(P)$. The map
$\nabla R$ is surjective onto the interior of $N$, and constant on the components of the complement of the amoeba. 
For simple Harnack curves, $\nabla R$ is
nonsingular on the interior of the amoeba of $P$, and maps the interior of the amoeba of $P$
bijectively to the interior of $N\setminus S$, where $S$ is a subset of the integer points in $N$.

We let $\sigma:N\to\R$ be the Legendre dual to $R$:
\be\label{surfacetension1}
\sigma(s,t) = \min_{(x,y)\in\R^2} R(x,y)-s x-t y
\ee
 The surface tension is strictly convex \cite{KOS}.  

Now let $K=K(z,w)=Cd(\Delta(z,w)+D_M)^{-1}d^*$ be the transfer current operator acting on $\Omega_1(z,w)$. It is a matrix
indexed by the edges in $\G_1$, with entries which are rational functions of $z$ and $w$.
For two edges $e_1,e_2$ in $\G$, let $[e_1],[e_2]$ denote their images in the fundamental domain for
$\G_1$,
and $(x_1,y_1),(x_2,y_2)\in\Z^2$ the translations from $e_1,e_2$ to $[e_1],[e_2]$ respectively.
Then we have the following transfer current formula for $\ST$, essentially due to Burton and Pemantle.
\begin{theorem}[Burton and Pemantle \cite{BP}]
$$K_{e_1,e_2}= \iint_{S^1\times S^1} K_{[e_1],[e_2]}(z,w)z^{x_1-x_2}w^{y_1-y_2}\frac{dz}{2\pi i z}\frac{dw}{2\pi i w}.$$
\end{theorem}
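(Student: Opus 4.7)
The approach is to obtain the formula by Fourier-inverting the finite-graph Burton--Pemantle kernels on $\G_n=\G/n\Z^2$ and passing to the limit $n\to\infty$. This is the doubly-periodic analog of the argument used for the strip graph in Theorem \ref{infkernel}, and it parallels the standard derivation of transfer-current formulas on $\Z^2$-periodic graphs.

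First I would set up the diagonalization. The group $(\Z/n\Z)^2$ acts on $\G_n$ by translations, and its characters are indexed by pairs $(\zeta,\eta)$ of $n$-th roots of unity. Each character space is canonically identified with $\Omega_0(\zeta,\eta)$, so the Laplacian $\Delta_n$ on $\G_n$ is block-diagonal with blocks $\Delta(\zeta,\eta)$ as $(\zeta,\eta)$ ranges over these $n^2$ pairs. In the massive case the same holds for $\Delta_n+D_M$. For pairs where $P(\zeta,\eta)\neq 0$ the inverse is given by $(\Delta(\zeta,\eta)+D_M)^{-1}$, so on the orthogonal complement of the zero modes of $\Delta_n+D_M$ we get
\begin{equation*}
[K_n]_{e_1,e_2}=\frac{1}{n^2}\sum_{\zeta^n=1,\,\eta^n=1}K_{[e_1],[e_2]}(\zeta,\eta)\,\zeta^{x_1-x_2}\eta^{y_1-y_2},
\end{equation*}
after expanding a $(\zeta,\eta)$-periodic edge function in terms of its value on the fundamental domain. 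This is the desired Riemann sum for the integral in the statement.

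Next I would pass to the limit. The integrand $K_{[e_1],[e_2]}(z,w)$ is a rational function whose only possible singularities on $\T^2=\{|z|=|w|=1\}$ come from zeros of $P(z,w)$. By Theorem \ref{harnackthm}, $\{P=0\}$ is a simple Harnack curve, so its intersection with $\T^2$ is of real dimension $\le 1$ (in fact a finite union of real arcs), and the order of vanishing along such an arc is controlled. A standard estimate shows that the entries of $K(z,w)$ are integrable on $\T^2$ against the Haar measure $\frac{dz}{2\pi iz}\frac{dw}{2\pi iw}$: the factor $\Delta(z,w)+D_M$ in the denominator is cancelled to leading order by the zero of $d(z,w)$ of the corresponding order near the vanishing locus (this is the same cancellation that produces the finite transfer current on $\Z^2$ for the usual UST). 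Consequently the Riemann sums converge to the integral, uniformly in $(e_1,e_2)$ for bounded $(x_1-x_2,y_1-y_2)$. The lone subtlety is what to do with the finitely many terms where $P(\zeta,\eta)=0$, which contribute only $O(1/n^2)$ discrepancy and vanish in the limit; in the massless case the constant zero-mode at $(\zeta,\eta)=(1,1)$ is handled just as in the standard UST proof by observing that it does not contribute to the current (its image under $d$ is zero).

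Finally I would identify the limiting measure. By Burton--Pemantle (Theorem \ref{BPthm}) applied to $\G_n$, the finite kernel $K_n$ represents the spanning-tree or MTSF measure of slope $([sn]/n,[tn]/n)$, and we have already established in the preceding theorem that these measures converge weak-$*$ to $\mu_{s,t}$. Convergence of kernels together with weak-$*$ convergence forces the limit kernel to represent $\mu_{s,t}$, completing the proof. The main obstacle is the integrability/pole analysis on $\T^2$ and the treatment of the $(1,1)$ zero mode in the massless case; once the Harnack property is invoked these are routine, but they are what makes the double integral genuinely well defined.
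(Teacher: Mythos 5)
The paper does not actually prove this statement: it is quoted as a result of Burton and Pemantle \cite{BP} (note the ``essentially due to Burton and Pemantle'' immediately preceding), and what the paper proves in this section is the extension to other contours, Theorem \ref{Kst}. So there is no in-paper proof to compare against.

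That said, your reconstruction is the right one and parallels the paper's own proof of the strip analogue, Theorem \ref{infkernel}: block-diagonalize the periodic Laplacian on $\G_n=\G/n\Z^2$ by $(\Z/n\Z)^2$-characters, express the finite transfer current $K_n$ as a discrete Fourier sum over $n$-th roots of unity, and let $n\to\infty$. Two corrections are warranted. First, invoking Theorem \ref{harnackthm} is both logically awkward and unnecessary: the Harnack property is only established later (Section \ref{Harnacksection}), and for this formula over the unit torus the only possible singularity of the integrand is the node at $(z,w)=(1,1)$ in the massless case (there is none at all if $M\not\equiv 0$). Burton--Pemantle's original argument handles this directly: the zero mode of $\Delta(1,1)$ is the constant function, which is annihilated by $d$, so the numerator of $K(z,w)$ vanishes to the same order as $P(z,w)$ near $(1,1)$. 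No global structure of $\{P=0\}$ is required. Second, for a simple Harnack curve the intersection with any torus $\{|z|=r_1,\,|w|=r_2\}$ is at most two conjugate points, not ``a finite union of real arcs''; your integrability conclusion survives, but the description of the singular set should be corrected. With these two points repaired the argument is sound, and it is the natural two-dimensional version of the proof the paper gives for Theorem \ref{infkernel}.
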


We extend this statement in a very simple way, by changing the contour of integration:

\begin{theorem}\label{Kst}
Let $(s,t)$ be a point in the interior of $N$. Let $(x,y)\in\R^2$ be a point which satisfies $\nabla R(x,y) = (s,t)$, where $R$ is the Ronkin function of $P$. Then the kernel $K^{s,t}$ defined by
$$(K^{s,t})_{e_1,e_2}= \iint_{|z|=e^x,~|w|=e^y} K_{[e_1],[e_2]}(z,w)z^{x_1-x_2}w^{y_1-y_2}\frac{dz}{2\pi i z}\frac{dw}{2\pi i w}.$$
is the determinantal kernel for $\mu_{s,t}$ (the limit of the kernels of the $\mu^{(n)}_{[ns],[nt]}$).
\end{theorem}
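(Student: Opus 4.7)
The approach closely parallels that of Theorem~\ref{infkernel} for strip graphs, with the contour of integration now being the two-torus $\{|z|=e^x,\,|w|=e^y\}$ singled out by the Ronkin condition $\nabla R(x,y)=(s,t)$. First I would establish a two-periodic analog of Theorem~\ref{fixedz}: for real $(z,w)$ with signs chosen so that $2-z^{i}w^{j}-z^{-i}w^{-j}\ge 0$ for every primitive homology class $(i,j)$ supporting an MTSF, the Schur-complement calculation of Theorem~\ref{fixedz} combined with Theorem~\ref{bundlelapmass} shows that $K_n(z,w)=Cd(\Delta_{\G_n}(z,w)+D_M)^{-1}d^*$ is the kernel of a determinantal probability measure $\mu_n(z,w)$ on MTSFs of $\G_n$, weighting an MTSF $\gamma$ with $k$ cycles of homology class $(i,j)$ by $wt(\gamma)\prod_v M_v\cdot(2-z^{i}w^{j}-z^{-i}w^{-j})^{k}$.

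Second, I would Fourier decompose $\Delta_{\G_n}$ over the characters of $(\Z/n\Z)^{2}$. Since $\Delta_{\G_n}(Z,W)$ splits as the direct sum of $\Delta_{\G_1}(\zeta,\eta)$ over the $n^{2}$ pairs with $\zeta^{n}=Z,\ \eta^{n}=W$, one obtains
\[
[K_n(Z,W)]_{e_1,e_2}\;=\;\frac{1}{n^{2}}\sum_{\substack{\zeta^{n}=Z\\ \eta^{n}=W}}[K(\zeta,\eta)]_{[e_1],[e_2]}\,\zeta^{x_1-x_2}\,\eta^{y_1-y_2},
\]
exactly as in the 1D case. Taking real $Z,W$ with $|Z|=e^{nx}$, $|W|=e^{ny}$ (and the signs from the previous paragraph) places the $\zeta,\eta$ uniformly on the circles of radii $e^x,e^y$, so the right-hand side is the natural Riemann sum for the torus integral defining $K^{s,t}$.

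Third, to identify the limiting measure with $\mu_{s,t}$ I would carry out the 2D analog of the growth-rate argument of Theorem~\ref{growth}. The factorization $P_n(Z,W)=\prod_{\zeta^n=Z,\,\eta^n=W}P_1(\zeta,\eta)$ together with the definition of the Ronkin function gives
\[
\tfrac{1}{n^{2}}\log|P_n(e^{nx},e^{ny})|\;\longrightarrow\;R(x,y).
\]
Comparing with the MTSF expansion of $P_n$ coming from (\ref{torusPsum}) and using strict convexity of $\sigma$ (the Legendre dual of $R$), the exponentially dominant term in the MTSF sum at $(|Z|,|W|)=(e^{nx},e^{ny})$ comes from homology classes of slope $\nabla R(x,y)=(s,t)$. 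Hence $\mu_n(Z,W)$ concentrates on MTSFs of slope $(s,t)$; by tail triviality of determinantal processes cited from \cite{Lyons} in the preceding theorem, kernel convergence then promotes to weak convergence of the measures, identifying the limit with $\mu_{s,t}$.

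The main obstacle is uniform control of the Riemann sum near intersections of the integration torus with the spectral curve $\{P=0\}$, where $K(\zeta,\eta)$ has poles; for $(s,t)\in\text{int}(N)$, Mikhalkin's theorem places $(x,y)$ in the interior of the amoeba of $P$ and such intersections do occur. Here the simple Harnack property (Theorem~\ref{harnackthm}) is decisive: the torus meets $\{P=0\}$ in at most two, complex-conjugate points, and a local analysis shows that the singularity of $K$ there is integrable and that the conjugate-pair symmetry makes the leading Riemann-sum errors cancel. This integrability and cancellation are the only essentially new analytic ingredients beyond a direct transcription of the 1D proof.
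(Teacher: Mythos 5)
Your general strategy---relate $K_n$ on $\G_n$ to $K$ on $\G_1$ by a Fourier/block decomposition, identify the Riemann sum with the torus integral, and then prove concentration of $\mu_n$ at slope $(s,t)$ via a Ronkin/surface-tension estimate, finishing with tail triviality as in the preceding theorem---is the same broad outline as the paper. But the first step contains a genuine gap. You claim to choose \emph{real} $(z,w)$ with $|z|=e^x$, $|w|=e^y$ and signs arranged so that $2-z^iw^j-z^{-i}w^{-j}\ge 0$ for every primitive homology class $(i,j)$ occurring in an MTSF. This is in general impossible: if the flow polygon $N$ contains, say, $(1,0)$, $(0,1)$, and $(1,1)$ (which is typical), you would need $z<0$, $w<0$, and $zw<0$ simultaneously, a contradiction. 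Positivity of all the weights holds automatically on the unit torus $|z|=|w|=1$ (where each factor is $2-2\cos\theta\ge 0$), but there is no positivity-preserving real locus at a different modulus, so the two-dimensional analog of Theorem~\ref{fixedz} cannot be stated the way you state it, and the asserted probability measure $\mu_n(z,w)$ is not well defined off the unit torus.

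The paper's route around this is the idea your proposal is missing. It first proves the determinant identity $\det[K(z,w)_S^S]=Z(e_1,\dots,e_m)/Z$ on the unit torus, where it is a genuine probabilistic identity, and then extends it to all $(z,w)$ by \emph{analytic continuation of rational functions}---off the unit torus the ``weights'' $wt(\gamma)(2-z^iw^j-z^{-i}w^{-j})^k$ need not be nonnegative, and no claim to a probability measure at fixed $(z,w)$ is made. For the concentration step one then works on $\G_n$ at $|z|=e^x$, $|w|=e^y$, with $z^{i_0}w^{j_0}<0$ and $\arg z,\arg w=O(1/n)$, uses the sharp estimate $D_{p_0,q_0}=e^{n^2(-\sigma(s,t)+o(1))}$ from \cite{KOS} (this is where the Harnack property enters) together with strict convexity of $\sigma$ to show that the terms with $(p,q)$ within $\eps n$ of $(p_0,q_0)$ dominate in \emph{absolute value}, and then---crucially---checks that these dominant terms all have approximately the same \emph{argument}, so that no cancellation spoils the dominance. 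That argument-alignment observation, not positivity, is what substitutes for the one-dimensional choice $z<0$. Your remark about integrable poles of $K$ at the (at most two, complex-conjugate) intersections of the torus with $\{P=0\}$ is correct and worth having in mind, but it is a secondary technical point; the primary missing ingredient is how to handle non-positive weights away from the unit torus.
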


\begin{proof}
We first discuss the probabilistic meaning of $K(z,w)$. 
On $\G_1$ let 
\be\label{Z}Z = \sum_{\text{MTSFs $\gamma$}}wt(\gamma)(2-z^iw^j-z^{-i}w^{-j})^k\ee
and for any finite set of edges $S=\{e_1,\dots,e_m\}$, let 
$$Z(e_1,\dots,e_m) = \sum_{\text{MTSFs $\gamma$ containing $S$}}wt(\gamma)(2-z^iw^j-z^{-i}w^{-j})^k.$$
We claim that, taking the submatrix of $K(z,w)$ with rows and columns indexed by $S$,
\be\label{Z/Z}\det [K(z,w)_S^S]=\frac{Z(e_1,\dots,e_m)}{Z}.\ee
When $|z|=1=|w|$, and $(z,w)\ne(1,1)$, 
$K(z,w)$ is the determinantal kernel for a probability measure on MTSFs of $\G_1$, where a MTSF $\gamma$ has 
probability proportional to its weight $wt(\gamma)(2-z^iw^j-z^{-i}w^{-j})^k\ge0.$ 
Thus (\ref{Z/Z}) holds when $|z|=1=|w|$. By analytic continuation, the identity of rational functions (\ref{Z/Z}) holds for all
$z,w$, even though these expressions are not necessarily probabilities.

We can write 
\be\label{Zsum}Z=\sum_{(p,q)}D_{p,q}(2-z^iw^j-z^{-i}w^{-j})^k,\ee
where the sum is over nonzero homology classes (with one term for each pair $(p,q),(-p,-q)$), and where
$(p,q)=(ki,kj)$ and $i,j$ are relatively prime. Note $D_{p,q}\ge 0$, since it is a sum of weights of MTSFs.
A similar expression holds for $Z(e_1,\dots,e_n)$, and $D_{p,q}(e_1,\dots,e_m)$ is the sum of weights
of MTSFs containing edges $e_1,\dots,e_m$. 

Fix $(s,t)$ in the interior of $N$ and choose $x,y$ as in the statement.  Assume that $x,y>0$; the other cases are similar.
For each $n$ let $p_0=[ns]$ and $q_0=[nt]$; let $(p_0,q_0)=(k_0i_0,k_0j_0)$ where $k_0=GCD(p_0,q_0)$. 
By adjusting $s$ and $t$ by $o(1)$ we may assume for convenience that $|i_0|,|j_0|$ are of order $n$, that is, $k_0$ is of constant order.
Choose $z,w$ so that $|z|=e^x,~|w|=e^y$ and $z^{i_0}w^{j_0}<0$ where both $z$ and $w$ have small arguments $\arg(z),\arg(w)=O(1/n).$
Now consider the expressions (\ref{Z}) and (\ref{Zsum}) for the graph $\G_n$ rather than $\G_1$.
We claim that as $n\to\infty$, $Z$ concentrates on ERSFs of slope $s,t$ in the following sense. For any $\eps>0$,
in the sum (\ref{Zsum}) the terms $(p,q)$ with $|p-p_0|<\eps n, |q-q_0|<\eps n$ all have approximately the same argument
and dominate all the remaining terms: the sum of the remaining
terms is negligible compared to the sum of these.

The same holds for $Z(e_1,\dots,e_m)$, and the conclusion will follow once we prove this claim.

From the fact that $P=0$ is a simple Harnack curve, it was shown in \cite{KOS} that the coefficients of $P_n$ (which is $Z$ above in (\ref{Zsum}))
satisfy $$[z^{p_0}w^{q_0}]P_n(z,w) = \pm e^{n^2(-\sigma(s,t)+o(1))}.$$
This implies by Lemma \ref{linear} above that $D_{p_0,q_0}=e^{n^2(-\sigma(s,t)+o(1))}$
(because the coefficients in Lemma \ref{linear} are exponential in only \emph{linear} functions of $n$, the sum (\ref{Ddef})
in this setting is dominated by the first term).
Since $\sigma$ is the Legendre dual of $R$, and is strictly convex, the sum of the terms 
$$|D_{p,q}(2-z^iw^j-z^{-i}w^{-j})^k| = |D_{p,q}z^{p}w^{q}|(1+o(1))$$ 
with $\|(p,q)-(p_0,q_0)\|<\eps n$ 
has larger exponential growth rate
than any of the other terms in (\ref{Zsum}). So for $n$ sufficiently large they dominate the sum (\ref{Zsum}).
Moreover these terms have all approximately the same argument:
$$\arg(z^pw^q) = \arg(z^{p_0}w^{q_0}z^{O(\eps n)}w^{O(\eps n)}) =\arg(z^{p_0}w^{q_0})+O(\eps)$$
by our condition on $\arg z,\arg w$. 
So we can remove the absolute values and conclude that
the sum of the terms $D_{p,q}(2-z^iw^j-z^{-i}w^{-j})^k$ with $\|(p,q)-(p_0,q_0)\|<\eps n$ dominates the sum (\ref{Zsum}). 

Thus with probability tending to one the configuration concentrates on ERSFs of slope $(s,t)$. 
\end{proof}

\section{Harnack property and edge correlations}\label{Harnacksection}

In this section we prove Theorem \ref{harnackthm}. 

\subsection{Minimal graphs and $Y-\Delta$ transformations}

Two planar graphs $\G,\G'$ with edge conductances and vertex weights are \emph{electrically equivalent} if they are related by a sequence of \emph{electrical transformations}, which are local rearrangements of the graph, 
see Figure \ref{ETs} and \cite{KW}. For example in the first transformation, one can remove a vertex of degree two, replacing it with an edge between its neighbors and update the
weights and conductances as shown. All these transformations preserve the measure on rooted spanning forests,
in the sense that there is a local weight-preserving mapping from rooted spanning forests of the ``before" graph with those on the 
``after" graph. 

\begin{figure}[htbp]
\begin{center}\includegraphics[width=1.5in]{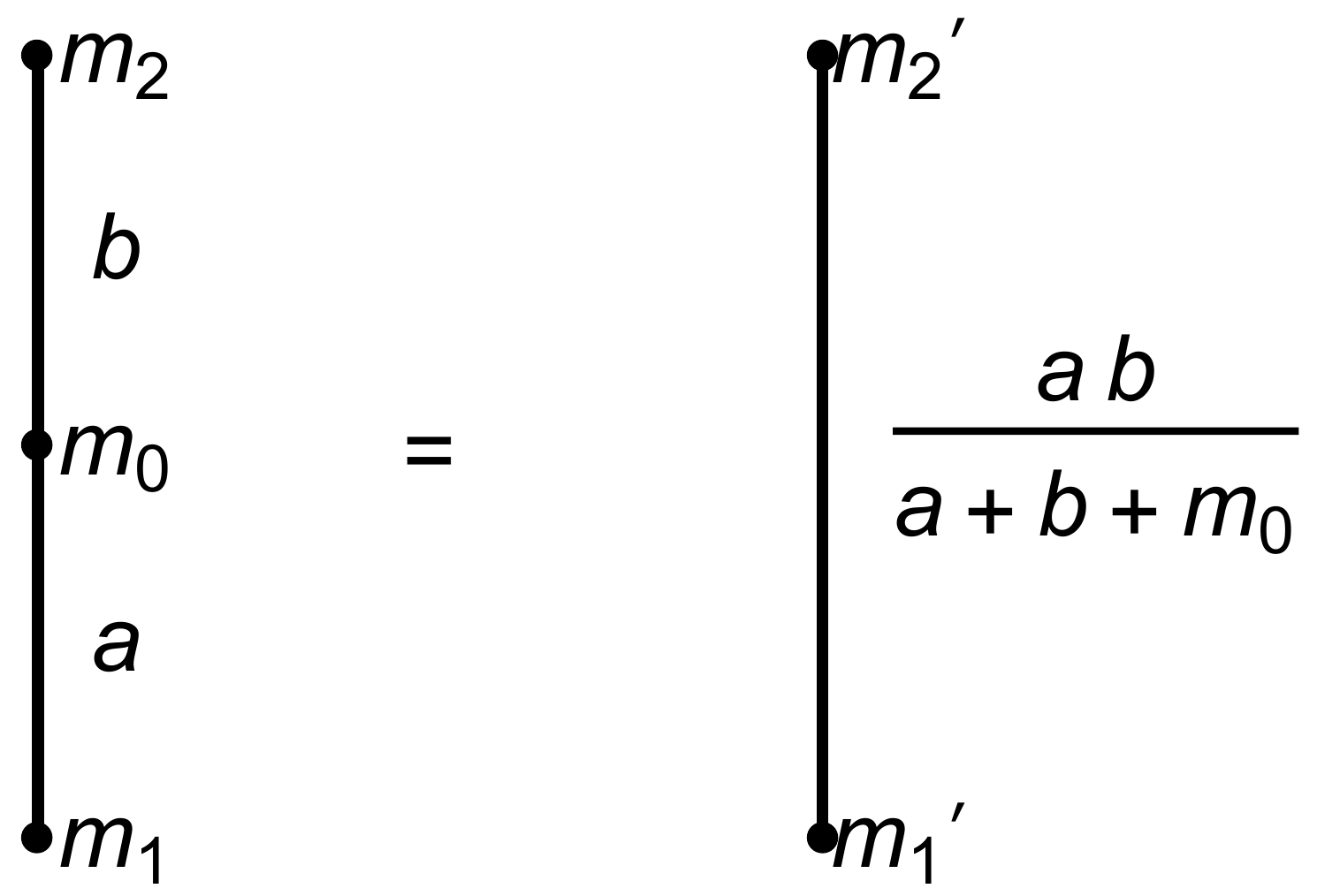}\hskip1in\includegraphics[width=1.5in]{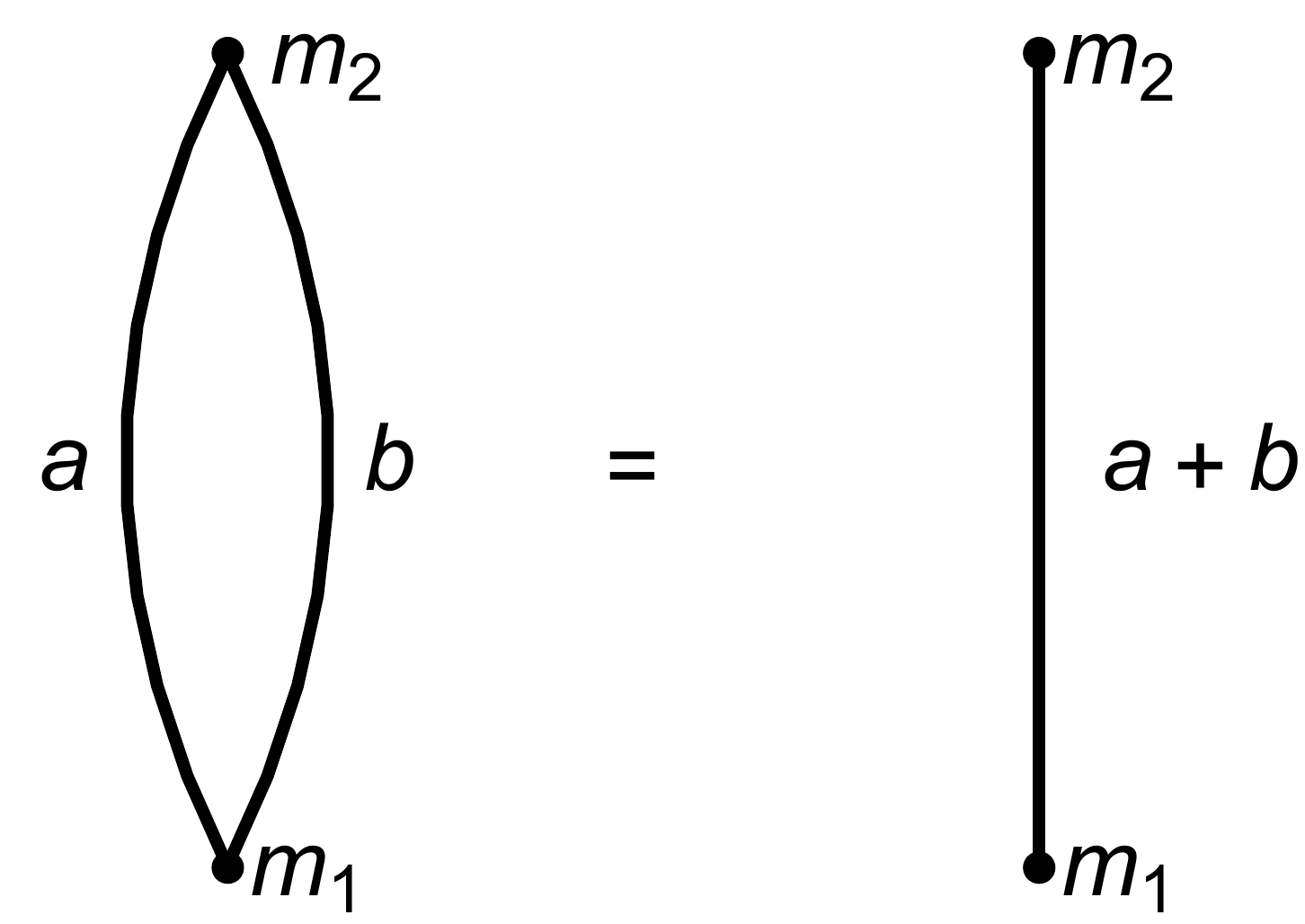}\end{center}
\begin{center}\includegraphics[width=1.5in]{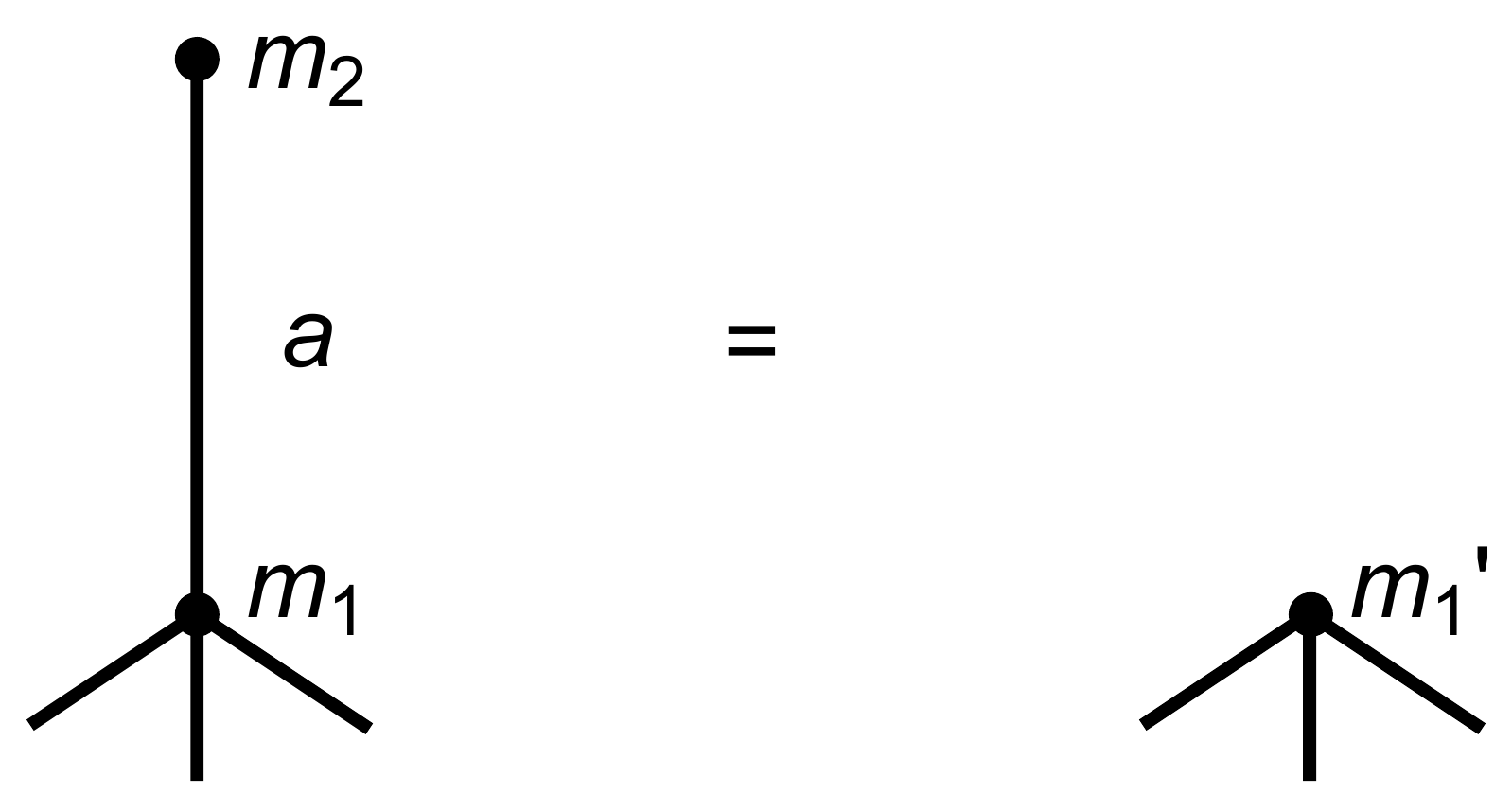}\hskip1in\includegraphics[width=2.5in]{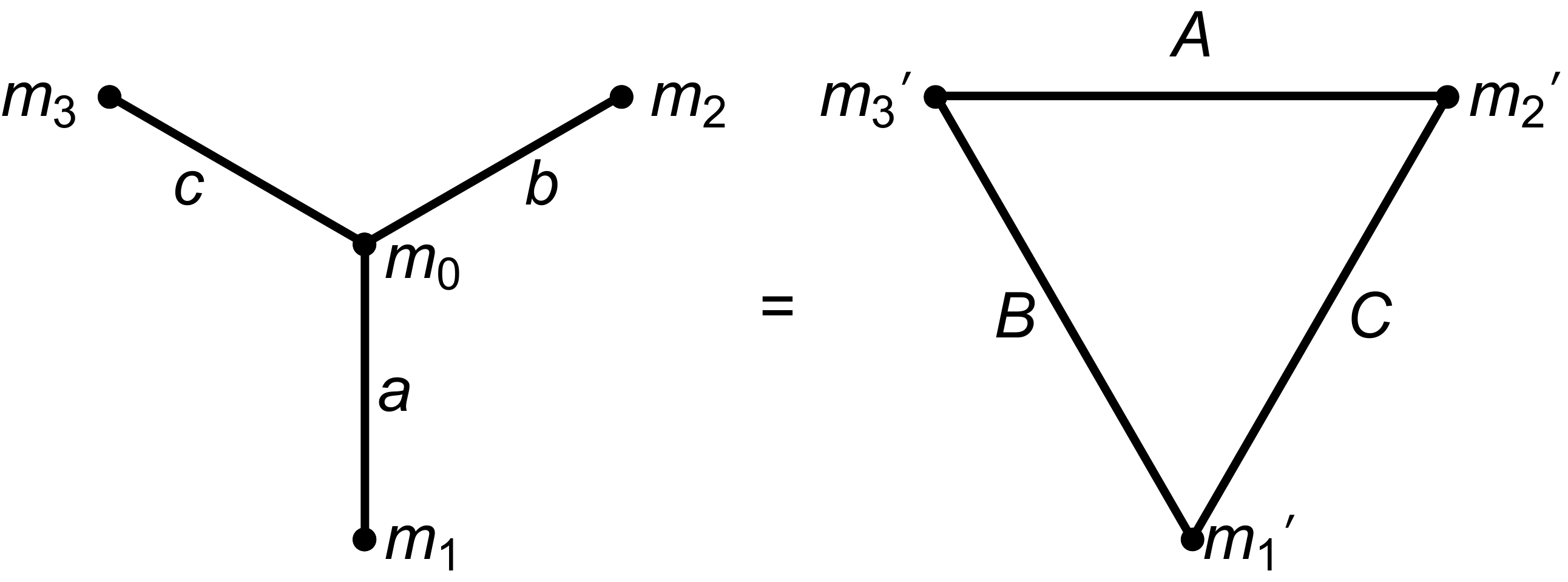}\end{center}
\caption{\label{ETs}Massive electrical transformations: series move, parallel move, removal of dead branch, and star-triangle transformation. 
Here the $a,b,c,A,B,C$ are conductances and the $m_i$ are
the vertex weights. For the first transformation we have
$m_1' = m_1+\frac{am_0}{a+b+m_0}$ and $m_2'=m_2+\frac{bm_0}{a+b+m_0}$. 
For the third transformation we have $m_1' = m_1+\frac{am_2}{a+m_2}$.
For the fourth transformation we have $A=\frac{bc}{a+b+c+m_0}, m_1' =m_1+\frac{am_0}{a+b+c+m_0}$ and symmetrically for $B,C,m_2,m_3$. }
\end{figure}

A graph on a torus is said to be \emph{minimal} if it has the fewest edges in its electrical equivalence class.
Minimal graphs on a torus were characterized in \cite{GK} as those having the property that, on the cover $\tilde\G$ in $\R^2$,
the zig-zag paths of $\tilde\G$ are not closed loops, do not self-intersect, and two zig-zag paths intersect at most once. 
(Even though in \cite{GK} we considered only the case $M\equiv 0$, the definition and characterization of minimal
graphs is topological and so extends to the current case as well.)

Electrically equivalent graphs on the torus have the same characteristic polynomial $P(z,w) = \det(\Delta(z,w)+D_M)$, up to a
multiplicative constant, hence the same spectral curve $P=0$. In particular when studying spectral curves of graphs on a torus,
it suffices to consider minimal graphs.

\begin{lemma}\label{bdynoM} For a minimal graph $\G$ on a torus the coefficients of $P(z,w)$ on the boundary $\partial N$ 
do not depend on $M$.
\end{lemma}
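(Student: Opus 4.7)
}

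The plan is to combine Theorem \ref{bundlelapmass} with the strand characterization of minimal graphs. Expanding the characteristic polynomial via Theorem \ref{bundlelapmass} gives
$$P(z,w)=\sum_{\gamma}wt(\gamma)\Big(\prod_{v\in R(\gamma)}M_v\Big)\prod_{\eta\in C(\gamma)}\bigl(2-z^{i_\eta}w^{j_\eta}-z^{-i_\eta}w^{-j_\eta}\bigr),$$
where $\gamma$ ranges over MTSFs on $\G_1$, $R(\gamma)$ denotes the root set, and each cycle $\eta\in C(\gamma)$ has homology $(i_\eta,j_\eta)$. Since $M$ enters the expansion only through $\prod_{v\in R(\gamma)}M_v$, a coefficient of $P(z,w)$ is independent of $M$ if and only if every MTSF contributing to it has $R(\gamma)=\emptyset$, i.e.\ is a CRSF.

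I would then identify which MTSFs contribute to boundary monomials. A vertex $(p,q)=(ki,kj)$ of $N$ with $(i,j)$ primitive is the unique extremum in direction $(i,j)$, so the coefficient of $z^pw^q$ in $P$ receives contributions only from MTSFs $\gamma$ with exactly $k$ cycles all of homology $(i,j)$, picking up the $(-z^iw^j)^k$ term from each cycle factor. For a point on an edge of $\partial N$ joining two such vertices, the analogous analysis allows cycles of homology $(i,j)$ and $(-i,-j)$ whose signed sum matches the extremal direction, but again the total number of cycles and their homology classes are forced by extremality. Thus the task reduces to the structural claim:
\begin{center}
$(\ast)$ \emph{Every MTSF $\gamma$ of the minimal graph $\G_1$ whose cycle set saturates a boundary direction of $N$ is a CRSF.}
\end{center}

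The main step, and the main obstacle, is $(\ast)$. I would prove it using the characterization of minimal graphs from \cite{GK}: on the universal cover $\tilde\G$, the zig-zag paths are bi-infinite simple arcs, any two intersect in at most one point, and each is classified by a primitive homology class. The extremal CRSFs in direction $(i,j)$ are built from the zig-zag paths of homology $(i,j)$, which fix the number $k$ of disjoint $(i,j)$-cycles achievable and prescribe the topology of their complement in $\G_1$. Suppose toward contradiction that an extremal MTSF $\gamma$ has a rooted finite component $T$ with root $v_0$. Then $T$ is contained in one of the annular regions bounded by two consecutive cycles of $\gamma$, and there is an edge of $\G_1\setminus\gamma$ joining $T$ to its complement. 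Using the rigidity of strand intersections (any two strands meet at most once), I would perform a local modification: add such an edge and remove an edge along the unique resulting cycle through $v_0$; the only way this produces a valid MTSF with the same $k$ homology-$(i,j)$ cycles is if it strictly reduces $|R(\gamma)|$, or if it creates an additional $(i,j)$-cycle, contradicting the extremality of $k$. Iterating eliminates all roots, showing that the only extremal MTSFs are CRSFs and proving $(\ast)$.

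With $(\ast)$ in hand, every contributing $\gamma$ has $\prod_{v\in R(\gamma)}M_v=1$, so the boundary coefficient becomes a signed sum of $wt(\gamma)$ over extremal CRSFs, which manifestly does not depend on the vertex weights $M$. The delicate part is the local modification argument in $(\ast)$: I expect this is where the minimality hypothesis (equivalently, the simple Harnack property noted in the text) is essential, since on a non-minimal graph a strand can self-intersect or two strands can cross twice, creating the flexibility needed to accommodate a rooted component in an otherwise extremal configuration.
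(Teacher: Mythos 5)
Your reduction is exactly the paper's: expand $P(z,w)=\det(\Delta(z,w)+D_M)$ via Theorem~\ref{bundlelapmass}, observe that $M$ enters only through the product $\prod_{v\in R(\gamma)}M_v$ over roots, and conclude that a coefficient is $M$-independent precisely when every MTSF contributing to it has no rooted components. You also correctly identify that the proof must show a boundary-extremal MTSF on a minimal graph cannot have rooted components — your claim $(\ast)$. Up to this point you and the paper agree completely.

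Where you diverge is in how $(\ast)$ is established, and this is where your argument has a real gap. The paper does not re-derive the structural fact: it cites \cite{GK}, where it is shown that on a minimal torus graph, for any boundary point of $N$ the corresponding CRSFs have cycles passing through \emph{every} vertex. That is strictly stronger than $(\ast)$: it rules out not just rooted-tree components but also any tree-like part dangling off a cycle. With this in hand the lemma is immediate — an MTSF with a root has vertices off cycles, so its cycle part cannot have a boundary homology class. You instead try to prove $(\ast)$ from scratch via a local edge swap: add an edge from a rooted tree $T$ to its complement, delete an edge on the created cycle, and claim this either reduces $|R(\gamma)|$ or contradicts extremality. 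As written this is not a proof: after the swap the new edge may merge $T$ into a cycle-rooted component in a way that shifts, rather than removes, the rooted structure; the created cycle need not be noncontractible, and you have not justified why the number of cycles or their homology is preserved. You also never actually invoke the zig-zag path characterization in the modification step — you state it but the swap argument is combinatorial and makes no contact with it. The acknowledgment that ``I expect this is where the minimality hypothesis is essential'' correctly flags the gap but does not close it. In short: the skeleton matches the paper, but the load-bearing structural claim is taken as an exercise you sketch rather than prove, while the paper imports it ready-made from \cite{GK}. If you want a self-contained argument, the right move is to reconstruct the \cite{GK} statement about boundary-extremal CRSFs covering all vertices, which is a cleaner invariant to track than the ad hoc edge swap.
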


\begin{proof}
In \cite{GK}, which dealt with the case $M\equiv0$, it was shown that for minimal graphs, for every boundary point on $N$, all corresponding CRSFs have a cycle
passing through every vertex. Thus no MTSF which has roots can have a homology class on $\partial N$. By (\ref{torusPsum}), the boundary coefficients of $P$ only arise from CRSFs, not MTSFs, and thus do not depend on $M$.
\end{proof}

\subsection{Proof of Harnack property}

We prove Theorem \ref{harnackthm}. This proof strategy parallels the proof in \cite{KO1} for spectral curves in the dimer model.

We need a few facts about simple Harnack curves, which can be found in \cite{Mikh}. 
First, simple Harnack curves $P=0$ with given Newton polygon $N$
are characterized by having the maximal
number, $g+1$, of real components (one for each of the $g$ interior integer points of $N$, plus one), and the condition
that these components have a certain topological arrangement in $\R P^2$: none is surrounded by any other except for the one corresponding to the boundary of $N$ which
surrounds all others. Simple Harnack curves with given Newton polygon $N$
are the closure of an open set in the space of all real affine curves (defined by real polynomials with Newton polygon $N$), whose boundary
consists only in the following types of degenerations: when an oval (a real component) shrinks to a point, or two 
points at $\infty$ meet, that is, one of the single-variable polynomials corresponding to an edge of $N$ has a double root.  
The set of all simple Harnack curves with all possible Newton polygons is also connected: one can degenerate a simple Harnack
curve by (after scaling so that the largest coefficient stays bounded) sending certain boundary coefficients of $P$ to zero, as a result of which certain real components will move off to $\infty$,
merging with the outer real component.
The limit will be a simple Harnack curve with smaller $N$. 

Let $\G$ be a $\Z^2$-periodic planar graph with $\Z^2$-periodic conductances.
Let $P(z,w) = P_M(z,w) = \det(\Delta(z,w)+D_M)$ be the characteristic polynomial. 
Note that $P$ is reciprocal since $\Delta(z,w)^t = \Delta(1/z,1/w)$.
When $M\equiv0$, the polynomial $P(z,w)$ was shown to define a simple Harnack curve $\{P=0\}$ in \cite{GK},
with a real node at $(z,w)=(1,1)$. 
We show that as we increase each $M_v$, the curve remains simple Harnack. Since the boundary points of $\{P=0\}$ 
only depend on the coefficients of $P$ on the boundary of $N$, and these do not depend on $M$ by Lemma \ref{bdynoM}, 
the curve cannot cease to be Harnack due to collisions of points at infinity.
Therefore it suffices to show that the ovals
do not disappear (as we vary among Harnack curves the ovals never meet each other). 

The oval at the center plays a different role than the other ovals; we deal with this oval first.
Suppose, starting from the simple Harnack curve when $M\equiv0$, we increase at least one $M_v$. 
Then we claim the real node at $(1,1)$ becomes
an oval, that is, $\{P=0\}$ no longer intersects the unit torus $\{|z|=|w|=1\}$.  
Suppose on the contrary we have a zero $(z,w)=(e^{i\theta},e^{i\phi})$. We then have 
a quasiperiodic real function $f$ on $\G$ in the kernel of $\Delta(z,w)+D_M$. If both $z,w$ are roots of unity, say $p$th and $q$th roots of unity respectively,
then on the $pq$-fold torus cover $\G_{pq}$, the function $f$ descends to a function $\tilde f$ in the kernel of $\Delta+\tilde D_M$, the 
\emph{standard} laplacian on $\G_{pq}$. 
This is impossible since this laplacian is invertible as long as some $M_v>0$. 
So we may suppose that at least one of $z,w$ is not a root of unity. From each 
vertex $v\in\G$ for which $f(v)>0$, there is a path to $\infty$ on which $f$ is increasing (that is, nondecreasing and not eventually constant).  This is impossible by quasiperiodicity of $f$, that is, by the fact that $f(v+(j,k))=z^jw^kf(v)$ for integer $(j,k)$.
This shows that for $M\not\equiv0$, the oval at the origin of $N$ does not close.

Now let us deal with the other ovals. On $\{P=0\}$, the matrix $\Delta(z,w)+D_M$ is singular; generically it has corank $1$,
and thus its cofactor matrix $Q=(\Delta(z,w)+D_M)^*$ has rank $1$. Consider the first column of $Q$; 
its entries $Q_{1i}$ are Laurent polynomials in $z,w$. At any common
zero of $P$ and $Q_{1i}$, either the entire first column of $Q$ vanishes or the $i$th row of $Q$ vanishes (due to the fact that $Q$ has rank $1$).
The points $(z,w)$ where the first column vanishes form a divisor on $\{P=0\}$, called a \emph{special divisor} in \cite{KO1}. 
We claim that
the special divisor consists of $g-1$ real points, one point on each oval of $P$, except for the central oval. 
To see this, check first in the case $\G$ is the square grid with unit conductances, and $M\equiv 0$: see Lemma \ref{harmonicquadlemma} below.
Our desired graph is a graph minor of the square grid: it can be obtained from the grid by deletions and contractions of edges. 
We can thus deform the square grid to our desired $\G$ by changing the conductances in $[0,\infty]$, with conductance
$0$ corresponding to deleting an edge and conductance $\infty$ corresponding to contracting an edge.
As we change conductances in $(0,\infty)$
the number of solutions to $Q_{11}=\dots=Q_{1n}=0$ does not change, and, since for simple Harnack curves the ovals never meet,
there remains one solution on each oval: as a consequence the curve is still simple Harnack.
Now let certain conductances degenerate to $0$ or $\infty$; in this case the curve $\{P=0\}$ can degenerate: 
some of its exterior coefficients can tend to zero,
and the Newton polygon will change. Certain real components will move off to infinity and merge with the exterior component. 
But the remaining bounded real components
will still not touch each other (by the Harnack property) and thus will still contain one real divisor point each.   
Once we have deformed the initial square grid to our desired graph $\G$ with desired conductances, we increase $M$ away from $M\equiv0$. 
Again there remains one divisor point on each oval, so the number of real components to $P=0$ can not decrease.
Therefore the curve $P$ remains simple Harnack. 
 
As a consequence of the fact that $P$ is simple Harnack, $P=0$ intersects the torus $\T=\{(z,w)~:~|z|=e^x,|w|=e^y\}$ 
either transversely at two conjugate points $(z_0,w_0),(\bar z_0,\bar w_0)$, or at a real node, or at a real point on the boundary of
the amoeba of $P$, or not at all. In the first two cases the Fourier coefficients $C_{x,y}$ of $1/P$, defined by
\be\label{FCs}C_{x,y} = \frac1{4\pi^2}\int_{(z,w)\in \T} \frac{z^x w^y}P\,\frac{dz}{iz}\,\frac{dw}{iw}\ee 
for $(x,y)\in\Z$, decay linearly as $|x|+|y|\to\infty$, 
and in the last two cases they decay geometrically,  see \cite{KOS}.
For edges $e_1,e_2\in \G_1$, by Theorem \ref{Kst} above
the kernel $K_{e_1,e_2+(x,y)}$ for $(x,y)\in\Z^2$ is a linear combination of the Fourier coefficients $C_{(x,y)+(j,k)}$ for a finite set $(j,k)$, 
with coefficients independent of $(x,y)$. Thus $K_{e_1,e_2+(x,y)}$ decays correspondingly as $|x|+|y|\to\infty$.
This implies that the edge-edge covariances decay quadratically in the first two cases and exponentially fast in the last
two cases. 
This completes the proof. \hfill{$\square$}

\begin{lemma}\label{harmonicquadlemma}
For the square graph $\Z^2/n\Z^2$ with unit conductances the special divisor has one point on each real component
of $\{P=0\}$ except for the central component.
\end{lemma}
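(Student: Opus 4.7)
The plan is to identify the special divisor for the square graph $\Z^2/n\Z^2$ with the set of points of $\{P = 0\}$ at which the Laplacian $\Delta(z,w)$ has cokernel dimension at least two, and then count such points via the Fourier decomposition.

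On $\G = \Z^2/n\Z^2$ with unit conductances, $\Delta(z,w)$ is simultaneously diagonalized by the Fourier modes $u_{\alpha,\beta}(x,y) = \alpha^x\beta^y$ indexed by pairs $(\alpha,\beta)$ with $\alpha^n = z$, $\beta^n = w$, with eigenvalue $\lambda_{\alpha,\beta} = P_1(\alpha,\beta) := 4 - \alpha - \alpha^{-1} - \beta - \beta^{-1}$. Thus $P(z,w) = \prod_{\alpha^n=z,\,\beta^n=w}\lambda_{\alpha,\beta}$. At a generic point of $\{P = 0\}$ exactly one factor vanishes, and since its kernel and cokernel are spanned by the nowhere-vanishing exponentials $u_{\alpha_0,\beta_0}$ and $u_{1/\alpha_0,1/\beta_0}$, the cofactor matrix factors as $Q = c\cdot u_{\alpha_0,\beta_0}\otimes u_{1/\alpha_0,1/\beta_0}$ with $c = n^{-2}\prod_{(\alpha,\beta)\ne(\alpha_0,\beta_0)}\lambda_{\alpha,\beta}$. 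The first column of $Q$ therefore vanishes precisely when $c$ does, equivalently when the fiber of $\phi(\alpha,\beta) := (\alpha^n,\beta^n)$ over $(z,w)$ contains two or more distinct pairs from $\{P_1 = 0\}$. Note this excludes the real node at $(1,1)$, where only $\lambda_{1,1}$ vanishes and $Q$ has rank one; the singularity of $\{P=0\}$ at $(1,1)$ comes from higher-order vanishing of the single factor $\lambda_{1,1}$ rather than from multiple factors.

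I would then enumerate such points by, for each pair of $n$th roots of unity $(\omega_1,\omega_2) \ne (1,1)$, solving the system $P_1(\alpha,\beta) = P_1(\alpha\omega_1,\beta\omega_2) = 0$. Subtraction yields $(1-\omega_1)[\alpha - 1/(\omega_1\alpha)] + (1-\omega_2)[\beta - 1/(\omega_2\beta)] = 0$, and Bernstein's theorem (mixed volume of the two diamond Newton polygons equals $4$) bounds the number of solutions by $4$. A case analysis shows that the ``diagonal'' shifts $\omega_1 = \omega_2$ and ``anti-diagonal'' shifts $\omega_1\omega_2 = 1$ produce a factorization $(\omega_i\alpha - \beta)(\alpha\beta \pm 1) = 0$ whose second factor is inconsistent with $P_1 = 0$, reducing the count to $2$ solutions per shift (and to $0$ for the exceptional case $\omega_1 = \omega_2 = -1$). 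Summing the contributions and dividing by $2$ to remove double-counting from pairs of inverse shifts gives exactly $2n^2 - 2n$ distinct special divisor points, matching $g - 1$ where $g = 2n^2 - 2n + 1$ is the number of interior integer points of the Newton polygon.

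For the bijection with non-central real ovals, I would observe that the enumerated special divisor points are real---pairs of colliding preimages come either from real $(\alpha,\beta)$ or as complex-conjugate duos under $(\omega_1,\omega_2)\leftrightarrow(\bar\omega_1,\bar\omega_2)$---and distinct points lie on distinct real components because a real node locally pinches off a single oval. The matching count $2n^2 - 2n$ then forces the bijection onto all non-central ovals. The main obstacle is confirming that no special divisor point arises from a triple or higher preimage collision (which would inflate the double-counting from pairs of inverse shifts); I expect this to follow from a genericity argument within the Bernstein enumeration, but it requires explicit verification.
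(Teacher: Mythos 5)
Your proposal is structurally sound and captures the essential content of the lemma, but the counting route is genuinely different from the paper's. Here is a comparison.

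Both approaches begin with the Fourier diagonalization: $P_n(z,w)=\prod_{\alpha^n=z,\beta^n=w}\lambda_{\alpha,\beta}$, and both end up identifying the special divisor with those real $(z,w)$ for which the Fourier fiber over $(z,w)$ meets $\{P_1=0\}$ in more than one point (equivalently, corank $\ge 2$). Your algebraic phrasing via the rank--one factorization of the adjugate, $Q=c\,u_{\alpha_0,\beta_0}\otimes u_{1/\alpha_0,1/\beta_0}$ with $c$ the product of the remaining eigenvalues, is a cleaner way to state this than the paper's argument. The paper instead observes that at a real $(z,w)$ with $(\xi,\eta)$ not both real, the \emph{real} kernel of $\Delta(z,w)$ is two--dimensional (spanned by $\operatorname{Re}(\xi^x\eta^y)$ and $\operatorname{Im}(\xi^x\eta^y)$), so one can rotate the phase of $C$ to force a kernel vector to vanish at vertex $1$; this is the real--variables counterpart of the statement that $c=0$ forces the whole adjugate to vanish. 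The two characterizations coincide.

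Where you diverge is the count. The paper parametrizes the affine curve $4-\xi-1/\xi-\eta-1/\eta=0$ rationally by $u$ (via the M\"obius images of a square), translates the realness condition on $(z,w)=(\xi^n,\eta^n)$ into the condition that $\arg\xi,\arg\eta$ be multiples of $\pi/n$, and reads off the count $2n^2-2n$ directly from the lattice of admissible $(\arg\xi,\arg\eta)$ pairs (excluding the doubly--real case $u=0$). You instead enumerate, per shift $(\omega_1,\omega_2)\in(\mu_n)^2\setminus\{(1,1)\}$, the solutions of $P_1(\alpha,\beta)=P_1(\alpha\omega_1,\beta\omega_2)=0$ via Bernstein's theorem, reduce the count to $2$ for the diagonal/antidiagonal shifts and $0$ for $(-1,-1)$ by exhibiting the spurious factor, and halve for inverse--shift double counting. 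Your arithmetic does work out to $4n^2-4n$ total and $2n^2-2n$ after dividing by two, in both parities of $n$. (A minor nit: the factorization in the \emph{diagonal} case $\omega_1=\omega_2=\omega$ actually comes out as $(\alpha+\beta)(\omega\alpha\beta-1)=0$, not of the form $(\omega\alpha-\beta)(\alpha\beta\pm1)$; the latter is the anti\-diagonal case. The useless factor is $\alpha+\beta=0$, inconsistent with $P_1=0$, and the count of $2$ is still right.)

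The caveat you flag -- ruling out triple or higher fiber collisions, which would distort the halving -- is a genuine loose end, and it is fair to note that the paper's parametrization approach sidesteps it somewhat more cleanly: the paper's set of admissible $u$'s is in bijection with admissible $(\arg\xi,\arg\eta)$ pairs, and the conjugate $(\bar\xi,\bar\eta)$ corresponds to $u\mapsto 1/\bar u$, which is excluded by restricting $u$ to the open unit disk, so each disk point gives one special divisor point. Your approach buys a more standard algebraic--geometric framing (Bernstein, shift equations), which might be easier to adapt to other graphs, but at the cost of the extra bookkeeping you identify. Also, your final ``bijection with non-central ovals'' step needs more care: for this particular curve the non-central ovals are all degenerate (real nodes), so ``one point per oval'' really is ``each oval is pinched to a node and that node is the special divisor point''; the robustness of the one--per--oval property under perturbation is what the surrounding argument in the paper's Section 5 establishes, not the lemma itself. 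The lemma only needs the count, which you get.
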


\begin{proof}
The characteristic polynomial is 
\be\label{Pproduct}P_n(z,w) = \prod_{\xi^n=z}\prod_{\eta^n=w} 4-\xi-\frac1{\xi}-\eta-\frac{1}\eta.\ee
Points on $P_n=0$ correspond to $(z,w)$-periodic harmonic functions on $\Z^2$, that is, harmonic functions $f$
with the property $f(x+n,y)=zf(x,y)$ and $f(x,y+n)=wf(x,y)$. 
Such a function has the form $f(x,y) = \text{Re}(C\xi^x\eta^y)$ where $4-\xi-1/\xi-\eta-1/\eta=0$, 
$(\xi^n,\eta^n)=(z,w)$, $z,w$ are real and $C$ is a constant. There is a two-dimensional space of such functions, obtained by varying $C$.
By choosing the argument of $C$ appropriately we can make $f$ zero at any desired vertex,
e.g. vertex $1$, as long as $\xi,\eta$ are not both real (if they are both real then $f$ is identically zero). Making $f$ zero at the first vertex corresponds to making the first column of $Q$ vanish.
Note that if $(z,w)=(1,1)$, then $4-\xi-1/\xi-\eta-1/\eta=0$ only when $(\xi,\eta)=(1,1)$ are both real, so this case is disallowed.

So it suffices to find the remaining number $g-1=2n^2-2n$ of real points $(z,w)\ne(1,1)$ on $P_n=0$. 
From equation (\ref{Pproduct}) we must thus find $(\xi,\eta)$, roots of $4-\xi-1/\xi-\eta-1/\eta=0$, 
with arguments which are multiples of $\pi/n$.

Note that $4-\xi-1/\xi-\eta-1/\eta=0$ has the rational parametrization
$$\xi=\frac{(u+a)(u+b)}{(u-a)(u-b)}~~~~~\eta=\frac{(u+a)(u-b)}{(u-a)(u+b)},$$
where $u\in\C$, $a=e^{\pi i/4}$ and $b=e^{3\pi i/4}$.
In order to find $(\xi,\eta)$ with arguments $\theta,\phi$ respectively,
we need to find $u$ such that 
\begin{align}\arg(\xi\eta) &= \arg\frac{(u+a)^2}{(u-a)^2} = \theta+\phi\label{argeqns1}\\
\arg(\xi/\eta) &= \arg\frac{(u+b)^2}{(u-b)^2} = \theta-\phi.\label{argeqns2}\end{align}

The set of points $u\in\C$ for which $(u+a)/(u-a)$ has fixed argument is a circle going through $a$ and $-a$ and with center on the line $x+y=0$.
Similarly the set of points $u$ for which $(u+b)/(u-b)$ has fixed argument is a circle going through $b,-b$ and with center on the line $x-y=0$.
For $u$ in the open unit disk, $\text{arg}(\frac{u+a}{u-a}),\text{arg}(\frac{u+b}{u-b})\in(\frac{\pi}2,\frac{3\pi}2)$.
Taking $u$ on the boundary of the unit disk leads to $\xi,\eta$ both real
so we can ignore this case. 
Thus given $\theta,\phi$, there is a unique point $u$ in the disk
satisfying (\ref{argeqns1}),(\ref{argeqns2}). As $\frac{\theta+\phi}2,\frac{\theta-\phi}2$ run over the $2n-1$ multiples of 
$\pi/2n$ in $(\frac{\pi}2,\frac{3\pi}2)$, $\theta$ and $\phi$
take $(2n-1)^2$ possible values, of which $2n^2-2n+1$ are multiples of $\pi/n$. Removing the solution $u=0$ when both angles are zero, we have $2n^2-2n$ solutions.
See Figure \ref{harmonicquads}.
\end{proof}

\begin{figure}[htbp]
\begin{center}\includegraphics[width=2in]{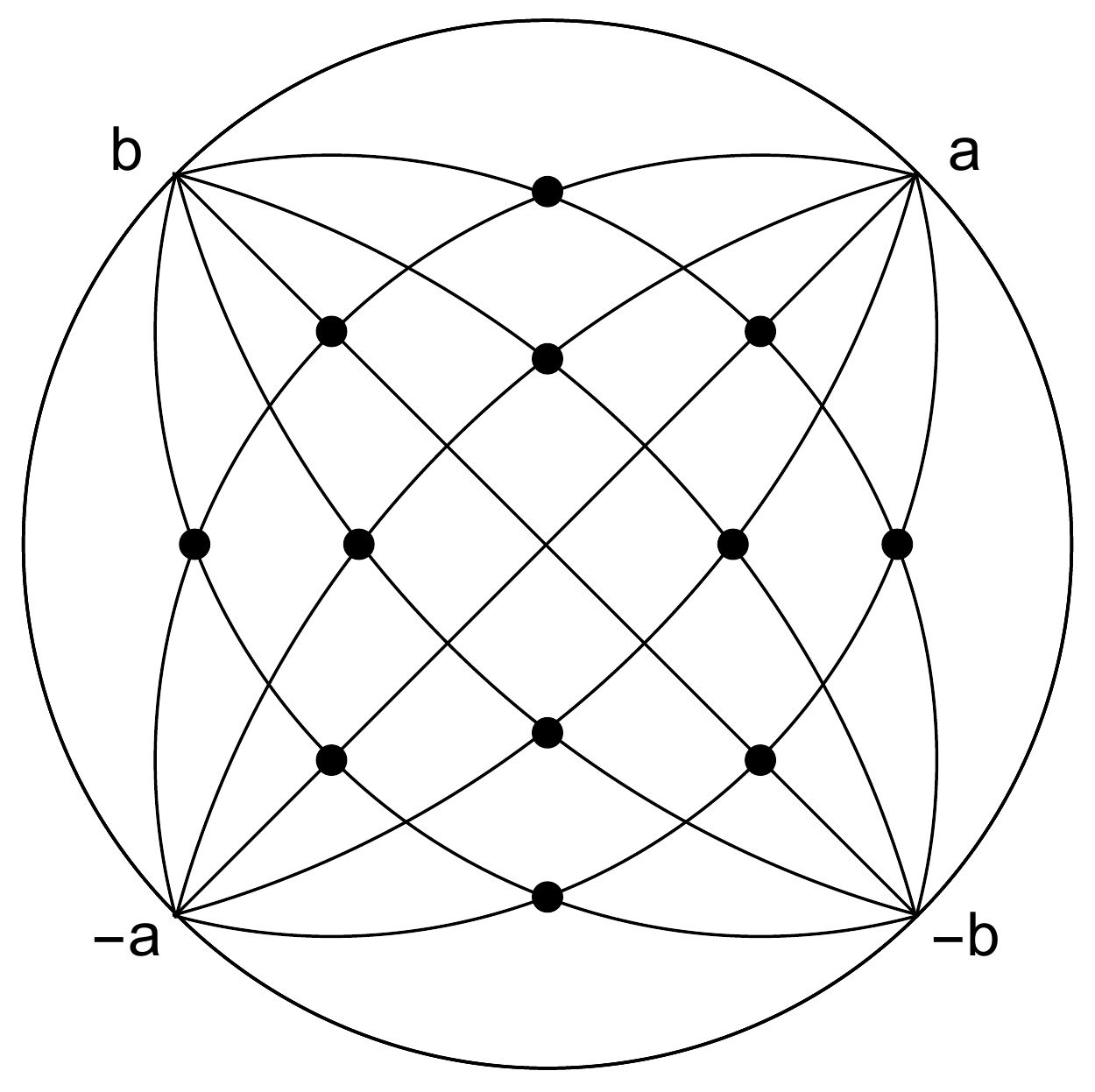}\end{center}
\caption{\label{harmonicquads}Points $u$ in the unit disk for which $\xi,\eta$ have arguments which are multiples of $\pi/n$
(and are not both real). Here $n=3$.}
\end{figure}

These solutions form an interesting set of inscribed quadrilaterals: those which are \emph{harmonic} (that is, are M\"obius images of a square)
and have angles which are multiples of $\pi/n$. Indeed, the complex numbers $1,i\frac{u+a}{u-a},-\frac{(u+a)(u+b)}{(u-a)(u-b)},-i\frac{u+b}{u-b},$
sum to zero and form the sides of such a harmonic quadrilateral.

\section{Limit shapes}

The dimer limit shape theory of Cohn, Kenyon and Propp \cite{CKP} is fairly robust, in the sense that it 
can be extended to more general 
settings where the discrete model in question is described
by a one-dimensional \emph{height function}, for example the $6$-vertex model (although
the strict convexity of the surface tension function, which is important for uniqueness of the limit shape, has not been established for this model).

In the current setting, however, we cannot generally describe a configuration by a height function, 
so we need to extend the limit shape theory to this case.
In the case of zero mass it is possible use Temperley's bijection between
spanning trees and dimers to give the limit shape theory directly (using the height function), 
but this method does not work when there are $k$-pronged singularities for $k$ odd, 
as is the case of Figure \ref{limitshapefig}, for example.
More importantly \emph{no such bijection} is known in the case $M>0$ of the massive laplacian determinant. 
Furthermore there are a number of other
models, for example the Fortuin-Kasteleyn random cluster models, in which we again have a two-parameter family of 
Gibbs measures (defined similarly using infinite parallel components) but again no height function.
The methods discussed here extend the limit shape theory of  \cite{CKP} to apply to these ``banded" models.

\subsection{Measured foliations}

Let $U$ be a simply connected domain in $\C$ with smooth boundary, and $S\subset U$ a finite set which serve as singular points.
A \emph{singular measured foliation} (we say \emph{measured foliation} for short) $\cal F$ on $U$ is an atlas of coordinate
charts for $U$ in $\R^2$ for which the coordinate change maps near a nonsingular point (a point not in $S$)
are diffeomorphisms preserving $dy$ (here $x,y$ are coordinates of $\R^2$). Thus at a nonsingular point $(u,v)\in U$
there is a locally defined notion of ``horizontal coordinate" $y$.
In other language a measured foliation is a decomposition of $U\setminus S$ into disjoint curves, 
where locally the decomposition is a diffeomorphic image of the decomposition of $\R^2$ into horizontal lines, where the diffeomorphism
preserves the spacing between lines.

At a singular point the foliation is locally $k$-pronged, that is, has a singular leaf\footnote{
At the risk of causing confusion with spanning tree terminology we call the components of the foliation \emph{leaves}.} 
consisting of $k\ge3$ rays emanating
from the singularity. Allowing $k$-prong singularities for $k$ odd means that the sign of $``dy"$ changes sign around such a singularity.
A measured foliation can thus be described globally by an unsigned closed $1$-form on $U\setminus S$, 
that is, an equivalence class of (locally defined) $1$-forms under the equivalence $\omega\sim -\omega$.
In local coordinates it is thus an expression $\om = |f(u,v)du+g(u,v)dv|$. Notationally we identify a measured foliation $\cal F$ with its 
unsigned one-form $\om$.

It is convenient to generalize this notion of measured foliation to consider measured foliations supported only on subsets of $U$;
equivalently, we allow $\om$ to not have full support. Our smoothness assumptions guarantee
that the support is the closure of an open set. 

Two measured foliations $\om,|\omega'|$ with the same singular set $S$ are \emph{isotopic} if there is a diffeomorphism of $U$ fixing pointwise 
$\partial U\cup S$, isotopic to the identity fixing the boundary and $S$, and sending $\om$ to $|\omega'|$.

For simplicity we consider in this paper only measured foliations without closed leaves and in which the leaves are transverse to the boundary. (Both these conditions can be relaxed and we get a slightly more general statement of Theorem \ref{limitshapethm} below.)

\subsection{Density of lines}

The Euclidean metric on $U$ allows us to identify cotangent vectors with tangent vectors. 
Given a singular measured foliation $\om=|dy|$ on $U$,
at a nonsingular point $p\in U$ there is a vector $\nabla y$, well defined up to sign,
such that $\nabla y\cdot q = \omega(q)$ for every vector $q$.  This is the \emph{gradient field} of $\om$. 
For a polygon $N\in\R^2$ (symmetric about the origin) we say $\om$ is \emph{$N$-Lipschitz} if $\nabla y\in N$ at every nonsingular point $p\in U$. 

Given a unit tangent vector $q\in T_p(U)$ at a nonsingular point $p\in U$, we define the \emph{density of lines in direction $q$ at $p$}
to be simply $|\omega(q)| = |\nabla y\cdot q|$. 
The condition of being $N$-Lipschitz means that the density of lines of $\om$ is not too great in any direction: small enough that $\om$ 
can be approximated by a grove (see Section \ref{approxsection} below).
The \emph{boundary density} of $\om$ is the density of lines along the boundary, that is, in the direction
of the boundary tangent. The \emph{boundary height function} is a locally defined function whose tangential derivative is the boundary density.

\subsection{Orientation cover}

If $\om$ has any $k$-prong singularities for $k$ odd, there is a two-sheeted branched cover $\tilde U$ of $U$, 
branched exactly over these singularities,
and a lift $\tilde\om$ on $\tilde U$ for which the leaves have a consistent orientation (each singularity of the lifted foliation
is even-pronged).
In this case there is a global coordinate $\tilde y$ (away from the lift of $S$) such that $\tilde\om = |d\tilde y|$.
The $1$-form $d\tilde y$ changes sign under the deck transformation exchanging the sheets of the cover. 
The function $\tilde y$ is called the \emph{global height function}.

\subsection{Approximation}\label{approxsection}

Let $U_\eps\subset \eps\G$ be a connected subgraph of $\eps\G$ approximating $U$ as $\eps\to 0$: we can take $U_\eps = U\cap\eps\G$
with some tweaks near the boundary so that it is connected.

For each singularity $s\in S$ let $s_\eps$ be a face of $U_\eps$ containing $s$.
Let $\partial U_\eps$, the boundary of $U_\eps$, be the set of vertices of $U_\eps$ with a neighbor in $\eps\G$ outside $U_\eps$. 

Recall that a (massive) grove of $U_\eps$ is a spanning forest, every component of which contains either a root (a marked vertex) or contains
at least one boundary vertex
(or both); a component is \emph{special} if it is unrooted.
We consider only groves $F$ in which each special component contains exactly two boundary vertices.  
For each special component $C$ of $F$
there is a unique path between these boundary points, the \emph{trunk} of $C$. 

Given a massive grove $F$ of $U_\eps$ of the above type we associate to it a \emph{discrete measured foliation}
$|dy_\eps|$, which is an unsigned element of the cohomology $H^1(U_\eps,\Z)$, as follows. For a simple path $\gamma$ in the dual graph, 
$$\int_{\gamma}|dy_\eps|=\eps\sum_C |C\wedge\gamma|$$
where the sum is over the set of trunks $C$ of $F$, and
$C\wedge\gamma$ is the algebraic number of crossings of $C$ with $\gamma$. Thus $|dy_\eps|$ is the ``flow" of $F$, 
when we consider the flow to be concentrated on the trunks.
This quantity is invariant under isotopy of $\gamma$ fixing its endpoints, as long as the isotopy does not cross any singularities.

We say that a massive grove $F$ of $U_{\eps}$ is 
\emph{isotopic} to a measured foliation $\om$ of $U$ if the trunk of
each special component of $F$ is isotopic in $U$, fixing the boundary and fixing the singularities, 
to a leaf of $\omega$ whose boundary points are within $O(\eps)$ of those of the corresponding component of $F$.

For a sequence $\eps_j\to0$, let $\{F_j\}_{j=1,2,\dots}$ be a sequence of massive groves of $U_{\eps_j}$.
We say $F_j$ \emph{approximates} 
$\om$ if the corresponding forms $|dy_j| = |dy_{\eps_j}|$ converge weakly to $\om$ in the following sense:
for any fixed smooth path $\gamma$ between two points in $U$ avoiding singularities, 
$$\int_{\gamma} \om = \lim_{j\to\infty}\int_{\gamma}|dy_j|.$$
This says that both the directions and the density of leaves of $F_j$ converge to those of $\om$. 

Note that if $F_j$ approximates $\om$ then almost all (except for a fraction tending to zero of)
its special components are isotopic to leaves of $\om$. 

Finally, if $F$ is a massive grove of $U_\eps$ let $\Omega(F)$ be the set of massive groves $F'$ of $U_\eps$
which connect the same boundary points as $F$ and for which each trunk of $F'$ is isotopic fixing $S$ to the corresponding
trunk of $F$. 

\subsection{Surface tension}

The \emph{surface tension} of a measured foliation $\omega$ is 
\be\label{surfacetension2}\ent(\omega) = \iint_U \sigma(\om)\,du\,dv,\ee
where, writing $\om=|sdv+tdu|,$ 
$\sigma(\om)=\sigma(s,t)=\sigma(-s,-t)$ is minus the free energy of $\mu_{s,t}$, see (\ref{surfacetension1}).

\subsection{Limit shape theorem}

\begin{theorem}\label{limitshapethm}
Let $\G$ be a biperiodic weighted planar graph as above with unit flow polygon $N$.
Fix $M\ge0$. Let $U$ be a piecewise smooth simply connected domain, and $S\subset U$ finite.  Let $\om$ be an $N$-Lipschitz singular measured foliation with singularities $S$, 
with leaves transverse to the boundary and in which every leaf begins and ends on the boundary.
For a sequence $\eps_j>0$ converging to zero as $j\to\infty$, 
let $U_j\subset \eps_j\G$ be a connected subgraph of $\eps_j\G$ approximating $U$.
Let $F_j$ be a massive grove of $U_j$ with $\{F_j\}_{j=1,2,\dots}$ approximating $\omega$.
Let $\mu_j$ be the grove measure on $\Omega(F_j)$.
Then as $j\to\infty$, a $\mu_j$-random grove approximates the measured foliation 
$|\omega_0|$,
where $|\omega_0|$ is the unique measured foliation isotopic to $\om$ and minimizing the 
surface tension $\ent(\xi).$
\end{theorem}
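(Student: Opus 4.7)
The plan is to adapt the variational framework of Cohn--Kenyon--Propp \cite{CKP} to the setting of measured foliations, using the two-parameter family of Gibbs measures $\mu_{s,t}$ from Theorem \ref{planethm} as local models in place of dimer slopes. I would establish (i) a large deviation principle at speed $\eps_j^{-2}$ for the weighted count $Z_j(\xi)$ of groves in $\Omega(F_j)$ whose associated discrete foliation $|dy_{\eps_j}|$ is weakly close to a given $\xi$ isotopic to $\omega$, of the form $\eps_j^2 \log Z_j(\xi) = -\ent(\xi) + c(\omega) + o(1)$; then (ii) uniqueness of the minimizer of $\ent$ from strict convexity of $\sigma$; (iii) existence via lower semicontinuity and compactness; and (iv) the standard exponential concentration consequence.

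For the entropy upper bound in (i), I would partition $U$ into mesoscopic boxes of side length $\delta$ with $\eps_j \ll \delta \ll 1$. On each box $B$ disjoint from $S$, the local slope of a foliation close to $\xi$ is approximately constant, say $(s,t)$, and the logarithm of the weighted number of local grove configurations realizing that slope is bounded by $-\sigma(s,t)|B|/\eps_j^2 + o(\eps_j^{-2})$, using the free energy computation of Theorem \ref{planethm}, which identifies the free energy of $\mu_{s,t}$ with the Legendre dual $\sigma$ of the Ronkin function. Summing over boxes and sending $\delta\to 0$ yields the upper bound; boxes intersecting $S$ contribute an error of order $O(\delta^2\log(1/\delta))$, which is negligible.

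The matching lower bound requires a patching construction, and is where I expect the main difficulty to lie. In each interior box I would produce an $\eps_j$-scale grove realizing the locally constant foliation of slope $(s,t)$ by sampling from a large torus realization of $\mu_{s,t}$; the obstacle is gluing these patches along box interfaces while simultaneously preserving both the boundary connections imposed by $F_j$ and the isotopy class of trunks rel $S$. Since no height function is available in this ``banded'' setting, which is precisely the feature distinguishing this theorem from \cite{CKP}, matching cannot be performed by equating boundary values; instead I would carry out a direct topological surgery, rerouting trunks through thin buffer strips by a bounded number of elementary local moves per unit interface length, which preserves the isotopy class and multiplies the weight by a subexponential factor. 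Near each singularity $s\in S$, where $\mu_{s,t}$ does not directly apply because the $k$-pronged local geometry is not a constant slope, I would excise a small topological disk, treat its boundary data rigidly (the local entropy cost is $O(\log(1/\eps_j))$ and hence negligible), and paste in a fixed local model foliation with a $k$-prong singularity.

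Once the large deviation estimate is in hand, steps (ii)--(iv) are standard. Strict convexity of $\sigma$ on $N$, from Theorem \ref{planethm} and \cite{KOS}, implies strict convexity of $\ent$ on the affine space of measured foliations isotopic to $\omega$, giving uniqueness of the minimizer $\omega_0$; compactness of $N$-Lipschitz foliations with fixed boundary data under the weak topology, combined with lower semicontinuity of $\ent$, gives existence. Concentration of $\mu_j$ on $\omega_0$ follows: for any weak neighborhood $V$ of $\omega_0$, the probability that a $\mu_j$-random grove produces a discrete foliation outside $V$ is at most $\exp(-\eps_j^{-2}(\inf_{\xi\notin V}\ent(\xi) - \ent(\omega_0) - o(1)))$, and the strict convexity gap forces the exponent to be negative and bounded away from zero.
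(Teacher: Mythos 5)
Your overall architecture matches the paper's: compactness of $N$-Lipschitz foliations in a fixed isotopy class, uniqueness of the $\ent$-minimizer via convexity of $\sigma$, identification of the growth rate of $Z(\Omega(F_j))$ with $-\ent$ via a mesoscopic decomposition in the style of \cite{CKP}, and then the standard concentration step. The LDP portion (your steps (i), (iii), (iv)) is the same outline the paper gives, and your observations about patching and singularities are honest flags of difficulties the paper itself defers to \cite{CKP} rather than resolving in detail.

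The genuine gap is in your step (ii). You assert strict convexity of $\ent$ ``on the affine space of measured foliations isotopic to $\omega$,'' but the set of measured foliations in a fixed isotopy class is \emph{not} an affine space. A measured foliation is an \emph{unsigned} $1$-form $|\omega|$; there is no global choice of sign in the presence of odd-pronged singularities, so the naive combination $t\omega_1+(1-t)\omega_2$ is not well-defined. Even locally, one must choose compatible signs, and it is not automatic that a pointwise interpolation stays in the same isotopy class. The paper overcomes this with a triangulation argument that you omit: triangulate $U$ with vertices in $S\cup\partial U$ and edges transverse to the foliation; on each triangle pick a signed representative monotone along the edges, choose the signs for $\omega_1,\omega_2$ compatibly, interpolate linearly there, and observe that the interpolants glue across shared edges into a global unsigned $1$-form with the same edge integrals (hence the same isotopy class). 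Only then does strict convexity of $\sigma$ propagate to strict convexity of $\ent$ along this path, and uniqueness follows. Without some such construction of an interpolating path within the isotopy class, the convexity claim does not hold as stated. Relatedly, your compactness argument in (iii) is stated as if the weak topology on $N$-Lipschitz foliations with fixed boundary data is enough; the paper passes to the orientation double cover $\tilde U$ (branched over the odd-pronged singularities) to obtain a genuine antisymmetric height function $\tilde y$, and compactness and the metric are read off from uniform convergence of these height functions. Incorporating the double cover would also give you the scaffolding needed to make the interpolation argument precise.
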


\begin{proof}
The statement and proof are similar to the proof of the limit shape theorem for domino tilings in \cite{CKP},
with some small differences.

Let $\Omega(\om)$ be the space of $N$-Lipschitz measured foliations isotopic to $\om$.
The proof is based on three facts, which we establish in turn.

\begin{enumerate}
\item $\Omega(\om)$ is a compact metric space.
\item There is a unique minimizer $|\omega_0|\in\Omega(\om)$ to the surface tension $\ent$ of (\ref{surfacetension1}).
\item For a sequence $\{F_j\}_{j=1,2,\dots}$ approximating $\om$, the surface tension $\ent(\om)$ is minus the exponential growth rate of the weighted sum of 
configurations in $\Omega(F_j)$. 
\end{enumerate}

Before we prove each of these, let us show how they complete the proof of the theorem.
By compactness, for fixed $\delta>0$, $\Omega(\om)$ can be covered by a finite number of $\delta$-balls $B_\delta(|\omega_k|)$, with $1\le k\le n_\delta$ for some $n_\delta$. 
By the growth condition, for each $j$ and $k$,
$$\ent(|\omega_k|) = \lim_{j\to\infty} \eps_j^2\log Z(\Omega(F_{k,j}))$$
where $\{F_{k,j}\}_{j=1,2,\dots}$ is a sequence approximating $|\omega_k|$. 
By uniqueness of the minimum, the ball which contains $|\omega_0|$ has larger growth rate than any other ball,
so as $j\to\infty$ the measure concentrates on this ball. Thus with probability tending to $1$ a random element of $\Omega(F_j)$
will lie within $\delta$ of $|\omega_0|$.

Now to prove the above facts. 
The Lipschitz condition guarantees that $\Omega(\om)$ is compact: it is a closed subset of the compact space
of unsigned $N$-Lipschitz one-forms. 
By passing to the orientation cover $\tilde U$ of $U$, every measured foliation in $\Omega(\om)$ has a global height function $\tilde y$ 
which is antisymmetric under deck transformation,
and conversely every antisymmetric global height function $\tilde y$ on $\tilde U$ satisfying the condition that $|d\tilde y|$ is $N$-Lipschitz
defines a measured foliation. The set of such height functions with fixed boundary condition and satisfying a Lipschitz condition
is a metric space (in the uniform metric). This metric restricts to a metric on the closed subspace of height functions arising
from a fixed isotopy class.

Now to prove the uniqueness of the optimizer,  fact 2.
Let $|\omega_1|$ be a foliation; one can triangulate $U$ with triangles whose vertices are in $S$ or on a boundary,
do not contain any singular points in their interior or on their edges (only at their vertices)
and whose edges are transverse to $|dy_1|$. 
On each triangle, there is a local coordinate $y_1$ which is monotone on each edge of the triangle. 
The isotopy class of $|dy_1|$ is determined by the integrals of $|dy_1|$ along the edges of the triangles.

Let $|dy_2|$ be a nearby isotopic foliation; it will also be transverse to the triangle edges, and on each triangle
will also have a local coordinate $y_2$ monotone on the edges of the triangle.
We can choose these coordinates $y_1$ and $y_2$
so that $dy_1$ and $dy_2$ have the same sign on each edge. Then for $t\in[0,1]$, $tdy_1+(1-t)dy_2$ is also nonsingular
and has the same sign on each edge. If two triangles share an edge, the form $|t dy_1+(1-t) dy_2|$ is the same for both
triangles, and so these local forms piece together to give an unsigned 1-form on $U$.  This foliation has the same isotopy class as
$|dy_1|$ and $|dy_2|$, since it has the same integrals on edges of the triangles. This gives a ``linear" interpolation between 
$|dy_1|$ and $|dy_2|$; since $\sigma$ is a convex function, $\ent$ is convex on this interpolation.

Now suppose $\omega_1$ and $\omega_2$ are distinct minimizers of $\ent$. On the above interpolation between them, 
and on any triangle where they differ, convexity of $\ent$ implies that
$\ent$ is smaller at any $t\in(0,1)$ than at $t=0$ or $t=1$, a contradicting minimality of both $\omega_1$ and $\omega_2$. 
So there is a unique surface tension minimizer. 

Finally it remains to show that $\ent(\omega)$ is the growth rate of configurations in $\Omega(F_j)$.
This follows the proof in \cite{CKP} exactly, so we simply sketch the argument. 
Fix $\eps>0$ small and triangulate $U$ into small triangles, with sides of order $\Theta(\sqrt{\eps})$ but with angles bounded below, and so that any singularities
only occur at vertices of the triangles. By the Lipschitz condition on $\om$ 
and Rademacher's Theorem, $\om$ is close to constant (that is, $\om=|sdv+tdu|$ for $s,t$ nearly constant functions) on almost all triangles
(all except for a fraction tending to zero of the triangles). A grove close to $\om$ has the property that on almost all triangles,
it lies close to $\om$. On a triangle $T$ where $\om$ is close to a constant $|sdv+tdu|$,
the contribution to the weight of groves close to $\om$ is $e^{A\sigma_{s,t}/\eps^2(1+o(1))}$ where $A$ is the area of the triangle.
The product of these contributions over all triangles gives (upon taking logs and multiplying by $\eps^2$) the surface tension $\ent(\om)$.
\end{proof}

\section{Open question}

The branch of the UST on $\Z^2$ was famously shown to be described by SLE(2) by Lawler, Schramm and Werner in \cite{LSW}.
For a fixed $M>0$, however, 
if one takes a crossing of a square grid strip graph of width $n$ (as in Figure \ref{mt}, panel 2 for width $4$), as $n\to\infty$ the scaling limit of the crossing branch 
(dividing both coordinates by $n$) will be a straight line.  
One can attempt to make interesting limits by sending $M\to 0$ at the same time as the width $n$ goes to $\infty$.
What are the nontrivial scaling limits for this random curve, which interpolate between a straight line and SLE(2)?


\end{document}